\newtheorem{thm}{Theorem}[]
\newtheorem{lem}{Lemma}[]
\theoremstyle{remark}
\newtheorem{rem}[lem]{Remark}
\theoremstyle{definition}
\numberwithin{equation}{section}
\newenvironment{breakablealgorithm}
{
	\refstepcounter{algorithm}
	\hrule height 0.8pt depth0pt \kern2pt
	\renewcommand{\caption}[2][\relax]{
		{\raggedright\textbf{\ALG@name~\thealgorithm} ##2\par}%
		\ifx\relax##1\relax 
		\addcontentsline{loa}{algorithm}{\protect\numberline{\thealgorithm}##2}%
		\else 
		\addcontentsline{loa}{algorithm}{\protect\numberline{\thealgorithm}##1}%
		\fi
		\kern2pt\hrule  \kern2pt
	}
}{
	\kern2pt\hrule\relax
}
\author{Yutao Ma}
\title[Rare-event analysis of $\beta$-Jacobi ensemble]{\bf Asymptotically efficient estimators for tail probabilities of extremals of $\beta$-Jacobi ensembles}  
\date{}
\address{Yutao MA\\ School of Mathematical Sciences $\&$ Laboratory  of Mathematics and Complex Systems of Ministry of Education, Beijing Normal University, 100875 Beijing, China.} 
\thanks{The research of Yutao Ma was supported in part by NSFC 12171038, 11871008 and 985 Projects.}
\email{mayt@bnu.edu.cn}
\author{Siyu Wang*}
\address{Siyu Wang\\ School of Mathematical Sciences $\&$ Laboratory of Mathematics and Complex Systems of Ministry of Education, Beijing Normal University, 100875 Beijing, China.}
\email{wang\_siyu@mail.bnu.edu.cn}
\begin{document}

\maketitle
\begin{abstract}
In this paper, we consider the tail probabilities of extremals of $\beta$-Jacobi ensemble which plays an important role in multivariate analysis.  The key steps in constructing estimators rely on the rate functions of large deviations. Therefore, under specific conditions, we consider stretching and shifting transformations applied to the $\beta$-Jacobi ensemble to ensure that its extremals satisfy the large deviations. The estimator we construct characterize the large deviation behavior and moderate deviation behavior of extremals under different assumptions.
\end{abstract} 

{\bf Keywords:} importance sampling; $\beta$-Jacobi ensemble; deviation probability; asymptotically efficient

{\bf AMS Classification Subjects 2020:} 60F10, 	60B20

\section{Introduction}
Estimating the probability of rare events or fluctuations in random systems is a crucial challenge encountered in various applied fields, such as engineering \cite{Shwartz}, where a rare event might signify a design failure, or chemistry, where transitions between chemical species or polymer states arise from rare events in a free energy landscape \cite{Wales}.

Large deviation theory serves as a valuable tool for quantifying the probabilities of rare events. However, its practical applications encounter challenges, as approximations based on large deviation results may lack direct applicability, leading to insufficient accuracy in estimating the probabilities of rare events.

Importance sampling emerges as one of the most popular approaches in rare event analysis. While traditional Monte Carlo methods often require substantial time to simulate rare events, importance sampling techniques effectively reduce the simulation time, enhancing overall efficiency. Essentially, importance sampling, a modified Monte Carlo sampling, produces estimates with a smaller coefficient of variation-a natural performance metric for rare event estimation, easily computed through equation \eqref{errorvar} below. Consequently, constructing effective importance sampling estimators for rare events becomes a pivotal task.

\subsection{Basic importance sampling}
Consider a random variable $Y$ with probability distribution $Q$ taking values on spaces $\Omega$, and a real-valued function $\omega$ on $\Omega$. 
Importance sampling (IS) is based on the basic identity, for fixed $B \in \mathcal B,$
$$
	 \mathbb E\left[\mathbf 1_{\left\{\omega\left(Y\right) \in B \right\}}\right]
	 = \mathbb E^R\left[\mathbf 1_{\left\{\omega\left(Y\right) \in B \right\}} \frac{\mathrm{d} Q}{\mathrm{d} R}\right],  
$$
where $R$ is a probability measure such that the Radon-Nikodym derivative $\mathrm{d} Q / \mathrm{d} R$ is well defined on the set $\left\{\omega\left(Y\right) \in B \right\}$. 
Here, $\mathcal B$ denotes the Borel $\sigma$-algebra on $\mathbb R$, and $\mathbb E$ or $\mathbb E^R$ denotes the expectations under the measures $Q$ or $R$, respectively. Then, the random variable $F$ is given by 
\begin{align}\label{deff} 
	F=\frac{\mathrm{d} Q}{\mathrm{d} R} \mathbf 1_{\left\{\omega\left(Y\right) \in B\right\}} 
\end{align} 
is an unbiased estimator of $\mathbb P\left( \omega\left(Y\right) \in B \right)$ under the measure $R$. An averaged importance sampling estimator based on the measure $Q$ is obtained by simulating $N$ i.i.d. copies $F^{(1)}, \ldots, F^{(N)}$ of $F$ under $R$ and computing the empirical average $\widehat{F}_N=\left(F^{(1)}+\cdots+F^{(N)}\right) / N$. 
Moreover, its variance is
$$
\mathrm{Var}^R\left( \widehat{F}_N \right) = \frac{\mathbb E^R \left[F^2\right] - \mathbb P\left(\omega\left(Y\right) \in B \right)^2 }{N},
$$
which decreases as $N$ increases. The coefficient of variation is given by
\begin{align}\label{errorvar}
	 \mathrm{C. O. V.} \left(\widehat{F}_N\right) :=	\frac{\sqrt{\mathrm{Var}^R\left( \widehat{F}_N \right)}}{\mathbb E^R\left( \widehat{F}_N \right) }  = \frac{1}{\sqrt{N}} \left(\frac{\mathbb E^R \left[F^2\right]}{\mathbb P\left(\omega\left(Y\right) \in B \right)^2}-1\right)^{1/2}.
\end{align}  
A central issue is to optimize the left-hand side of  \eqref{errorvar} via selecting $R.$  
If we take $R$ as $$R^*(\cdot):=\mathbb P\left(\cdot \mid \omega\left(Y\right) \in B\right),$$ it is easy to verify that $${\rm Var}^{R^*}(\widehat{F}_N)=0.$$ Unfortunately, the optimal IS density $R^*$ is computationally unavailable since we would need to know the value of $\mathbb P\left( \omega\left(Y\right) \in B \right).$

Given a family of sets $(B_n; n\geq 1)$ such that $$0<\mathbb P\left( \omega\left(Y\right) \in B_n \right) \text{ and } \lim_{n\to\infty}\mathbb{P}\left( \omega\left(Y\right) \in B_n \right)=0,$$ it is necessary to consider the quotient $$\frac{\mathbb{E}^R \left[F_n^2\right]}{\mathbb P\left(\omega\left(Y\right) \in B \right)^2}$$ when constructing a sequence of estimators $(F_n; n\geq 1)$ for $\mathbb P\left( \omega\left(Y\right) \in B_n \right)$.  In the context of rare-event simulations (e.g., Asmussen and Glynn, \cite{Asmussen}), they say that $F_n$ is strongly efficient if
\begin{align*}
		\varlimsup _{n \rightarrow \infty} \frac{\mathbb E^R \left[F_n^2\right]}{\mathbb P\left(\omega\left(Y\right) \in B_n\right)^2}<\infty .
\end{align*}  

Another efficiency concept slightly weaker than strongly efficient is asymptotically (or logarithmically) efficient: $\mathbb E^R \left[F_n^2\right]$ tends to $0$ so quickly that
$$
	\varlimsup _{n \rightarrow \infty} 
	\frac{\mathbb E^R \left[F_n^2\right]}{\mathbb P\left(\omega\left(Y\right) \in B_n\right)^{2- \varepsilon}}
	 = 0
$$
for all $\varepsilon>0$, or, equivalently, that
$$
	\lim_{n \rightarrow \infty} \frac{\left| \log \mathbb E^R \left[F_n^2\right]\right|}{2 \left|\log \mathbb P\left(\omega\left(Y\right) \in B_n \right)\right|} = 1.
$$
See Asmussen and Glynn \cite{Asmussen}, Bucklew \cite{Bucklew} or Juneja and Shahabuddin \cite{Juneja} for discussions on efficiency in rare-event simulations.

\subsection{Motivations}
Asmussen and Kroese (\cite{Asmussen2006}) addressed the problem of tail probability estimation for independent and identically distributed heavy-tailed random variables, developing importance sampling estimators through a two-step construction.  Following a similar structural approach, Jiang {\it et al.} (\cite{JiangRare}) delved into the asymptotical behavior of top eigenvalues for the $\beta$-Laguerre ensemble, whose density is proportional to
$$
\prod_{1 \leq i<j \leq n}\left|x_i-x_j\right|^\beta \cdot \prod_{i=1}^n x_i^{\frac{\beta(p_1-n+1)}{2}-1} \cdot e^{-\frac{1}{2} \sum_{i=1}^n x_i}
$$
with $\beta>0$ and $p_1 \geq n$.  Let $\left(\mu_1, \mu_2, \cdots, \mu_n\right)$ be the $\beta$-Laguerre ensemble. 
		Given $\beta>0$ and assuming $n/p_1 \to \gamma \in (0, 1]$, Jiang {\it et al.} (\cite{JiangRare}) have developed asymptotically efficient estimator for the tail probability $$
	\mathbb P\left( \max _{1 \leq i \leq n} \mu_i > p_1 x \right), \quad x > \beta \left(1+\sqrt{\gamma}\right)^2.
	$$  
	For the case when $\gamma = 0$, they constructed an asymptotically effective estimator by employing the "three-step peeling" technique for precise estimation of the tail probability.
Under more stringent conditions $n^5 / p_1^3 = o(1)$ and $n^5 / p_1^3 = O(1)$, the tail probability has received even more accurate estimations, leading to a strongly efficient estimator.

Motivated by \cite{JiangRare}, the main goal of this paper is to construct asymptotically effective estimators for the tail probabilities of the $\beta$-Jacobi ensemble.  
The study of the $\beta$-Jacobi ensemble finds applications in various domains. In statistics, this ensemble emerges in the context of multivariate analysis of variance, particularly from a pair of independent Gaussian matrices $N_1, N_2$, see \cite{Muirhead}.
Moreover, in Statistical Mechanics, the Jacobi ensemble serves as a model for a log-gas confined to the interval $[0,1]$, see \cite{Dyson62}.  

Studying the tail probabilities of the $\beta$-Jacobi ensemble contributes to a deeper understanding of system stability, phase transitions, and critical phenomena. 
The construction of asymptotically efficient estimators makes it easier to compute tail probabilities for extreme values in complex systems, thereby enhancing both the feasibility and accuracy of numerical methods.
In the context of large samples, asymptotically effective estimators typically have lower computational costs compared to strongly efficient ones. Therefore, in situations where computational resources are limited, asymptotically efficient estimators are appropriate options.

A $\beta$-Jacobi ensemble $\mathcal J_n(p_1, p_2) $ is a set of random variables $\lambda:=\left(\lambda_1, \lambda_2, \cdots, \lambda_n\right) \in[0,1]^n$ with joint probability density function
\begin{align}\label{defjacobi} f_n^{p_1, p_2}\left(x_1, \cdots, x_n\right)=C_n^{p_1, p_2} \prod_{1 \leq i<j \leq n}\left|x_i-x_j\right|^\beta \prod_{i=1}^n x_i^{r_{1, n}-1}\left(1-x_i\right)^{r_{2, n}-1},
 \end{align}
where $p_1, p_2 \geq n, r_{i, n}:=\frac{\beta\left(p_i-n+1\right)}{2}$ for $i=1,2$ and $\beta>0$, and the normalizing constant $C_n^{p_1, p_2}$ is given by
$$
C_n^{p_1, p_2}=\prod_{j=1}^n \frac{\Gamma\left(1+\frac{\beta}{2}\right) \Gamma\left(\frac{\beta(p-n+j}{2}\right)}{\Gamma\left(1+\frac{\beta j}{2}\right) \Gamma\left(\frac{\beta\left(p_1-n+j\right)}{2}\right) \Gamma\left(\frac{\beta\left(p_2-n+j\right)}{2}\right)}
$$
with $p=p_1+p_2$ and $\Gamma(a)=\int_0^{\infty} x^{a-1} e^{-x} dx$ with $a>0.$


Jiang {\it et al.} in \cite{JiangRare}  used the tridiagonal random matrix characterization of the $\beta$-Laguerre ensemble and the rate function of the large deviation of $\max _{1 \leq i \leq n} \mu_i /n$ to construct an asymptotically efficient estimator for the tail probability $\mathbb P\left( \max _{1 \leq i \leq n} \mu_i > p_1 x \right).$ As a parallel model of $\beta$-Laguerre ensemble, Killip and Nenciu (\cite{Killip}) constructed a tridiagonal random matrix for the $\beta$-Jacobi ensemble, which makes it possible for us to follow the method in \cite{JiangRare} to construct an asymptotically efficient estimator for tail probabilities related to top eigenvalues once the corresponding large deviations are established. 

We introduce first the common conditions utilized in this paper:
	$$
	{\bf A} : \quad \lim _{n \rightarrow \infty} \frac{p_1}{p_2}=\sigma \geq 0, \quad \lim _{n \rightarrow \infty} \frac{n}{p_1}=\gamma \in\left[0,1 \wedge \frac{1}{\sigma}\right), \text { and } \quad \lim _{n \rightarrow \infty} \frac{\log n}{\beta n}=0.
	$$
	
		Let $\left(\lambda_1, \lambda_2, \cdots, \lambda_n\right)$ be the $\beta$-Jacobi ensemble $\mathcal J_n(p_1, p_2)$ with joint density function \eqref{defjacobi}.  Let  $\lambda_{(1)}< \cdots <\lambda_{(n)}$ be the order statistics of  $\lambda_1, \cdots, \lambda_n$.
In cases where $\gamma \sigma = 0$, the top eigenvalue $\lambda_{(n)}$ does not satisfy the  large deviation. In such scenarios, to estimate the tail probability $\mathbb P \left( \lambda_{(n)} > x \right)$, we need to consider the following random variables
$$
X_i := \frac{\left(p_1+p_2\right) \lambda_{i}-p_1}{\sqrt{ n p_1}}
$$
for $1 \leq i \leq n,$ which were introduced in \cite{Ma} for the large deviation of the empirical measures.   

Precisely, in \cite{Ma}, the first author established the large deviation for $\frac1n\sum_{i=1}^n \delta_{X_i}$ under the condition {\bf A} and 
 then proved that
\begin{align}
	X_{(n)} & \longrightarrow \frac{(1-\sigma) \sqrt{\gamma}+2 \sqrt{1+\sigma-\sigma \gamma}}{1+\sigma}=:\widetilde{u}_2, \label{defu2} \\
	X_{(1)} & \longrightarrow \frac{(1-\sigma) \sqrt{\gamma}-2 \sqrt{1+\sigma-\sigma \gamma}}{1+\sigma} =:\widetilde{u}_1, \label{defu1}
\end{align}
almost surely as $n\to\infty.$   
From \eqref{defu2} and \eqref{defu1}, it is evident that  
\begin{align*}
	&\mathbb{P} \left(X_{(1)} < x \right) \longrightarrow 0, \;\forall \, x \in \left(-\frac{1}{\sqrt{\gamma}}, \;\widetilde u_1\right), \;
	\mathbb{P} \left(X_{(n)} > x \right) \longrightarrow 0, \;\forall\, x \in \left(\widetilde u_2, \;\frac{1}{\sqrt{\gamma}\sigma}\right).
\end{align*}  
Note that if $\gamma=0$ or $\gamma \sigma=0$ we will substitute $\frac{1}{\sqrt{\gamma}}$ or  $\frac{1}{\sqrt{\gamma} \sigma}$ with $+\infty,$ respectively.

As the construction of asymptotically efficient estimators critically depends on the rate functions of the corresponding large deviations,  we start with the large deviations for $X_{(1)}$ and $X_{(n)}$ and then 
we verify the asymptotic efficiency of the corresponding importance sampling estimator.

\subsection{Large deviations of scaled extremals}
For the statement of the large deviations, we first introduce three well-known distributions in random matrix theory, which appeared as the weak limits of empirical measures of eigenvalues of some random matrices and were involved in the rate functions. Here they are: the semicircular law, the Marchenko-Pastur law, and the Wachter law. 

The density function of the semicircular law $c_\alpha$ for $\alpha>0$ is given by
$$
	c_\alpha(x)=\frac{1}{\pi \alpha} \sqrt{2 \alpha-x^2} \mathbf 1_{\left\{|x| \leq \sqrt{2 \alpha}\right\}}.
$$
The Marchenko-Pastur law $\mu_\gamma$ for $\gamma>0$ is a distribution whose density function $h_\gamma$ is given by
$$
	h_\gamma(x)=\frac{1}{2 \pi \gamma x} \sqrt{\left(x-\gamma_1\right)\left(\gamma_2-x\right)} \mathbf 1_{\left\{\gamma_1 \leq x \leq \gamma_2\right\}}
$$
with $\gamma_1=(\sqrt{\gamma}-1)^2$ and $\gamma_2=(\sqrt{\gamma}+1)^2$. 
The Wachter distribution $\nu_{\gamma, \sigma}$ for $\gamma>0, \sigma>0$ is much more complicated, whose density function is given by
$$
h_{\gamma, \sigma}(x)=\frac{1+\sigma}{2 \pi \sigma \gamma} \frac{\sqrt{\left(x-u_1\right)\left(u_2-x\right)}}{x(1-x)} \mathbf 1_{\left\{u_1 \leq x \leq  u_2\right\}},
$$
where $u_1=\frac{\sigma}{1+\sigma}\left(\sqrt{1-\frac{\sigma \gamma}{1+\sigma}}-\sqrt{\frac{\gamma}{1+\sigma}}\right)^2$ and $u_2=\frac{\sigma}{1+\sigma}\left(\sqrt{1-\frac{\sigma \gamma}{1+\sigma}}+\sqrt{\frac{\gamma}{1+\sigma}}\right)^2$.

As a consequence of large deviation of the empirical measure $\mu_n:=\frac1n\sum_{i=1}^n\delta_{X_i},$  the first author in \cite{Ma} showed that $\mu_n$ converges weakly to $\widetilde{\nu}_{\gamma, \sigma}$ with probability one, whose density function is 
\begin{align}\label{dennu}
	\widetilde{h}_{\gamma, \sigma}(x) 
	= 
	\begin{cases}
		\frac{\sigma \sqrt{\gamma}}{1+\sigma} h_{\gamma, \sigma}\left(\frac{\sigma}{1+\sigma}(1+\sqrt{\gamma} x)\right), & 0<\sigma \gamma < 1; \\ 
		\sqrt{\gamma} h_\gamma(1+\sqrt{\gamma} x), & \sigma=0,0<\gamma \leq 1; \\ 
		\sqrt{1+\sigma}c_2( \sqrt{1+\sigma} x ), & \gamma=0, \sigma \geq 0.
	\end{cases}	
\end{align}

Large sample properties of the extremal eigenvalues have been extensively studied in the literature, most of which focus on the asymptotic distribution and its large deviation principle.
In \cite{Huang} and \cite{Jiang}, they derive the asymptotic distributions and the limits of extremal eigenvalues when $\beta > 0$ is fixed.  

 Under the assumption {\bf A}, Lei and Ma (\cite{MaLD}) have already shown the large deviations for $ p \lambda_{(n)}/p_1 $. 
 It is noteworthy that when $n=O(p_1),$ $(X_i)_{1\le i\le n}$ and $\left(p \lambda_{i}/p_1\right)_{1\le i\le n}$ behave similarly while they are completely different when $n=o(p_1).$  We will further study the large deviation of $X_{(1)}$ and $X_{(n)}$ as follows.

\begin{thm}\label{maxldp}
Let $\left(\lambda_1, \lambda_2, \cdots, \lambda_n\right)$ be the $\beta$-Jacobi ensemble $\mathcal J_n(p_1, p_2)$ with density function \eqref{defjacobi}. Under the assumption {\bf A}, the sequence $\frac{\left(p_1+p_2\right) \lambda_{(n)}-p_1}{\sqrt{np_1 }}$ satisfies a large deviation principle with speed $\beta n$ and good rate function $J_{\gamma, \sigma}$ given by
$$
J_{\gamma, \sigma}(x)  = 
	\begin{cases} 
		- z_{\gamma, \sigma} - \int \log|x-y|  \widetilde{\nu}_{\gamma, \sigma}(\mathrm{d} y) - \varphi_{\gamma, \sigma}(x), & x \in\left[\widetilde{u}_2, \frac{1}{\sqrt{\gamma} \sigma}\right) ; \\ 
		+\infty, & other.
	\end{cases}
$$
Moreover, $\widetilde u_2$ is the unique minimizer of $ J_{\gamma, \sigma}$ and $J_{\gamma, \sigma}(\widetilde u_2)=0.$ \end{thm}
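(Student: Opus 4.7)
The plan is to reduce the statement to an asymptotic analysis of the marginal density of $X_{(n)}$ and then combine it with the large deviation principle for the empirical measure $\mu_n=\frac{1}{n}\sum_{i=1}^n\delta_{X_i}$ already established in \cite{Ma}. This mirrors the route taken by Jiang \emph{et al.} \cite{JiangRare} for the $\beta$-Laguerre case.

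Under the change of variables $\lambda_i\mapsto X_i=\frac{(p_1+p_2)\lambda_i-p_1}{\sqrt{np_1}}$, the density \eqref{defjacobi} transforms into a $\beta$-ensemble of the form $\widetilde f_n=\widetilde C_n\prod_{i<j}|x_i-x_j|^{\beta}\prod_i w_n(x_i)$ with effective weight
$$
w_n(x)=\bigl(1+\sqrt{n/p_1}\,x\bigr)^{r_{1,n}-1}\bigl(1-(\sqrt{np_1}/p_2)\,x\bigr)^{r_{2,n}-1},
$$
supported on $[-\sqrt{p_1/n},\,p_2/\sqrt{np_1}]^n$. A Taylor expansion under assumption {\bf A} yields $(\beta n)^{-1}\log w_n\to\varphi_{\gamma,\sigma}$ uniformly on compact subsets of $(-1/\sqrt\gamma,\,1/(\sqrt\gamma\sigma))$, and Stirling's formula applied to $\widetilde C_n/\widetilde C_{n-1}$ extracts the normalising constant $z_{\gamma,\sigma}$. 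Writing the marginal density of $X_{(n)}$ at $t$ as (proportional to)
$$
g_n(t)\,\propto\,w_n(t)\,\mathbb{E}_{n-1}\Bigl[\exp\bigl(\beta\sum_{i=1}^{n-1}\log|t-Y_i|\bigr)\,\mathbf{1}_{\{\max_i Y_i\le t\}}\Bigr],
$$
where $(Y_1,\dots,Y_{n-1})$ is the $\beta$-Jacobi ensemble of size $n-1$ with the same limiting parameters, and noting that for $t>\widetilde u_2$ the indicator event has probability tending to $1$ and $y\mapsto\log|t-y|$ is bounded and continuous on a neighbourhood of $\mathrm{supp}\,\widetilde\nu_{\gamma,\sigma}$, Varadhan's lemma applied to the LDP of \cite{Ma} gives the pointwise limit
$$
\frac{1}{\beta n}\log g_n(t)\longrightarrow -J_{\gamma,\sigma}(t),\qquad t\in\bigl(\widetilde u_2,\,1/(\sqrt\gamma\sigma)\bigr).
$$

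The LDP then follows by standard integration: the upper bound $\mathbb{P}(X_{(n)}\ge x)\le\int_x^{p_2/\sqrt{np_1}}g_n(t)\,\mathrm{d}t$ combined with monotonicity of $J_{\gamma,\sigma}$ on $[\widetilde u_2,\,1/(\sqrt\gamma\sigma))$ gives $\limsup(\beta n)^{-1}\log\mathbb{P}(X_{(n)}\ge x)\le-J_{\gamma,\sigma}(x)$, while the lower bound $\mathbb{P}(X_{(n)}\ge x)\ge\int_x^{x+\delta}g_n(t)\,\mathrm{d}t$, combined with uniform convergence and continuity of $J_{\gamma,\sigma}$, yields the matching bound as $\delta\downarrow 0$. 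That $\widetilde u_2$ is the unique zero of $J_{\gamma,\sigma}$ is forced by the Euler--Lagrange characterisation of $\widetilde\nu_{\gamma,\sigma}$ (from \cite{Ma}) as the equilibrium measure associated with the effective potential $\varphi_{\gamma,\sigma}$: this gives $\varphi_{\gamma,\sigma}(t)+\int\log|t-y|\,\widetilde\nu_{\gamma,\sigma}(\mathrm{d}y)+z_{\gamma,\sigma}\le 0$ off the support with equality on it, so $J_{\gamma,\sigma}(\widetilde u_2)=0$ and $J_{\gamma,\sigma}(t)>0$ for $t>\widetilde u_2$.

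The main obstacle I expect is promoting the pointwise convergence of $(\beta n)^{-1}\log g_n(t)$ into a uniform estimate on compact subsets of $(\widetilde u_2,\,1/(\sqrt\gamma\sigma))$; Varadhan's lemma supplies the leading order, but exponential-tail bounds on the empirical measure near the right edge of $\widetilde\nu_{\gamma,\sigma}$ are also needed to control the $\log|t-y|$ singularity as $t\downarrow\widetilde u_2$. A secondary difficulty is the unified treatment of the boundary regimes $\sigma=0$ and $\gamma=0$: the support of $X_i$ becomes unbounded, the weight $w_n$ concentrates (as a Gaussian in the case $\gamma=0$), and one must verify case by case that the Taylor expansion of $\log w_n$ and the formulas for $\widetilde h_{\gamma,\sigma}$ given in \eqref{dennu} fit together consistently to give the claimed rate function.
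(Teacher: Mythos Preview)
Your architecture---factorise the density so that $X_{(n)}$ sits on top of an $(n-1)$-point ensemble, then use the empirical-measure LDP from \cite{Ma} to replace $\sum_i\log|t-X_{(i)}|$ by $(n-1)\int\log|t-y|\,\widetilde\nu_{\gamma,\sigma}(\mathrm{d}y)$---is exactly the one the paper follows. But the step ``Varadhan's lemma applied to the LDP of \cite{Ma}'' is a genuine gap: the empirical-measure LDP in \cite{Ma} has speed $\beta n^2$, whereas the exponent $\beta(n-1)\int\log|t-y|\,L_{n-1}(\mathrm{d}y)$ and the target speed are both of order $\beta n$. Varadhan at speed $\beta n^2$ yields a trivial limit, and there is no off-the-shelf Varadhan statement at speed $\beta n$ here. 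What is really needed (and what the paper does) is a two-scale argument: the event $\{L_{n-1}\notin B_{L,M}(\delta)\}$ has probability $e^{-c\beta n^2}$ and is therefore negligible at scale $\beta n$, while on the complementary event one controls $\int\log|t-y|(L_{n-1}-\widetilde\nu_{\gamma,\sigma})(\mathrm{d}y)$ uniformly in $t\in[x,M]$ by a Lipschitz/Wasserstein estimate away from the singularity. That last uniformity is not automatic from weak convergence and has to be proved by hand (the paper's inequality \eqref{sslog}).

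Two further ingredients are missing from the proposal and are handled separately in the paper: (i) exponential tightness, i.e.\ $\lim_{M\uparrow 1/(\sqrt\gamma\sigma)}\varlimsup_n(\beta n)^{-1}\log\mathbb P(X_{(n)}>M)=-\infty$, without which the upper bound for closed sets fails---this requires a careful estimate of $\int_M^{s_{2,n}^{-1}}t^{\ell_n}u_n(t)\,\mathrm{d}t$ near the hard edge (Lemma~\ref{I01}), not just pointwise density asymptotics; and (ii) the super-exponential decay of $\mathbb P(X_{(n)}\le x)$ for $x<\widetilde u_2$, which gives $J_{\gamma,\sigma}=+\infty$ there and is obtained from the empirical-measure LDP by a support argument. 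Your acknowledged ``main obstacle'' is precisely these points; they are not technicalities that can be deferred, but separate verifications that constitute roughly half of the proof.
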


\begin{thm}\label{minldp}
	Let $\left(\lambda_1, \lambda_2, \cdots, \lambda_n\right)$ be the $\beta$-Jacobi ensemble  $\mathcal J_n(p_1, p_2)$  with density function \eqref{defjacobi}. Under the assumption {\bf A}, the sequence $\frac{\left(p_1+p_2\right) \lambda_{(1)}-p_1}{\sqrt{np_1 }}$ satisfies a large deviation principle with speed $\beta n$ and good rate function $I_{\gamma, \sigma}$ given by
$$
I_{\gamma, \sigma}(x) = \begin{cases} - z_{\gamma, \sigma} - \int \log|x-y|  \widetilde{\nu}_{\gamma, \sigma}(\mathrm{d} y) - \varphi_{\gamma, \sigma}(x), & x \in\left(-\frac{1}{\sqrt{\gamma}} , \widetilde{u}_1 \right] ; \\ +\infty, & other.
\end{cases}
$$
Moreover, $ I_{\gamma, \sigma}$ achieves its minimum value $0$ at the unique point  $\widetilde u_1.$ 
\end{thm}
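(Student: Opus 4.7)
The strategy is to follow the same template as Theorem \ref{maxldp}, replacing ``top'' with ``bottom.'' First I would rewrite the joint density \eqref{defjacobi} under the affine change of variables $X_i=\tfrac{(p_1+p_2)\lambda_i-p_1}{\sqrt{np_1}}$ so that it becomes a log-gas on the interval $\bigl(-\tfrac{1}{\sqrt{\gamma}},\tfrac{1}{\sqrt{\gamma}\sigma}\bigr)$ (with the usual $+\infty$ convention when $\gamma\sigma=0$), with a Vandermonde factor and a single-particle weight whose $(1/(\beta n))$-scaled logarithm converges to the external potential $\varphi_{\gamma,\sigma}$ that already appears in the rate function of Theorem \ref{maxldp}. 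This is exactly the log-gas representation used in \cite{Ma} to establish the LDP for the empirical measure $\mu_n=\tfrac{1}{n}\sum_i\delta_{X_i}$ with equilibrium $\widetilde\nu_{\gamma,\sigma}$.

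Next, I would estimate $\mathbb P(X_{(1)}\le x)$ for $x\in(-1/\sqrt{\gamma},\widetilde u_1)$ by a standard ``peeling-one-particle'' argument. By exchangeability,
\begin{equation*}
\mathbb P(X_{(1)}\le x)\ \le\ n\,\mathbb P\bigl(X_1\le x\bigr),
\end{equation*}
and the single-particle probability is written as an integral over $X_1\le x$ of the conditional density of $X_1$ given $(X_2,\dots,X_n)$, which contains the interaction factor $\exp\bigl\{\beta\sum_{j\ge 2}\log|X_1-X_j|\bigr\}$ and the potential factor $\exp\{\beta n\,\varphi_{\gamma,\sigma}(X_1)+o(\beta n)\}$. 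Applying Varadhan's lemma to the empirical measure $\tfrac{1}{n-1}\sum_{j\ge 2}\delta_{X_j}$ (which obeys the LDP of \cite{Ma} with equilibrium $\widetilde\nu_{\gamma,\sigma}$), the interaction converts to $\exp\{\beta n\int\log|x-y|\,\widetilde\nu_{\gamma,\sigma}(dy)+o(\beta n)\}$, while the normalisation contributes the constant $z_{\gamma,\sigma}$. This gives the upper bound $\limsup\tfrac{1}{\beta n}\log\mathbb P(X_{(1)}\le x)\le -I_{\gamma,\sigma}(x)$. The matching lower bound is obtained by restricting the remaining $n-1$ particles to a weak neighbourhood of $\widetilde\nu_{\gamma,\sigma}$ and $X_1$ to a small interval around $x$, then using the LDP lower bound for the empirical measure.

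The rate-function identity $I_{\gamma,\sigma}(\widetilde u_1)=0$ and the fact that $\widetilde u_1$ is the unique minimiser follow from the Euler--Lagrange (equilibrium) condition characterising $\widetilde\nu_{\gamma,\sigma}$: on $[\widetilde u_1,\widetilde u_2]$ one has $\int\log|x-y|\,\widetilde\nu_{\gamma,\sigma}(dy)+\varphi_{\gamma,\sigma}(x)=-z_{\gamma,\sigma}$, and a direct derivative computation (as performed for $J_{\gamma,\sigma}$ in the proof of Theorem \ref{maxldp}) shows that the effective potential is strictly increasing as $x$ decreases below $\widetilde u_1$, whence $I_{\gamma,\sigma}(x)>0$ strictly for $x<\widetilde u_1$.

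The main obstacle is the regime $\sigma\gamma=0$, where the support of $\widetilde\nu_{\gamma,\sigma}$ degenerates to a semicircle or a Marchenko--Pastur law (see \eqref{dennu}) and the admissible interval $(-1/\sqrt{\gamma},\widetilde u_1]$ becomes a half-line; here one has to control the tails of the potential and verify finiteness and continuity of $\int\log|x-y|\,\widetilde\nu_{\gamma,\sigma}(dy)$ throughout this half-line. A secondary technical point is the uniform control on $\varphi_{\gamma,\sigma}$ near the edge $x=-1/\sqrt{\gamma}$, where the Jacobi weight becomes singular; these are the same two issues already handled in the proof of Theorem \ref{maxldp}, and they can be treated identically. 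As a consistency check, the symmetry $\lambda_i\leftrightarrow 1-\lambda_i$ (which swaps $p_1,p_2$ and interchanges $X_{(1)}$ with a rescaled $-X_{(n)}$ in the parameter-swapped ensemble) converts Theorem \ref{maxldp} for $(\gamma\sigma,1/\sigma)$ into the present statement, confirming both the interval $(-1/\sqrt{\gamma},\widetilde u_1]$ and the location of the minimiser $\widetilde u_1$.
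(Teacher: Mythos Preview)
Your plan is essentially the paper's own argument: peel off one extremal particle, exploit the explicit log-gas density, and use the speed-$\beta n^2$ LDP for the empirical measure of the remaining $n-1$ particles to replace the interaction $\sum_{j}\log|x-X_j|$ by $(n-1)\int\log|x-y|\,\widetilde\nu_{\gamma,\sigma}(dy)$; the lower bound, the exponential tightness near $-1/\sqrt\gamma$, and the super-exponential decay on the wrong side $x>\widetilde u_1$ are handled exactly as in Theorem~\ref{maxldp}. Two minor points of divergence: (i) the paper does not invoke Varadhan's lemma---since $y\mapsto\log|x-y|$ is unbounded and the empirical-measure LDP runs at speed $\beta n^2$ rather than $\beta n$, the paper instead splits according to $\{L_{n-1}\in B_{L,M}(\delta)\}$ versus its complement and controls the log-integral directly via the Wasserstein estimate \eqref{sslog}; (ii) the paper uses the factorisation \eqref{orderpdf1} into the genuine $(n-1)$-particle ensemble $\mathcal J_{n-1}(p_1-1,p_2-1)$ rather than your conditional-density/union-bound route, which makes the constant $B_n$ and hence $z_{\gamma,\sigma}$ appear transparently. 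Your symmetry observation $\lambda_i\leftrightarrow 1-\lambda_i$ is correct and would give an alternative one-line derivation from Theorem~\ref{maxldp}, but the paper does not use it.
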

Here
\begin{align}\label{vargs}
	\varphi_{\gamma, \sigma}(x) 
	= 
		\begin{cases}
			\frac{1-\gamma}{2 \gamma} \log (1+\sqrt{\gamma} x)+\frac{1-\gamma \sigma}{2 \gamma \sigma} \log (1-\sqrt{\gamma} \sigma x), & 0<\sigma \gamma < 1;\\ 
			\frac{1-\gamma}{2 \gamma} \log (1+\sqrt{\gamma} x)-\frac{1}{2 \sqrt{\gamma}} x, & \sigma=0,0<\gamma \leq 1; \\
			-\frac{1+\sigma}{4} x^2, & \gamma=0, \sigma \geq0;
		\end{cases} 
\end{align}
and
\begin{align}\label{zgs} 
	z_{\gamma, \sigma} 
	= 
		\begin{cases}
			\frac{1}{2} \log (1+\sigma)+\frac{1+\sigma-\gamma \sigma}{2\gamma \sigma} \log \left(1+\frac{\gamma \sigma}{1+\sigma-\gamma \sigma}\right), & 0<\sigma \gamma < 1; \\ 
			\frac{1}{2}, & \sigma=0,0<\gamma \leq 1; \\ 
			\frac{1}{2} \log (1+\sigma)+\frac{1}{2}, & \gamma=0, \sigma \geq0;
		\end{cases}
\end{align}
and $x$ belongs to its natural domain.

\begin{rem}\label{lx}
	Using the transformation $$\mathbb{P}(X_{(n)}>x)=\mathbb{P}\left(\lambda_{(n)}>  \frac{p_1}{p} +\frac{\sqrt{np_1 }}{p} \; x\right),$$ we can characterize the asymptotical behavior  of  $\lambda_{(n)}$ through the large deviation related to $X_{(n)}.$ Specifically, when $0 < \gamma \sigma < 1,$ $\lambda_{(n)}$ and $X_{(n)}$ behave similarly. In the case of $\gamma \sigma =0,$ the probability $\mathbb{P}\left(X_{(n)}>x\right)$ characterizes the asymptotical behavior of $\lambda_{(n)}$ around the neighborhood of $\frac{\sigma}{1 + \sigma}$, which is related to the moderate deviation of $\lambda_{(n)}$. 
\end{rem}

\subsection{Asymptotical efficiency of the importance sampling estimators} 

Now, we are at the position to state the asymptotical efficiency of the importance sampling estimators. For brevity,  let $\left(\lambda_1, \lambda_2, \cdots, \lambda_n\right)$ and $\left(\widetilde{\lambda}_1, \widetilde{\lambda}_2, \cdots, \widetilde\lambda_{n-1}\right)$ be the $\beta$-Jacobi ensemble $\mathcal J_n(p_1, p_2) $ and $\mathcal J_{n-1}(p_1-1, p_2-1)$, respectively.  

\subsubsection{\bf The IS estimator of $X_{(n)}$ and $X_{(1)}$}
We first state the asymptotical efficiency of 
the importance sampling estimator of $\mathbb P\left(X_{(n)}> x\right)$ for $x \in\left(\widetilde u_2, \frac{1}{\sqrt{\gamma} \sigma}\right).$ 

Set $X_i=\frac{\left(p_1+p_2\right) \lambda_{i}-p_1}{\sqrt{ n p_1}}$ for $1 \leq i \leq n$ and $\widetilde X_i=\frac{\left(p_1+p_2\right) \widetilde \lambda_{i}-p_1}{\sqrt{ n p_1}}$  for $1 \leq i \leq n-1.$ 
Let $\widetilde X_{n}$ be a random variable on $\Omega,$ whose conditional density function given $(\widetilde X_1, \cdots, \widetilde X_{n-1})$ is  \begin{align}\label{expxn}
	h\left(y\right):= \frac{r(x)}{1-e^{-r(x)\left(p_2/\sqrt{np_1}- x \vee \widetilde X_{(n-1)}  \right)}} e^{-r(x)\left(y-x \vee \widetilde X_{(n-1)} \right)} \mathbf 1_{\left\{ x \vee \widetilde X_{(n-1)}<y < p_2/\sqrt{np_1}\right\}},
\end{align} where $J_{\gamma, \sigma}$ is introduced in Theorem \ref{maxldp} and $0<r(x)< 2 \beta (n-1) J_{\gamma, \sigma}^{\prime}(x).$   
Let $Q_n^{p_1, p_2}$ and $R_n^{p_1, p_2}$ be the joint distribution of $\left(X_{(1)}, \cdots, X_{(n)}\right)$ and $\left(\widetilde X_{(1)}, \cdots, \widetilde X_{(n-1)}, \widetilde X_{n} \right)$, respectively. 

According to \eqref{deff}, we define the importance sampling estimator of  $\mathbb P\left(X_{(n)}> x\right)$ as follows:
\begin{align}\label{xndeffn}
F_n(x) = \frac{\mathrm d Q_n^{p_1, p_2}}{\mathrm d R_n^{p_1, p_2}} (X_{(1)}, \cdots, X_{(n)})\mathbf 1_{\left\{X_{(n)}> x\right\}}. 	
\end{align}
 
 When there's no ambiguity, we simplify the notation $F_n(x)$ and $r(x)$ to $F_n$ and $r,$ respectively.

\begin{thm} \label{fnasy}
Under the assumption {\bf A}, for $x \in\left(\widetilde u_2, \frac{1}{\sqrt{\gamma} \sigma}\right)$, the importance sampling estimator $F_n$ of $\mathbb P\left(X_{(n)}> x\right)$ is asymptotically efficient. That is 
$$
\lim _{n \rightarrow \infty} \frac{\log \mathbb{E}^R\left[F_n^2\right] }{2 \log \mathbb P\left(X_{(n)}> x\right)} = 1 .
$$
\end{thm}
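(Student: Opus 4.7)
The plan is to combine the change-of-measure identity
$\mathbb E^R[F_n^2]=\mathbb E^Q\bigl[(\mathrm dQ/\mathrm dR)\,\mathbf 1_{\{X_{(n)}>x\}}\bigr]$
with Theorem~\ref{maxldp}, which yields $\log\mathbb P(X_{(n)}>x)=-\beta n J_{\gamma,\sigma}(x)(1+o(1))$. Jensen's inequality already gives $\mathbb E^R[F_n^2]\ge\mathbb P(X_{(n)}>x)^2$, so the ratio in the theorem is automatically bounded above by $1$, and the entire task reduces to establishing the matching upper bound
\begin{equation*}
\limsup_{n\to\infty}\frac{1}{\beta n}\log\mathbb E^R[F_n^2]\le -2J_{\gamma,\sigma}(x).
\end{equation*}

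The first step is to obtain an explicit formula for $\mathrm dQ/\mathrm dR$. The key algebraic observation is that the Jacobi weight exponents $r_{1,n}-1$ and $r_{2,n}-1$ appearing in \eqref{defjacobi} are invariant under the joint shift $(n,p_1,p_2)\mapsto(n-1,p_1-1,p_2-1)$. This lets one factor the symmetric density $q_n$ of $(X_1,\dots,X_n)$ in the form
\begin{equation*}
q_n(y)=K_n\bigl(\tfrac{\sqrt{np_1}}{p}\bigr)^{\beta(n-1)+1}\widetilde q_{n-1}(y_{<n})\,\prod_{i<n}|y_i-y_n|^{\beta}\,\phi_1(y_n)\phi_2(y_n),
\end{equation*}
where $K_n=C_n^{p_1,p_2}/C_{n-1}^{p_1-1,p_2-1}$ and $\phi_1,\phi_2$ are the two Jacobi weights $[(p_1+\sqrt{np_1}y_n)/p]^{r_1-1}$, $[(p_2-\sqrt{np_1}y_n)/p]^{r_2-1}$ at the shifted point. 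Since the density $h$ in \eqref{expxn} is supported on $(x\vee\widetilde X_{(n-1)},p_2/\sqrt{np_1})$, under $R$ the coordinate $\widetilde X_n$ is always the largest; dividing by $h$ and accounting for the combinatorial factor $n!/(n-1)!=n$ produces an explicit closed-form for $\mathrm dQ/\mathrm dR$ at $(X_{(1)},\dots,X_{(n)})$.

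The second step is to extract the LDP rate from each factor on a ``good'' event $G$ where the empirical measure of $(X_{(1)},\dots,X_{(n-1)})$ is close to $\widetilde\nu_{\gamma,\sigma}$, where $X_{(n-1)}$ stays in a small neighbourhood of $\widetilde u_2$, and where $\sum_{i<n}\log|X_{(i)}-X_{(n)}|$ concentrates at $(n-1)\int\log|X_{(n)}-y|\,\widetilde\nu_{\gamma,\sigma}(\mathrm dy)$. A Stirling expansion of $K_n$ together with the scaling factor contributes $-\beta n z_{\gamma,\sigma}+o(\beta n)$, precisely the constant \eqref{zgs}; the logarithms of $\phi_1(X_{(n)})\phi_2(X_{(n)})$ combined with residual Stirling terms contribute $-\beta n\varphi_{\gamma,\sigma}(X_{(n)})+o(\beta n)$, matching \eqref{vargs}; and the Vandermonde contributes $\beta n\int\log|X_{(n)}-y|\,\widetilde\nu_{\gamma,\sigma}(\mathrm dy)+o(\beta n)$. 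Finally, from the explicit form of $h$ one gets $1/h(X_{(n)})\le r^{-1}e^{r(X_{(n)}-x\vee X_{(n-1)})}$, which on $G$ (where $X_{(n-1)}\le x$) contributes at most $r(X_{(n)}-x)$ to the exponent plus a subexponential prefactor. Reading off the three pieces of $J_{\gamma,\sigma}$ from Theorem~\ref{maxldp} yields on $G\cap\{X_{(n)}>x\}$ the pointwise bound
\begin{equation*}
(\mathrm dQ/\mathrm dR)(X_{(1)},\dots,X_{(n)})\le\exp\!\bigl(-\beta n J_{\gamma,\sigma}(X_{(n)})+r(X_{(n)}-x)+o(\beta n)\bigr).
\end{equation*}

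The third step exploits the subcriticality condition $0<r<2\beta(n-1)J'_{\gamma,\sigma}(x)$. Since $J_{\gamma,\sigma}$ is smooth and strictly increasing on $(\widetilde u_2,1/(\sqrt\gamma\sigma))$, a first-order Taylor expansion at $x$ combined with this condition gives $\beta nJ_{\gamma,\sigma}(X_{(n)})-r(X_{(n)}-x)\ge\beta nJ_{\gamma,\sigma}(x)+o(\beta n)$ uniformly in $X_{(n)}\in(x,p_2/\sqrt{np_1})$. Substituting this back, $(\mathrm dQ/\mathrm dR)\le\exp(-\beta nJ_{\gamma,\sigma}(x)+o(\beta n))$ on $G\cap\{X_{(n)}>x\}$, so the good-event contribution to $\mathbb E^R[F_n^2]=\mathbb E^Q[(\mathrm dQ/\mathrm dR)\mathbf 1_{\{X_{(n)}>x\}}]$ is bounded by $\exp(-\beta nJ_{\gamma,\sigma}(x)+o(\beta n))\cdot\mathbb P(X_{(n)}>x)\le\exp(-2\beta nJ_{\gamma,\sigma}(x)+o(\beta n))$ via Theorem~\ref{maxldp}. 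The principal obstacle is showing that the exceptional set $G^c$ contributes negligibly: one needs the deviations of the empirical measure, of $X_{(n-1)}$, and of the logarithmic potential to have $Q$-probability decaying at speed $\beta n$ with a rate strictly exceeding $2J_{\gamma,\sigma}(x)$, so that a crude worst-case bound on $\mathrm dQ/\mathrm dR$ restricted to $G^c$ still produces a contribution of order $\exp(-2\beta nJ_{\gamma,\sigma}(x)+o(\beta n))$. This relies on the LDP for the empirical measure established in \cite{Ma} and on an LDP-type upper tail estimate for $X_{(n-1)}$ analogous to the second-largest eigenvalue argument used in \cite{JiangRare} for the $\beta$-Laguerre case.
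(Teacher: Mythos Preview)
Your overall strategy (Jensen for the lower bound, good/bad event split for the upper bound, identification of the three additive pieces $z_{\gamma,\sigma}$, $\varphi_{\gamma,\sigma}$, and the logarithmic potential) matches the paper's, but Step~3 contains a concrete gap. You bound $\mathrm dQ/\mathrm dR$ pointwise by $\exp(-\beta nJ_{\gamma,\sigma}(x)+o(\beta n))$ on $G$ and then multiply by $\mathbb P(X_{(n)}>x)$. The pointwise step needs $\beta n\bigl(J_{\gamma,\sigma}(X_{(n)})-J_{\gamma,\sigma}(x)\bigr)\ge r(X_{(n)}-x)$ for all $X_{(n)}>x$, which by convexity of $J_{\gamma,\sigma}$ amounts to $r\le\beta nJ'_{\gamma,\sigma}(x)$; the hypothesis, however, allows $r$ up to $2\beta(n-1)J'_{\gamma,\sigma}(x)$. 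For $r$ in the upper half of the admissible range, $\mathrm dQ/\mathrm dR$ can exceed $e^{-\beta nJ_{\gamma,\sigma}(x)}$ by a factor that is exponentially large in $\beta n$ whenever $X_{(n)}$ is well above $x$, and a single factor $\mathbb P(X_{(n)}>x)$ cannot absorb this. The paper never decouples the Radon--Nikodym derivative from the $Q$-integration: using the factorization \eqref{orderpdf1}, one writes $\mathbb E^Q\bigl[(\mathrm dQ/\mathrm dR)\mathbf 1_{\text{good}}\bigr]$ as $B_n^2\int h^{-1}(x_n)\,u_n^2(x_n)\prod_i(x_n-x_i)^{2\beta}\,\mathrm dx_n\,\mathrm dQ_{n-1}^{p_1-1,p_2-1}$, the conditional density of $X_{(n)}$ contributing a \emph{second} copy of $B_n u_n\prod(x_n-x_i)^\beta$. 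This squaring \emph{before} integrating in $x_n$ produces $-2J_{\gamma,\sigma}(x_n)$ in the exponent and leaves an $x_n$-integral of the form $\int_x^M\exp\bigl\{2\beta(n-1)\int_x^{x_n}(\tfrac{r}{2\beta(n-1)}-J'_{\gamma,\sigma}(t))\,\mathrm dt\bigr\}\,\mathrm dx_n$, which is bounded exactly under the stated condition $r<2\beta(n-1)J'_{\gamma,\sigma}(x)$.

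Two smaller points also need repair. First, your good event does not truncate $X_{(n)}$ from above, so the claimed uniformity of the $o(\beta n)$ errors over $X_{(n)}\in(x,p_2/\sqrt{np_1})$ fails near the right endpoint (and when $\gamma\sigma=0$ that endpoint tends to $+\infty$); the paper imposes cutoffs $X_{(n)}<M$, $X_{(1)}>-L$ and sends $M\to\tfrac{1}{\sqrt\gamma\sigma}$, $L\to\tfrac{1}{\sqrt\gamma}$ only at the very end, using Lemma~\ref{I01} to show the excised pieces contribute $-\infty$ on the $\tfrac{1}{\beta n}\log$ scale. Second, on the bad event $\{L_{n-1}\notin B_{L,M}(\delta)\}$ the crude bound on $\mathrm dQ/\mathrm dR$ is already $e^{O(\beta n)}$, so a decay of $\mathbb P(G^c)$ at speed $\beta n$ with rate merely exceeding $2J_{\gamma,\sigma}(x)$ would not suffice; what actually saves the argument is that the empirical-measure LDP runs at speed $\beta n^2$, which dominates any $e^{O(\beta n)}$ prefactor.
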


\begin{rem}\label{rem2}
Theorem \ref{fnasy} remains valid when $x$ is replaced by $x_n$ once  $\lim\limits_{n \rightarrow \infty} x_n \in \left(\widetilde u_2, \frac{1}{\sqrt{\gamma} \sigma}\right).$ 
\end{rem}

Similarly, via the large deviation principle of $X_{(1)}$ obtained in Theorem \ref{minldp},  we can construct asymptotically efficient importance sampling estimator $G_n$ of the tail probability  $\mathbb P\left(X_{(1)} <  x \right)$ for $x \in \left(-\frac{1}{\sqrt{\gamma}}, \widetilde u_1\right)$.

Let $\widehat X_{1}$ be a random variable, whose conditional density function given 
$(\widetilde X_1, \cdots, \widetilde X_{n-1})$ is $$\hat h\left(y\right):= \frac{r}{1-e^{-r\left( \sqrt{p_1/n}+ x \wedge \widetilde X_{(1)} \right)}} e^{r\left(y-x \wedge \widetilde X_{(1)} \right)} \mathbf 1_{\left\{- \sqrt{p_1/n} < y < x \wedge \widetilde X_{(1)} \right\}}$$ with $I_{\gamma, \sigma}$ being the rate function related to $X_{(1)}$ and $0<r<2 \beta (n-1) I_{\gamma, \sigma}^{\prime}(x).$  
Let $T_n^{p_1, p_2}$ be the joint distribution of $\left(\widehat{X}_{1}, \widetilde X_{(1)}, \cdots, \widetilde X_{(n-1)} \right).$
\begin{thm}\label{thfnasy} 
Under the assumption {\bf A}, the importance sampling estimate $$G_n  = \frac{\mathrm d Q_n^{p_1, p_2}}{\mathrm d T_n^{p_1, p_2}} (X_{(1)}, \cdots, X_{(n)}) \, \mathbf 1_{\{X_{(1)} < x\} } $$ of $\mathbb P\left(X_{(1)} < x\right)$ satisfies 
$$
\lim _{n \rightarrow \infty} \frac{\log \mathbb{E}^R\left[G_n^2\right] }{2 \log \mathbb P\left(X_{(1)}< x\right)} = 1 .
$$ 
\end{thm}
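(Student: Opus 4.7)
The plan is to adapt the argument of Theorem \ref{fnasy} to the lower extreme by a reflection-type symmetry. Indeed, the proposal density $\hat h$ is a left-tailed exponential tilt mirroring the right-tailed tilt $h$ used for $F_n$, and Theorem \ref{minldp} provides a rate function $I_{\gamma,\sigma}$ with exactly the same analytic structure as $J_{\gamma,\sigma}$ (same logarithmic potential, same $\varphi_{\gamma,\sigma}$ and $z_{\gamma,\sigma}$, only the natural domain is reflected about the bulk). The bulk of the argument should therefore transfer verbatim after interchanging the roles of $X_{(1)}$ and $X_{(n)}$, of $\widetilde X_{(1)}$ and $\widetilde X_{(n-1)}$, and of $\widehat X_1$ and $\widetilde X_n$.

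First I would dispose of the trivial half. By Jensen's inequality applied to the unbiased estimator, $\mathbb E^R[G_n^2] \geq (\mathbb E^R G_n)^2 = \mathbb P(X_{(1)}<x)^2$, and since $\log \mathbb P(X_{(1)}<x)<0$ this yields
\[
\limsup_{n\to\infty}\frac{\log\mathbb E^R[G_n^2]}{2\log \mathbb P(X_{(1)}<x)}\leq 1.
\]
This step uses nothing about the specific form of $T_n^{p_1,p_2}$.

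For the matching $\liminf\geq 1$, I would rewrite
\[
\mathbb E^R[G_n^2] = \mathbb E^Q\left[\frac{\mathrm d Q_n^{p_1,p_2}}{\mathrm d T_n^{p_1,p_2}}(X_{(1)},\ldots,X_{(n)})\,\mathbf 1_{\{X_{(1)}<x\}}\right]
\]
and expand the Radon--Nikodym derivative using the Jacobi densities \eqref{defjacobi} at parameters $(n,p_1,p_2)$ and $(n-1,p_1-1,p_2-1)$. A useful observation is that $r_{1,n}$ and $r_{2,n}$ are unchanged when $(n,p_1,p_2)$ is replaced by $(n-1,p_1-1,p_2-1)$, so that, upon identifying $\widetilde X_{(i)}$ with $X_{(i+1)}$, all power weights at $X_{(2)},\ldots,X_{(n)}$ cancel between $\mathrm d Q$ and $\mathrm d T$, along with the Vandermonde factors indexed by $2\leq i<j\leq n$. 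Modulo a ratio of normalizing constants handled by Stirling's formula, what remains is
\[
\prod_{j=2}^{n}(X_{(j)}-X_{(1)})^\beta \, X_{(1)}^{r_{1,n}-1}(1-X_{(1)})^{r_{2,n}-1} \big/ \hat h(X_{(1)}\mid X_{(2)},\ldots,X_{(n)}).
\]
Invoking the almost sure convergence $\frac1{n-1}\sum_{j=2}^{n}\delta_{X_{(j)}}\to \widetilde\nu_{\gamma,\sigma}$ and combining the logarithmic sum $\sum_{j\geq 2}\log|X_{(1)}-X_{(j)}|$ with the two power weights via the explicit forms \eqref{vargs}--\eqref{zgs}, the leading-order log of the ratio reproduces $-\beta(n-1) I_{\gamma,\sigma}(X_{(1)})$. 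The exponential proposal then contributes a linear term in $X_{(1)}$ controlled by $r$; the constraint on $r$ relative to $I'_{\gamma,\sigma}(x)$ is calibrated exactly so that, on $\{X_{(1)}<x\}$, the integrand is dominated by $e^{-2\beta n I_{\gamma,\sigma}(x)(1+o(1))}$, which together with Theorem \ref{minldp} delivers the matching bound.

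The main technical obstacle, just as in the proof of Theorem \ref{fnasy}, is to upgrade the weak convergence of the empirical measure to a quantitative estimate for the logarithmic potential $\int\log|X_{(1)}-y|\,\widetilde\nu_{\gamma,\sigma}(\mathrm d y)$ that is uniform on the rare event $\{X_{(1)}<x\}$. This calls for handling the integrable singularity of $\log|\cdot|$ via a truncation argument, together with tail control on the smallest spacing $X_{(2)}-X_{(1)}$ (ensuring $X_{(1)}$ is not atypically close to its nearest neighbour), and for exploiting that the left endpoint $-1/\sqrt\gamma$ of the support of $\widetilde\nu_{\gamma,\sigma}$ leaves room for $I_{\gamma,\sigma}$ to be strictly positive throughout the rare event $(-1/\sqrt\gamma,x]\subset(-1/\sqrt\gamma,\widetilde u_1)$. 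Once these estimates, mirrored from the proof of Theorem \ref{fnasy}, are in place, the asymptotic efficiency of $G_n$ follows.
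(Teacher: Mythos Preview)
Your overall strategy---mirror the proof of Theorem \ref{fnasy} by swapping the roles of $X_{(1)}$ and $X_{(n)}$---is exactly what the paper does, and your Jensen argument for the easy direction and the Radon--Nikodym simplification are both correct. However, your description of the ``main technical obstacle'' and how to resolve it does not match what actually makes the argument go through, and the tools you name would not close the gap.

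First, almost sure convergence of the empirical measure $L_{n-1}$ to $\widetilde\nu_{\gamma,\sigma}$ is not enough: you are bounding an \emph{expectation}, and atypical configurations of $(X_{(2)},\ldots,X_{(n)})$ can in principle dominate. The paper's mechanism is a four-way decomposition of the event $\{X_{(1)}<x\}$: truncate $X_{(1)}$ from below at $-L$, truncate $X_{(n)}$ from above at $M$, and split the remaining piece according to whether $L_{n-1}$ lies in a Wasserstein ball $B_{L,M}(\delta)$ around $\widetilde\nu_{\gamma,\sigma}$ or not. The three ``bad'' pieces are shown to be negligible using Lemma \ref{I01} for the endpoint truncations and, crucially, the large deviation principle for $L_{n-1}$ at speed $\beta n^2$ (Theorem 1.7 in \cite{Ma}) for the piece $\{L_{n-1}\notin B_{L,M}(\delta)\}$. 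That LDP is the quantitative upgrade you are looking for; a.s.\ convergence alone is far too weak.

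Second, no ``tail control on the smallest spacing $X_{(2)}-X_{(1)}$'' is needed, and trying to obtain such control would be difficult. On the good event $\{L_{n-1}\in B_{L,M}(\delta)\}$ the logarithmic potential is handled exactly as in \eqref{sslog}: split $\int\log|z-y|\,(\mu-\widetilde\nu_{\gamma,\sigma})(\mathrm dy)$ into $\{|z-y|>\eta\}$, where $\log|z-\cdot|$ is Lipschitz and Kantorovich--Rubinstein applies, and $\{|z-y|\le\eta\}$, where $\log|z-y|<0$ so the contribution under $\mu$ is nonpositive and the contribution under $\widetilde\nu_{\gamma,\sigma}$ vanishes because $z\in[-L,x]$ stays a fixed distance from the support $[\widetilde u_1,\widetilde u_2]$. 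The singularity of $\log$ works \emph{for} you in the upper bound; it never has to be controlled via spacings. Once these pieces are in place, the calibration of $r$ against $I'_{\gamma,\sigma}(x)$ enters exactly as you say, and the bound $-2I_{\gamma,\sigma}(x)$ follows.
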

\subsubsection{\bf The IS estimator of $\lambda_{(n)}$ and $\lambda_{(1)}$}
Next, we present results for $\lambda_{(n)}$. 
Using the transformation
$$
\mathbb P \left( \lambda_{(n)} > x \right) = \mathbb P\left( X_{(n)} >  \frac{px -p_1}{\sqrt{np_1}} \right) 
$$
and taking account of Remark \ref{rem2},
we can characterize $\mathbb P \left( \lambda_{(n)} > x \right)$ by the importance sampling estimator $ F_{n}\left(\frac{px -p_1}{\sqrt{np_1}}  \right) $  when $0<\gamma\sigma<1$ since $x_n:=\frac{px -p_1}{\sqrt{np_1}}$  satisfies $$\lim\limits_{n\to\infty} x_n=\frac{(1+\sigma)x-\sigma}{\sqrt{\gamma}\sigma}\in\left(\widetilde u_2, \frac{1}{\sqrt{\gamma} \sigma}\right).$$ 
Here is the statement. 
\begin{thm}\label{thmln}
	Under the assumption {\bf A}, when $0<\gamma\sigma<1$ and for $x \in (u_2,1)$, the importance sampling estimator $ F_{n}\left(\frac{px -p_1}{\sqrt{np_1}}  \right) $ is asymptotically efficient for $\mathbb P\left(\lambda_{(n)}> x\right).$ In the case of $\gamma \sigma =0,$ let $ x = u_2 + \frac{\sqrt{np_1}}{p}y$ for $y>0.$ Thus the importance sampling estimator $F_n\left( \frac{pu_2 -p_1}{\sqrt{np_1}}  + y \right)$ is asymptotically efficient for $\mathbb P\left(\lambda_{(n)}> u_2 + \frac{\sqrt{np_1}}{p}y \right),$ which is related to the moderate deviation of $\lambda_{(n)}.$
\end{thm}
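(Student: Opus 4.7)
The plan is to deduce Theorem \ref{thmln} directly from Theorem \ref{fnasy} (via Remark \ref{rem2}) through the exact transformation
\begin{align*}
\mathbb{P}(\lambda_{(n)} > x) = \mathbb{P}\left(X_{(n)} > \frac{px - p_1}{\sqrt{np_1}}\right),
\end{align*}
which follows from $X_i = (p\lambda_i - p_1)/\sqrt{np_1}$. Since this map is a deterministic bijection between the ordered $\lambda$-tuple and the ordered $X$-tuple, the Radon--Nikodym derivative and the indicator appearing in the definition \eqref{xndeffn} transport verbatim, so $F_n(x_n)$ is exactly the IS estimator for $\mathbb{P}(X_{(n)} > x_n)$ with $x_n := (px - p_1)/\sqrt{np_1}$. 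The task therefore reduces to verifying, in each regime, that $\lim_n x_n \in (\widetilde u_2, 1/(\sqrt{\gamma}\sigma))$ so that Remark \ref{rem2} applies.

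For the case $0 < \gamma\sigma < 1$, assumption \textbf{A} gives $p_1/\sqrt{np_1} \to 1/\sqrt{\gamma}$ and $p/\sqrt{np_1} \to (1+\sigma)/(\sigma\sqrt{\gamma})$, hence $x_n \to \ell(x) := \frac{(1+\sigma)x - \sigma}{\sigma\sqrt{\gamma}}$. A short calculation using the explicit formulas for $u_2$ and $\widetilde u_2$ shows $\ell(u_2) = \widetilde u_2$ and $\ell(1) = 1/(\sqrt{\gamma}\sigma)$; since $\ell$ is strictly increasing in $x$, it maps $(u_2, 1)$ bijectively onto $(\widetilde u_2, 1/(\sqrt{\gamma}\sigma))$, and Remark \ref{rem2} delivers the conclusion. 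For the moderate regime $\gamma\sigma = 0$, substituting $x = u_2 + \frac{\sqrt{np_1}}{p}y$ yields $x_n = \frac{pu_2 - p_1}{\sqrt{np_1}} + y$; here $1/(\sqrt{\gamma}\sigma) = +\infty$, so it suffices to verify $\lim_n \frac{pu_2 - p_1}{\sqrt{np_1}} = \widetilde u_2$, which follows from the same identity $\ell(u_2) = \widetilde u_2$ extended by continuity to the degenerate Wachter densities in \eqref{dennu}. Since $y > 0$, we obtain $\lim_n x_n = \widetilde u_2 + y \in (\widetilde u_2, +\infty)$, and Remark \ref{rem2} closes the argument.

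The only delicate point is the identification $\frac{pu_2 - p_1}{\sqrt{np_1}} \to \widetilde u_2$ in the degenerate cases $\sigma = 0$ or $\gamma = 0$, where the expressions for $u_2$ and $\widetilde u_2$ must be matched with the Marchenko--Pastur and semicircle specializations of the formulas. Everything else is a purely mechanical change of variables that carries the asymptotic efficiency of $F_n$ across from $X_{(n)}$ to $\lambda_{(n)}$; no new probabilistic estimates beyond Theorem \ref{fnasy} are needed, which reflects why the moderate-deviation-type statement here is essentially a corollary of the large-deviation-based estimator for the rescaled variables $X_i$.
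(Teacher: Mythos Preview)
Your approach matches the paper's exactly: the paper's own justification of Theorem \ref{thmln} is the short paragraph preceding the statement, which invokes the same affine transformation together with Remark \ref{rem2} and only treats the case $0<\gamma\sigma<1$ in detail. For that case your argument is correct and complete.

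For the degenerate case $\gamma\sigma=0$, however, your key claim that $\frac{pu_2-p_1}{\sqrt{np_1}}\to\widetilde u_2$ is false as stated. Take $\sigma=0$ and $0<\gamma<1$: then the Wachter formula gives $u_2=0$, so
\[
\frac{pu_2-p_1}{\sqrt{np_1}}=-\sqrt{p_1/n}\longrightarrow -\frac{1}{\sqrt{\gamma}},
\]
whereas $\widetilde u_2=\sqrt{\gamma}+2$. Your appeal to ``the same identity $\ell(u_2)=\widetilde u_2$ extended by continuity'' cannot work here: $\ell(x)=\frac{(1+\sigma)x-\sigma}{\sigma\sqrt{\gamma}}$ is a $0/0$ form when $\sigma\sqrt{\gamma}=0$, and the finite-$n$ quantity $\frac{pu_2-p_1}{\sqrt{np_1}}$ depends on $\sigma_n=p_1/p_2$ and $\gamma_n=n/p_1$ individually, not only through their limits. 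Likewise, when $\gamma=0$ and $\sigma>0$ the limit of $\frac{pu_2-p_1}{\sqrt{np_1}}$ is an $\infty\cdot 0$ form whose value depends on the rate at which $p_1/p_2\to\sigma$, which assumption {\bf A} does not control. The paper itself does not supply a detailed argument for this sub-case, so you are not departing from the paper's level of rigor; but the specific limit you assert is incorrect, and consequently the condition $y>0$ is not by itself enough to place $\lim_n x_n$ in $(\widetilde u_2,\infty)$ so that Remark \ref{rem2} applies.
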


\paragraph{\bf The IS estimator of $Z_{(1)}$ and $Z_{(n)}$} Let's define $Z_i = \frac{\left(p_1+p_2\right) \lambda_{i}}{p_1},$ for $1 \leq i \leq n.$ In \cite{Ma}, Ma proves that
\begin{align*}
	Z_{(n)} \longrightarrow \left(\sqrt{1-\frac{\sigma \gamma}{1+\sigma}}+\sqrt{\frac{\gamma}{1+\sigma}}\right)^2  \quad \text{ and } \quad
	Z_{(1)} \longrightarrow \left(\sqrt{1-\frac{\sigma \gamma}{1+\sigma}}-\sqrt{\frac{\gamma}{1+\sigma}}\right)^2 
\end{align*}
almost surely as $n\to\infty.$   
From
$$\mathbb{P}(X_{(n)}>x)=\mathbb{P}\left(Z_{(n)}> 1+\sqrt{\frac{n}{p_1} } \; x\right),$$ we observe that $Z_{(n)}$ and $Z_{(1)}$ can have similar results and we don not give the statements here.   

 The remainder of this paper is organized as follows: in the next section, we will focus on the simulation of importance sampling. The third section provides the proofs of large deviations, the fourth section is devoted to the proofs of Theorem \ref{fnasy} and \ref{thfnasy}, and we collect some lemmas in the last section.

\section{Simulations of Importance sampling}
In this section, we numerically study the tail probability of the $\beta$-Jacobi ensemble using the IS estimator 
$$
F_n(x) = \frac{\mathrm d Q_n^{p_1, p_2}}{\mathrm d R_n^{p_1, p_2}} (X_{(1)}, \cdots, X_{(n)})\mathbf 1_{\left\{X_{(n)}> x\right\}}.
$$ as defined in \eqref{xndeffn}.
Note that the estimator $F_n(x)$ can be written as
\begin{align}\label{deff1} 
	F_n(x) & = \frac{g_{n}^{p_1, p_2}\left(X_{(1)}, \cdots, X_{(n)}\right)}{g_{n-1}^{p_1-1, p_2-1}\left(X_{(1)}, \cdots, X_{(n-1)}\right) h(X_{(n)})}\mathbf 1_{\left\{X_{(n)}> x\right\}}\\
	& = B_n h^{-1}(X_{(n)}) \prod_{i=1}^{n-1}(X_{(n)} -X_{(i)})^{\beta} u_n (X_{(n)}) \mathbf 1_{\left\{X_{(n)} > x \vee X_{(n-1)} \right\}} , \nonumber  
 \end{align}
where $B_n$ is the normalizing constant and will be introduced later.  

Following the method in \cite{JiangRare}, we characterize the measure $R_n^{p_1,p_2}$ in Algorithm \ref{algoL}. We first sample from the $\beta$-Jacobi ensemble by utilizing the tridiagonal random matrix constructed by Killip and Nenciu \cite{Killip} from the $\beta$-Jacobi ensemble $\mathcal J_n(p_1, p_2)$.

	\vspace{1em}
		\begin{breakablealgorithm}
		\caption{Sampling from the $\beta$-Jacobi ensemble $\mathcal{J}_n(p_1, p_2)$}

		\begin{algorithmic}[1]
			\Require The parameters $\beta, n, p_1, p_2$ in the $\beta$-Jacobi ensemble $\mathcal{J}_n(p_1, p_2)$
			\Ensure $n$-dimensional random vector from $\mathcal{J}_n(p_1, p_2)$		
			\Function{FunJ}{$\beta, n, p_1, p_2$}
			\State{$ \mathcal{J}_n^{p_1, p_2} \gets $ Zero Matrix of $n \times n$}
			\State{$(c(0), s(0)) \gets (0, 0)$}
			
			\For{ $k$ from $1$ to $n$ }
			\State{ $c(k) \gets$ sample from $\operatorname{Beta}\left( \frac{\beta}{2} \left(p_1-k+1  \right), \frac{\beta}{2} \left(p_2-k+1 \right) \right)$}
			\State{ $s(k) \gets $ sample from $\operatorname{Beta}\left( \frac{\beta}{2} \left(n-k \right),  \frac{\beta}{2} \left(p_1+p_2-n-k+1  \right)  \right)$ }
			\State{ $\mathcal{J}_n^{p_1, p_2}(k, k) \gets s(k-1)(1-c(k-1)) +c(k)(1-s(k-1)) $  }
			\While{$k \neq n$} 
			\State{ $ \mathcal{J}_n^{p_1, p_2}(k, k+1) \gets \sqrt{c(k)(1-c(k))s(k)(1-s(k-1))}$}
			\State{ $\mathcal{J}_n^{p_1, p_2}(k+1,k ) \gets \mathcal{J}_n^{p_1, p_2}(k, k+1) $ }
			\EndWhile
			\EndFor
			\State{\textbf{return}  the eigenvalues of $\mathcal{J}_n^{p_1, p_2}$}	
			\EndFunction			
		\end{algorithmic}
	\end{breakablealgorithm}
	
	\vspace{1em}

	\vspace{1em}
	
	\begin{breakablealgorithm}
		\caption{Sampling under the measure $R_n^{p_1,p_2}$ }\label{algoL}
		\begin{algorithmic}[1]
			\Require the parameters $\beta, n, p_1, p_2, x_n$ in $ \mathbb{P}\left(X_{(n)}>  x \right) $
			
			\Ensure $n$ dimensional random vector following the measure $R_n^{p_1,p_2}$ 
				
			\State{ $\left( \lambda_{(1)} , \ldots, \lambda_{(n-1)} \right)  \gets $ the order statistics of FunJ$(\beta, n-1, p_1-1, p_2-1)$ eigenvalues}
			\State{ $\left( \widetilde X_{(1)}, \cdots, \widetilde X_{(n-1)}  \right)  \gets \left( \frac{p \lambda_{(1)} -p_1}{\sqrt{n p_1}}  , \ldots,  \frac{p \lambda_{(n-1)} -p_1}{\sqrt{n p_1}} \right)$ }			
			\If{ $n/p_1 > 0.01$ }
				\State{ $\gamma \gets \frac{n}{p_1} $ }
			\EndIf{ $\gamma \gets 0$ }
		
			\If{ $p_1/p_2 > 0.01$ }
				\State{ $\sigma \gets \frac{p_1}{p_2} $ }
			\EndIf{ $\sigma \gets 0$ }			
			\Repeat 
			\State{$\widetilde X_{n} \gets$ sample from $\operatorname{Exp}\left(\beta (n-1) J_{\gamma, \sigma}^{\prime}(x) \right)$}	
			\State{$\widetilde X_{n} \gets \widetilde X_{n}  + x \vee \widetilde X_{(n-1)} $}
			\Until{($\widetilde X_{n} <  p_2/\sqrt{np_1} $)}
			
			\State{\textbf{return} $\left(\widetilde X_{(1)}, \cdots, \widetilde X_{(n-1)}, \widetilde X_{n} \right)$}
		\end{algorithmic}
	\end{breakablealgorithm}
	
	\vspace{1em}

In lines 9-12 of the above algorithm, given $(\widetilde X_1, \cdots, \widetilde X_{n-1})$, we sample from a truncated exponential distribution with parameter $\beta (n-1) J_{\gamma, \sigma}^{\prime}(x) $ and truncation interval $(x \vee \widetilde X_{(n-1)}, p_2/\sqrt{np_1})$. Then the density function of $\widetilde X_{n}$ is defined as in \eqref{expxn}. With the help of Lemma \ref{stieltjes}, we can easily get 
$$
    J_{\gamma, \sigma}^{\prime}(x) = \frac{\sqrt{\left(\widetilde{u}_2-x\right)\left(\widetilde{u}_1-x\right)}}{(1+\sqrt{\gamma} x)(1-\sigma \sqrt{\gamma} x)} \left(\gamma \sigma \mathbf 1_{\left\{ 0<\sigma \gamma < 1 \right\}} + \frac{1+\sigma}{2}\mathbf 1_{\left\{ \sigma \gamma =0 \right\}} \right).
$$
Here, $ \widetilde{u}_2 $ and $\widetilde{u}_1$ are defined in \eqref{defu2} and \eqref{defu1}, respectively.

Additionally, we showed that the IS estimator $ F_{n}\left(\frac{px -p_1}{\sqrt{np_1}}  \right) $ accurately estimates $\mathbb{P}(\lambda_{(n)}>x)$ with asymptotically efficiency in Theorem \ref{thmln}. 
Recall the empirical average $ \widehat{F}_N=\left(F^{(1)}+\cdots+F^{(N)}\right)/N $ and its coefficient of variation is given by \eqref{errorvar}.
In the following algorithm, we evaluate the actual performance of the empirical average $ \widehat{F}_N\left(\frac{px -p_1}{\sqrt{np_1}}  \right) $ for $ \mathbb{P}\left(\lambda_{(n)}>  x\right) $ by calculating the mean and coefficient of variation of $N$ independent samples. 

\vspace{1em}
\begin{breakablealgorithm}
\caption{Some statistical indicators of $ \widehat{F}_N$}\label{algoF}
\begin{algorithmic}[1]
	\Require the parameters $\beta, n, p_1, p_2, x$ in $ \mathbb{P}\left(\lambda_{(n)}>  x \right) $ and number of simulations $N$.
 	\Ensure The mean Est. and coefficient of variation C.O.V. of $N$ independent samples $F^{(1)}, \ldots, F^{(N)}.$
	\State{ $x \gets \frac{px -p_1}{\sqrt{np_1}}  $ }
	\For{ $k$ from $1$ to $n$ }
		\State{ $\left(\ell_1, \cdots, \ell_n\right) \gets $ sample from measure $R_n^{p_1,p_2}$}	
		\State{ $F^{(k)} \gets B_n \prod_{i=1}^{n-1} \left(\ell_n - \ell_i \right)^{\beta} u_n(\ell_n) h^{-1}\left(\ell_n \right)$}
	\EndFor
	\State{ Est.  $ \gets \sum_{ k=1}^nF^{(k)}/N $ }
	\State{ Std.  $ \gets \sqrt{\sum_{ k=1}^n\left(F^{(k)} - Est. \right)^2/N} $ }
	\State{ C.O.V.  $ \gets$  Std./Est.  }
	\State{\textbf{return} (Est., C.O.V. ) }
\end{algorithmic}
\end{breakablealgorithm}
\vspace{1em}

Utilizing the aforementioned algorithm and the R language, we conducted a series of simulation experiments. Firstly, in Figure (A), we show the relationship between the estimated values of $\mathbb{P}\left(\lambda_{(n)}> x \right)$ obtained from 5000 simulations and $x$ on a logarithmic scale axis. Secondly, in Figure (B), we present the relationship between the coefficient of variation of sampled data $F^{(1)}, \ldots, F^{(N)}$ and the number of simulations $N$ for different values of $x$. For a detailed implementation of the R code, please visit \href{}{https://github.com/S1yuW/Rare-event.}
\begin{figure}[H]\label{covfn}
  \begin{subfigure}{0.24\textwidth}
    \includegraphics[width=\linewidth]{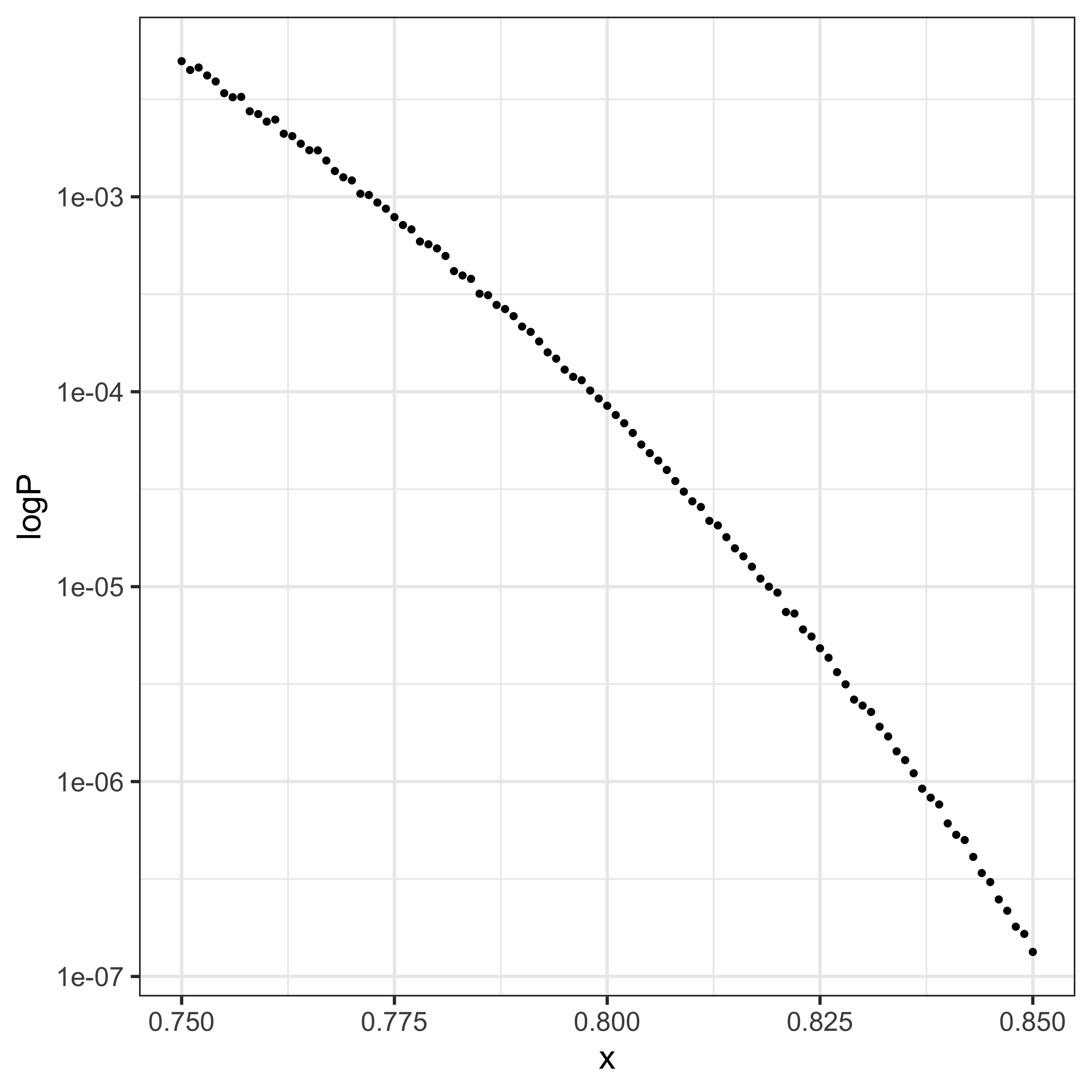}
    \caption{The estimates.}
  \end{subfigure}
  \hfill
  \begin{subfigure}{0.75\textwidth}
    \includegraphics[width=\linewidth]{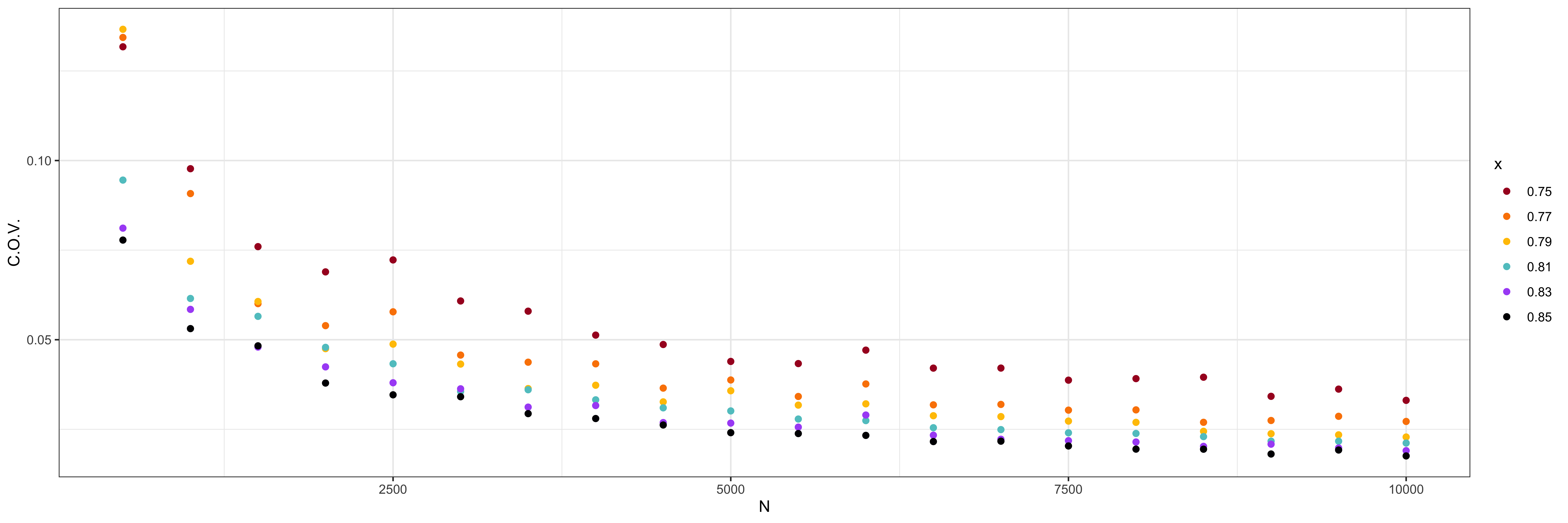}
    \caption{The C.O.V. of $F^{(1)}, \ldots, F^{(N)}$.}
  \end{subfigure}
  \caption{$\beta =2$, $n=10$, $p_1= 20$, and $p_2= 40.$}
\end{figure}
As the simulation size $N$ increases, the coefficient of variation remains relatively small and stabilizes, indicating the effectiveness of the estimator. Furthermore, while holding other parameters constant, the coefficient of variation decreases as $x$ increases, suggesting that the estimator is more effective for larger $x$ values. 

\section{ Proof of Theorem \ref{maxldp} and Theorem \ref{minldp}}\label{ldp}

In this section, we prove the large deviations of $X_{(n)}$ and $X_{(1)}.$ We begin by presenting the expression for the joint probability distribution $Q_n^{p_1, p_2}$ of $\left(X_{(1)}, \ldots, X_{(n)}\right)$. 
In the subsequent analysis, we introduce the parameters $s_{i, n}=\sqrt{n p_1} / p_i$ and $r_{i, n} = \beta(p_i-n+1)/2$, where $i = 1, 2$. Additionally, we define the function
\begin{equation}\label{defu}
u_n(x) = \left(1+s_{1, n} x\right)^{r_{1, n}-1}\left(1-s_{2, n} x\right)^{r_{2, n}-1}.
\end{equation}
The density function of $Q_n^{p_1, p_2}$ is then given by
\begin{align}\label{orderpdf}
	g_n^{p_1, p_2}\left(x_1, \cdots, x_n\right) 
	=& \bar{C}_{n}^{p_1, p_2} \prod_{1 \leq i<j \leq n}\left(x_j-x_i\right)^\beta \prod_{1 \leq i \leq n} u_n(x_i)  \mathbf 1_{\left\{x_1<\cdots<x_n\right\}}
\end{align}
with the normalizing constant
\begin{align*}
\bar{C}_{n}^{p_1, p_2}
& = n!C_{n}^{p_1, p_2}\left(\frac{s_{1,n} }{s_{1,n}+s_{2,n}} \right)^{\beta \frac{n(n-1)}{2} + n r_{2, n}}\left(\frac{s_{2,n} }{s_{1,n}+s_{2,n}}\right)^{\beta \frac{n(n-1)}{2} + n r_{1, n}}.
\end{align*}
We frequently use the following factorization as  
\begin{align}\label{orderpdf1}
	\mathrm{d}Q_n^{p_1, p_2}
	=  B_n \prod_{i=1}^{n-1}(x_n-x_i)^{\beta} u_n(x_n) \mathbf 1_{\left\{x_n> x_{n-1}\right\}} \mathrm{~d}x_n \mathrm{~d}Q_{n-1}^{p_1-1, p_2-1},
\end{align}
where $B_n$ is defined by
\begin{align}
	B_n =
	n \left(\frac{s_{1,n} }{s_{1,n}+s_{2,n}} \right)^{\beta (n-1) +  r_{2, n}}\left(\frac{s_{2,n} }{s_{1,n}+s_{2,n}}\right)^{\beta (n-1) + r_{1, n}} \frac{C_n^{p_1, p_2}}{C_{n-1}^{p_1-1, p_2-1}}\label{defBn}
\end{align}
and $Q_{n-1}^{p_1-1, p_2-1}$ represents the joint distribution of $\left(\frac{\left(p_1+p_2\right) \widetilde \lambda_{(1)}-p_1}{\sqrt{n p_1}},\ldots, \frac{\left(p_1+p_2\right) \widetilde \lambda_{(n)}-p_1}{\sqrt{n p_1}} \right)$, where $\left( \widetilde \lambda_{1}, \ldots, \widetilde \lambda_{n-1} \right)$ be the $\beta$-Jacobi ensemble $\mathcal J_{n-1}(p_1-1,p_2-1)$.

Similarly, we will also employ the decomposition $$
	\mathrm{~d}Q_{n-1}^{p_1-1, p_2-1}
=  B_{n-1}  \prod_{i=1}^{n-2}(x_{n-1}-x_i)^{\beta} u_n(x_{n-1}) \mathbf 1_{\left\{x_{n-1}> x_{n-2}\right\}} \mathrm{~d}x_{n-2} \mathrm{~d}Q_{n-2}^{p_1-2, p_2-2}, $$ where
$$ B_{n-1} = 	(n-1) \left(\frac{s_{1,n} }{s_{1,n}+s_{2,n}} \right)^{\beta (n-1) +  r_{2, n}}\left(\frac{s_{2,n} }{s_{1,n}+s_{2,n}}\right)^{\beta (n-1) + r_{1, n}} \frac{C_{n-1}^{p_1-1, p_2-1}}{C_{n-2}^{p_1-2, p_2-2}}.$$

Next, we shift our focus to $X_{(n)}.$
\subsection{Proof of Theorem \ref{maxldp}}

When $0<\gamma<1,$ the large deviations of $X_{(n)}$ could be obtained via that of $p \lambda_{(n)}/p_1,$ which has been established in \cite{MaLD}. Since our method is unified for $0\le \gamma<1,$ we state the proof for all $\gamma\in [0, 1).$   
Observe that the function $J_{\gamma,\sigma}$ is continuous and strictly increasing on $\left( \widetilde u_2, \frac{1}{\sqrt{\gamma}\sigma}\right)$ and $\lim\limits_{x \rightarrow \frac{1}{\sqrt{\gamma}\sigma}} J_{\gamma, \sigma}(x)=+\infty,$ where  $ \widetilde{u}_2 $ is defined in \eqref{defu2}. Hence, $J_{\gamma, \sigma}$ is a good rate function. Therefore, as in \cite{Pak}, we are going to check the following three limits
\begin{itemize}
	\item[(1)] for all $ x \in \left( \widetilde u_2, \frac{1}{\sqrt{\gamma}\sigma}\right),$ it holds that 
		\begin{align}\label{main1} 
			\varlimsup_{n \rightarrow \infty} \frac{1}{\beta n} \log \mathbb{P}\left(X_{(n)} \geq x\right) \leq-J_{\gamma, \sigma}(x); 
		\end{align}
 	\item[(2)] for all $ x \in \left( \widetilde u_2, \frac{1}{\sqrt{\gamma}\sigma}\right),$ it is true that 
 		\begin{align}\label{main2}
 			\lim _{\delta \rightarrow 0} \varliminf_{n \rightarrow \infty} \frac{1}{\beta n} \log \mathbb{P}\left(X_{(n)} \in(x-\delta, x+\delta)\right) \geq- J_{\gamma, \sigma}(x);
 		\end{align}
  	\item[(3)] for all $ x < \widetilde u_2,$ we have that 
  		\begin{align}\label{main3}
			\varlimsup_{n \rightarrow \infty} \frac{1}{\beta n} \log \mathbb{P}\left(X_{(n)}\leq x \right) = -\infty
		 \end{align}
\end{itemize}
for weak large deviation and 
\begin{align}\label{main4}
\lim _{M \to  \frac{1}{\sqrt{\gamma}\sigma}  } \varlimsup_{n \rightarrow \infty} \frac{1}{\beta n} \log \mathbb{P}\left(X_{(n)}>M\right)=-\infty
 \end{align}
for the exponential tightness.

We endow the space $\mathcal M_1(\mathbb R)$ of probability measure on $ \mathbb R $ with the $L^1$-Wasserstein distance $d$ which metricizes the weak convergence. 
Before the verifications, for simplicity,  given $\delta>0,$
set
\begin{align*} 
		B(\delta) = \left\{ \mu \in \mathcal M_1(\mathbb R) \, | \,  d(\mu, \widetilde \nu_{\gamma, \sigma})< \delta \right\} 
\end{align*} 
and
\begin{align*} 
		B_{a, b}(\delta) = \left\{ \mu \in B(\delta) \, | \,   \mathrm{supp}( \mu ) \subset [-a, b]  \right\}
\end{align*} 
for $a, \, b>0.$ 

\subsubsection{\bf The verification of \eqref{main3}.} For all $x<  \widetilde u_2 $, it is easy to have
$$
\mathbb{P}\left(X_{(n)} \leq x\right) \leq \mathbb{P}\left(L_n\left(x, \widetilde u_2\right]=0\right),
$$
where $L_n$ is the empirical measure of $\left(X_i\right)_{1 \leq i \leq n}$. Since $\widetilde u_2$ is the righthand point of the support $\widetilde \nu_{\gamma, \sigma}$, there exists a continuous bounded function $f$ supported on $\left(x, \widetilde u_2\right]$ such that $\int f(x) \widetilde \nu_{\gamma, \sigma}(\mathrm d x)>0$ while $\int f(x) L_n(\mathrm d x)=0$. Therefore, there exists a neighborhood $O \in \mathcal{M}_1(\mathbb{R})$ of $\widetilde \nu_{\gamma, \sigma}$ with respect to the weak topology such that
$$
O \cap\left\{L_n: L_n\left(\left(x, \widetilde u_2\right]\right)=0\right\}=\emptyset.
$$
Here $\mathcal{M}_1(E)$ denote the collection of all probabilities with support on $E$ for $E$ being a Borel subset of $\mathbb{R}$. Therefore,
By the large deviation principle for $L_n$, we have
$$
\varlimsup_{n \rightarrow \infty} \frac{1}{\beta n^2} \log \mathbb{P}\left(L_n \in O^c\right) \leq- \inf _{\mu \in O^c} I_{\gamma, \sigma}(\mu)<0
$$
This immediately implies \eqref{main3}.
\subsubsection{\bf The verification of \eqref{main4}.} 

By using the decomposition in \eqref{orderpdf1}, we know
\begin{align*}
			\mathbb{P}\left(X_{(n)}>M\right) =& \int_{\left\{ (x_1,\ldots,x_n) \in \Delta_n | x_n >M \right\} }  B_n \prod_{i=1}^{n-1}(x_n-x_i)^{\beta} u_n(x_n) \mathrm{~d}x_n \mathrm{~d}Q_{n-1}^{p_1-1,p_2-1} \\
 \leq &  B_n  \int_{M}^{s_{2, n}^{-1}} u_n(x_n) \mathrm{~d}x_n \int_{ \Delta_{n-1} }\prod_{i=1}^{n-1}(x_n-x_1)^{\beta} \mathrm{~d}Q_{n-1}^{p_1-1,p_2-1},
\end{align*}
where, in the second step, we use a simple estimate $\prod_{i=1}^{n-1} (x_n-x_i)^{\beta}\leq (x_n-x_1)^{\beta(n-1)}$ for $(x_1,\ldots,x_n) \in \Delta_n$. Here, $$\Delta_n:=\{(x_1, \cdots, x_n)\in\mathbb{R}^n: -s^{-1}_{1,n} \leq x_1\le x_2\le \cdots\le x_n  \leq s^{-1}_{2,n} \}.$$
It is noteworthy that Lemma \ref{interup} leads to the following upper bound
\begin{align*}
	\int_{ \Delta_{n-1} }\prod_{i=1}^{n-1}(x_n-x_1)^{\beta} \mathrm{~d}Q_{n-1}^{p_1-1,p_2-1}
 \le  e^{O(\beta n)} (x_n^{\beta(n-1)}+1) .
\end{align*}
According to Lemma \ref{I01}, there exists a positive constant $a_1 \in (\tilde u_2, \frac{1}{\sqrt{\gamma}\sigma} ) $ such that for all $M \in \left(a_1, \frac{1}{\sqrt{\gamma}\sigma}\right)$
\begin{align*}
	\varlimsup_{n\rightarrow \infty} \frac{1}{\beta n} \log \left( \int_M^{s_{2, n}^{-1}} (x_n^{\beta(n-1)}+1) u_n(x_n) \mathrm{~d} x_n  \right) \leq \varphi_{\gamma, \sigma}(M),
\end{align*}
 where $\varphi_{\gamma, \sigma}$ defined as in \eqref{vargs}.
Consequently, utilizing Lemma \ref{logBn}, we can deduce that
\begin{align*}
	\lim_{M \to \frac{1}{\sqrt{\gamma}\sigma}  } \varlimsup_{n \to \infty} \frac{1}{\beta n} \log \mathbb{P}\left(X_{(n)}>M\right)\le \lim _{M \to  \frac{1}{\sqrt{\gamma}\sigma}  }\varphi_{\gamma, \sigma}(M)+O(1)=-\infty. 
\end{align*}

Next, we employ a similar approach to handle $\mathbb P\left(X_{(1)}<-L\right)$, yielding
\begin{align*}
	 \mathbb P\left(X_{(1)}<-L\right) 
	\leq & B_n\int_{\left\{ (x_1,\ldots,x_n) \in \Delta_n | x_1 <-L \right\} }  u_n(x_1)   \left(x_n-x_1\right)^{\beta(n-1)} \mathrm{~d} x_1\mathrm{~d} Q_{n-1}^{p_1-1,p_2-1}\\
	\leq  & e^{O(\beta n)} \int_{-s_{1, n}^{-1}}^{-L}(\left|x_1\right|^{\beta(n-1)}+1)u_n(x_1) \mathrm{~d} x_1 .
\end{align*}
It follows again from Lemma \ref{I01} that 
\begin{align*}  
		\lim_{L \to \frac{1}{\sqrt{\gamma}}}  \varlimsup_{n\rightarrow \infty} \frac{1 }{\beta n} \log \mathbb P\left( X_{(1)}<-L\right) \leq \lim_{L \to \frac{1}{\sqrt{\gamma}}}\left( \varphi_{\gamma, \sigma}(-L) +O(1)\right) =-\infty.
\end{align*}

 \subsubsection{\bf The verification of \eqref{main1}.}
 Given $\delta>0.$ It's important to observe that  
\begin{align}\label{xneq1}
\mathbb{P}\left(X_{(n)} \geq x\right) \leqslant &  \mathbb{P}\left(X_{(n)} \in[x, M], X_{(1)}> -L, L_{n-1} \in B_{L, M}(\delta) \right) + \mathbb{P}\left(X_{(n)}>M\right)\\ & + \mathbb{P}\left(X_{(n)} \in[x, M], X_{(1)}> -L, L_{n-1} \notin B_{L, M}(\delta) \right)  + \mathbb{P}\left(X_{(1)}<-L\right)\notag
\end{align}
for $ -\frac{1}{\sqrt{\gamma}} < -L < \widetilde{u}_1$ and $\widetilde{u}_2 < x < M < \frac{1}{\sqrt{\gamma}\sigma},$ where $L_{n-1}=\frac{1}{n-1}\sum_{i=1}^{n-1}\delta_{X_{(i)}}.$  
Here, the terms $\mathbb{P}\left(X_{(n)}>M\right)$ and $\mathbb{P}\left(X_{(1)}<-L\right)$ become negligible as $M\to \frac{1}{\sqrt{\gamma} \sigma}$ and $L \to \frac{1}{\sqrt{\gamma}}$. 

By the decomposition \eqref{orderpdf1}, we have that  
\begin{equation}\label{noting}\aligned & \mathbb  P\left(X_{(n)} \in[x, M], X_{(1)}> -L,  L_{n-1} \notin B_{L, M}(\delta)  \right)\\
=& B_n \int_{x}^M u_n(x_n) \mathrm{~d} x_n \int_{[-L, x_n]^{n-1}}\prod_{i=1}^{n-1}(x_n-x_i)^{\beta}\mathbf 1_{\left\{L_{n-1}\notin B_{L, M}(\delta)\right\}}\mathrm{~d} Q_{n-1}^{p_1-1, p_2-1}.\\	
\endaligned 
\end{equation}
On the condition $-L<x_1\le \cdots \le x_n<M$, one gets that 
$$\prod_{i=1}^{n-1}|x_n-x_i|^{\beta}\le (M+L)^{\beta (n-1)}$$ 
and thanks to the elementary inequality $(1+x)\le e^x,$ we have that 
$$
u_n(x_n)\le \exp\left\{ (r_{1, n}-1)s_{1, n}x_n-(r_{2, n}-1)s_{2, n}x_n\right\} = \exp\left\{ O\left( \frac{\beta n\sqrt{np_1}M}{p_2}\right)\right\} .
$$
Thus, we obtain that 
$$\aligned
	& \log \mathbb P\left(X_{(n)} \in[x, M], X_{(1)}> -L,  L_{n-1} \notin B_{L, M}(\delta)  \right) \le B_n e^{ O( \beta n)} Q_{n-1}^{p_1-1, p_2-1} (L_{n-1} \notin B_{L, M}(\delta) ) .  
	\label{xn1not}
\endaligned $$
The large deviation  result for $L_{n-1}$ (Theorem 1.7 in \cite{Ma}) implies that 
$$ 
\varlimsup_{n \rightarrow \infty} \frac{1}{\beta n^2} \log Q_{n-1}^{p_1-1, p_2-1} (L_{n-1} \notin B_{L, M}(\delta) ) < 0.
$$ 
Lemma \ref{logBn} ensures that $\log B_n=O(\beta n).$ Thereby, it holds that  
\begin{align}\label{xneq2}  
	\varlimsup_{n\to \infty} \frac{1}{\beta n} \log \mathbb P\left(X_{(n)} \in[x, M], X_{(1)}> -L,  L_{n-1} \notin B_{L, M}(\delta)  \right) = -\infty   
\end{align}
for any $\delta>0.$
Hence, it remains to explore the asymptotic of the following probability 
$$ 
\mathbb{P}\left(X_{(n)} \in[x, M], X_{(1)}>-L, L_{n-1} \in B_{L, M}(\delta)\right).
$$ Define $$ g(z, \mu):= \int \log |z-y|\left(\mu(\mathrm{d} y)-\widetilde{\nu}_{\gamma, \sigma}(\mathrm{d} y)\right)$$ and $\varphi_{n-1}(x)=\log u_n(x)/(\beta (n-1)).$  We rewrite the target as 
$$\aligned &\mathbb{P}\left(X_{(n)} \in[x, M], X_{(1)}>-L, L_{n-1} \in B_{L, M}(\delta)\right) \\
	=  & B_n \int_x^M \exp \left\{\beta(n-1)\left(\int \log \left|x_n-y\right| \widetilde \nu_{\gamma, \sigma} (\mathrm{d} y)+	\varphi_{n-1}\left(x_n\right)\right)\right\}\mathrm{~d} x_n  \\
		& \times \int_{\left[-L, x_n\right]^{n-1}} \exp \left\{\beta(n-1) g\left(x_n, L_{n-1}\right) \right\} \mathbf 1_{\{L_{n-1} \in B_{L, M}(\delta) \}} \mathrm{d} Q_{n-1}^{p_1-1, p_2-1}. 
		\endaligned $$
		Observe that 
		\begin{align*} 
		& \int \log \left|x_n-y\right| \widetilde{\nu}_{\gamma, \sigma}(\mathrm{d} y)+\varphi_{n-1}\left(x_n\right) \\
	= & \int \log \left|x-y\right| \widetilde{\nu}_{\gamma, \sigma}(\mathrm{d} y)+\varphi_{n-1}\left(x\right)+ \int_x^{x_n}\left(\int \frac{1}{t-y} \widetilde{\nu}_{\gamma, \sigma}(\mathrm{d} y)+\varphi_{n-1}^{\prime}(t)\right) \mathrm{~d} t.
 \end{align*}
It is easy to check that 
$\lim\limits_{n \to \infty}\varphi_{n-1}^{\prime}(t) =\varphi'_{\gamma, \sigma} (t)$ uniformly on $[x, M].$ 
 Lemma \ref{stieltjes} ensures that
\begin{align*}
& \sup _{t \in[x, x_n]} \left(\int \frac{1}{t-y} \widetilde{\nu}_{\gamma, \sigma}(\mathrm{d} y)+\varphi_{\gamma, \sigma}^{\prime}(t) \right) \\ = & \sup_{t\in [x, x_n]}\frac{ \sqrt{\left(\widetilde{u}_2-t\right)\left(\widetilde{u}_1-t\right)}}{(1+\sqrt{\gamma} \,t)(\sigma \sqrt{\gamma} \,t-1)} \left(\gamma \sigma \mathbf 1_{\left\{ 0<\sigma \gamma < 1 \right\}} + \frac{1+\sigma}{2}\mathbf 1_{\left\{ \sigma \gamma =0 \right\}} \right)  <0 
\end{align*} 
since $\widetilde{u}_2<x<x_n<M<\frac{1}{\sigma \,\sqrt{\gamma}}.$ 

 Therefore, 
\begin{align}\label{xneq3} 
		 & \mathbb{P}\left(X_{(n)} \in[x, M], X_{(1)}>-L, L_{n-1} \in B_{L, M}(\delta)\right) \\
 \leq & B_n \exp \left\{\beta(n-1)\left(\int \log|x-y| \widetilde \nu_{\gamma, \sigma}(\mathrm{d} y)+\varphi_{n-1}(x)+\sup_{\mu \in B_{L, M}(\delta)} \sup_{z \in [x,M]} g(z, \mu)\right)\right\} 
		. \nonumber
\end{align}
Once the following limit  \begin{align}\label{sslog} 
\lim_{\delta \downarrow 0} \sup _{\mu \in B_{L, M}(\delta)} \sup _{z \in[x, M]} g(z, \mu) = 0 
\end{align}
is verified, we see clearly from \eqref{xneq3} and the fact $\lim\limits_{n\to\infty}\varphi_{n-1}=\varphi_{\gamma, \sigma}$ uniformly on $[-L, M]$ that 
\begin{equation}\label{xneq5}\aligned & \varlimsup_{n\to\infty}\frac{1}{\beta n}\log \mathbb{P}\left(X_{(n)} \in[x, M], X_{(1)}>-L, L_{n-1} \in B_{L, M}(\delta)\right) \\
\le & z_{\gamma, \sigma}+\int \log |x-y| \widetilde{\nu}_{\gamma, \sigma}(\mathrm{d} y)+\varphi_{\gamma, \sigma}(x). \endaligned \end{equation} 
The Laplace principle, the limits \eqref{xneq2} and \eqref{xneq5} ensure the desired result \eqref{main1}. 

Next, we are going to check first the limit \eqref{sslog}. Since $\widetilde{\nu}_{\gamma, \sigma}$ is in $B_{L, M}(\delta),$ it is clear that 
$$\lim_{\delta \downarrow 0} \sup _{\mu \in B_{L, M}(\delta)} \sup _{z \in[x, M]} g(z, \mu) \geq 0.$$
We only need to prove the opposite direction. Given $ \eta $ a constant satisfying $ 0<\eta < \left( x -\widetilde u_2 \right)\wedge 1.$ For any $\mu \in B_{L, M}(\delta)$ and $z\in [x, M],$ set $$\aligned \Omega_1 :&= \{y \in \mathrm{supp}( \mu ) \cup \mathrm{supp}( \widetilde \nu_{\gamma, \sigma} )\;| \; \,|z-y|> \eta \}; \\
\Omega_2 :&= \{y \in \mathrm{supp}( \mu ) \cup \mathrm{supp}( \widetilde \nu_{\gamma, \sigma} )\; |\; |z-y|\leq \eta \};\endaligned $$ and set $f_z(y):=\log |z-y|$ for $y \in \Omega_1.$ Note that $ \log |z-y| <0 $ for $y\in \Omega_2$ and $\mathrm{supp}( \widetilde \nu_{\gamma, \sigma} ) \subset \Omega_1.$ It holds for any $y_1,\, y_2\in \Omega_1$ that 
$$\left|\log |z-y_1|-\log|z-y_2|\right|=\left|\log\left(1+\frac{|z-y_1|-|z-y_2|}{|z-y_2|}\right)\right|\le \frac{1}{\eta}|y_1-y_2|$$ 
uniformly on $z\in [x, M].$ 
Thus, the Kantorovich and Rubinstein duality of $L_1$-Wasserstein distance implies that 
$$\int_{\Omega_1} \log \left|z-y\right|\left(\mu(\mathrm{d} y)-\widetilde{\nu}_{\gamma, \sigma}(\mathrm{d} y)\right)\le \eta^{-1} d(\mu, \widetilde{\nu}_{\gamma, \sigma})<\eta^{-1}\delta $$ uniformly on $z\in [x, M].$ 
Hence, 
\begin{align*} 
		& \sup _{z \in[x, M]} \int \log |z-y|\left(\mu(\mathrm{d} y)-\widetilde{\nu}_{\gamma, \sigma}(\mathrm{d} y)\right) \\ 
 \leq & \sup_{z \in[x, M]}\int_{\Omega_1} \log \left|z-y\right|\left(\mu(\mathrm{d} y)-\widetilde{\nu}_{\gamma, \sigma}(\mathrm{d} y)\right) +\sup_{z \in[x, M]}\int_{\Omega_2} \log \left|z-y\right|\mu(\mathrm{d} y) \\
 \leq & \eta^{-1} \delta
\end{align*}
for any $\mu \in B_{L, M}(\delta).$ This finishes the proof of \eqref{sslog}. 

The verification of \eqref{main1}
 is finally completed. 

 \subsubsection{The verification of \eqref{main2}.} Let $2\delta < \left(x-\widetilde{u}_{2}\right) \wedge\left(\frac{1}{\sqrt{\gamma} \sigma}-x\right)$ and fix $r \in (\widetilde{u}_{2}, x-2\delta),$ $-L \in (- \frac{1}{\sqrt{\gamma}},  \widetilde{u}_1).$ Similarly, as for \eqref{xneq3}, it holds that  
 \begin{align}\label{Xmax} & \mathbb{P}\left(X_{(n)} \in(x-\delta, x+\delta)\right) \nonumber\\
 \geq & \,\mathbb{P}\left(X_{(n)} \in(x-\delta, x+\delta), L_{n-1} \in B_{L, r}(\delta) \right) \nonumber  \\
 \geq & 2\delta B_n \exp \left\{ \beta(n-1) \inf_{z \in (x- \delta, x+\delta)}\inf_{\mu \in B_{L, r}(\delta)} \int \log |z-y|\mu(\mathrm{d} y) \right\} \nonumber\\
 & \times \exp\left\{  \beta(n-1) \inf_{z \in (x- \delta, x+\delta)} \varphi_{n-1}(z) \right\}\,Q_{n-1}^{p_1-1, p_2-1}(L_{n-1} \in B_{L, r}(\delta)).
  \end{align}
 The large deviation of $L_{n-1}$ implies that $$Q_{n-1}^{p_1-1, p_2-1}(L_{n-1} \in B_{L, r}(\delta))\to 1, \quad n \to \infty.$$  
 Hence, by Lemma \ref{phix} and Lemma \ref{logBn},
 \begin{align*}& \lim _{\delta \downarrow 0} \varliminf_{n \rightarrow \infty} \frac{1}{\beta n} \log \mathbb{P}\left(X_{(n)} \in(x-\delta, x+\delta)\right) \\
 \geq & z_{\gamma, \sigma} + \lim _{\delta \downarrow 0} \inf _{z \in(x-\delta, x+\delta)} \inf _{\mu \in B_{L, r}(\delta)} \int \log |z-y| \mu(\mathrm{d} y)  +\lim _{\delta \downarrow 0} \varliminf_{n \rightarrow \infty} \inf _{z \in(x-\delta, x+\delta)} \varphi_{n-1}(z) \\
 = & z_{\gamma, \sigma}+\int \log |x-y| \widetilde{\nu}_{\gamma, \sigma}(\mathrm{d} y)+\varphi_{\gamma, \sigma}(x) . \end{align*}
 The last equality comes from the continuity of the mapping $$(z, \mu) \longmapsto \int \log |z-y| \mu(\mathrm{d} y)$$ on $[x- \delta, x+\delta] \times B_{L, r}(\delta) $ and the uniform convergence of $\varphi_n(z)$ to $\varphi_{\gamma, \sigma}(z)$ on $[x- \delta, x+\delta].$



The proof of Theorem \ref{maxldp} is completed now.
\subsection{Proof of Theorem  \ref{minldp}}
	 A similar argument works here as for the proof of Theorem \ref{maxldp}. We need to show that the following three limits
	 
	 \begin{itemize}
	\item[(1)] for all $ x \in \left( -\frac{1}{\sqrt{\gamma}}, \widetilde u_1 \right),$ it holds \begin{align}\label{Xmin1}
 \varlimsup_{n \rightarrow \infty} \frac{1}{\beta n} \log \mathbb{P}\left(X_{(1)} \leq x\right) \leq-I_{\gamma, \sigma}(x)
 \end{align}
 	\item[(2)] for all $ x \in \left( -\frac{1}{\sqrt{\gamma}}, \widetilde u_1 \right),$ it holds \begin{align}\label{Xmin2}\lim _{\delta \rightarrow 0} \varliminf_{n \rightarrow \infty} \frac{1}{\beta n} \log \mathbb{P}\left(X_{(1)} \in(x-\delta, x+\delta)\right) \geq- I_{\gamma, \sigma}(x)		
           \end{align}
  	\item[(3)] for all $ x > \widetilde u_1,$ it holds \begin{align*}
 \varlimsup_{n \rightarrow \infty} \frac{1}{\beta n} \log \mathbb{P}\left(X_{(1)}\geq x \right) = -\infty
 \end{align*}
\end{itemize}
for weak large deviation and \begin{align}\label{main5}
\lim _{L \to \frac{1}{\sqrt{\gamma}}  } \varlimsup_{n \rightarrow \infty} \frac{1}{\beta n} \log \mathbb{P}\left(X_{(1)}< -L\right)=-\infty
 \end{align}
for the exponential tightness, which is nothing but \eqref{main5}.	

The third limit is obtained similarly as \eqref{main3}, via the large deviation for the empirical measure of $\left(X_i\right)_{1 \leq i \leq n}.$ 
Similarly, as for \eqref{xneq1}, we choose $-\frac{1}{\sqrt{\gamma}}<-L<x<\widetilde{u}_1$ and $\widetilde{u}_2<M<\frac{1}{\sigma\sqrt{\gamma}}$ to bound  $\mathbb{P}(X_{(1)}\le x)$ as 
 \begin{align*} 
  \mathbb{P}\left(X_{(1)} \leq x\right) 
&\le \mathbb{P}\left(X_{(n)}\ge M \right)+\mathbb{P}\left(X_{(1)}<-L\right)+\mathbb{P}\left(X_{(1)}\in [-L, x],  X_{(n)}<M , L_{n-1} \notin B_{L, M}(\delta) \right)\\
&\quad +\mathbb{P}\left(X_{(1)}\in [-L, x],  X_{(n)}<M , L_{n-1} \in B_{L, M}(\delta) \right).
\end{align*}
As for \eqref{xneq2}, we are able to have that 
$$\lim_{n\to\infty}\frac{1}{\beta n}\log \mathbb{P}\left(X_{(1)}\in [-L, x],  X_{(n)}<M , L_{n-1} \in B_{L, M}(\delta) \right)=-\infty.$$
Hence, the limit \eqref{main4} and \eqref{main5} and the Laplace principle working together to lead that 
\begin{align*} 
&  \varlimsup_{n \rightarrow \infty} \frac{1}{\beta n} \log \mathbb{P}\left(X_{(1)} \leq x\right)  \\
\le & \lim _{\delta \rightarrow 0}  \varlimsup_{n \rightarrow \infty} \frac{1}{\beta n} \log \mathbb{P}\left(X_{(1)}\in [-L, x],  X_{(n)}<M , L_{n-1} \in B_{L, M}(\delta) \right)  \end{align*}
The same analysis as for \eqref{xneq3} entails that 
\begin{align*} 
& \mathbb{P}\left(X_{(1)}\in [-L, x],  X_{(n)}<M , L_{n-1} \in B_{L, M}(\delta) \right)\\
\leq & B_n\exp \left\{\beta(n-1)\left(\int \log|x-y| \widetilde \nu_{\gamma, \sigma}(\mathrm{d} y)+\varphi_{n-1}(x)+\sup_{\mu \in B_{L, M}(\delta)} \sup_{z \in [-L, x]} g(z, \mu)\right)\right\}. 
\end{align*}
Since $\lim_{n\to\infty}\varphi_{n-1}(x)=\varphi_{\gamma, \sigma}(x)$ uniformly on $[-L, \widetilde{u}_1]$ and similarly $$\lim_{\delta\downarrow 0}\sup_{\mu \in B_{L, M}(\delta)} \sup_{z \in [-L, x]} g(z, \mu)=0,$$ we finally get that 
$$\varlimsup_{n\to\infty}\frac{1}{\beta n}\log \mathbb{P}\left(X_{(1)}\le x \right)\le z_{\gamma, \sigma}+\int \log|x-y| \widetilde \nu_{\gamma, \sigma}(\mathrm{d} y)+\varphi_{\gamma, \sigma}(x).$$
The  limit \eqref{Xmin1} is achieved. 

Next, we work on the last limit \eqref{Xmin2}.
Let $2 \delta<\left(\widetilde{u}_1-x\right) \wedge\left(x+ \frac{1}{\sqrt{\gamma}}\right)$ and fix $- r \in\left(x+2 \delta, \widetilde{u}_1\right)$, $M \in\left( \widetilde{u}_2, \frac{1}{\sqrt{\gamma}\sigma}\right)$. Similar to \eqref{Xmax}, we have
 \begin{align*} & \mathbb{P}\left(X_{(1)} \in(x-\delta, x+\delta)\right) \nonumber\\
  \geq & 2\delta B_n \exp \left\{ \beta(n-1) \inf_{z \in (x- \delta, x+\delta)}\inf_{\mu \in B_{r, M}(\delta)} \int \log |z-y|\mu(\mathrm{d} y) \right\} \nonumber\\
 & \times \exp\left\{  \beta(n-1) \inf_{z \in (x- \delta, \, x+\delta)} \varphi_{n-1}(z) \right\}\,Q_{n-1}^{p_1-1, p_2-1}(L_{n-1} \in B_{r, M}(\delta)).
  \end{align*}
  Therefore, due to the fact $\lim\limits_{n\to\infty}Q_{n-1}^{p_1-1, p_2-1}(L_{n-1} \in B_{r, M}(\delta))=1,$ it is true that 
\begin{align*} 
& \lim _{\delta \downarrow 0} \varliminf_{n \rightarrow \infty} \frac{1}{\beta n} \log \mathbb{P}\left(X_{(1)} \in(x-\delta, x+\delta)\right) \\
\geq & z_{\gamma, \sigma}+\lim _{\delta \downarrow 0} \inf _{z \in(x-\delta, x+\delta)} \inf _{\mu \in B_{r, M}(\delta)} \int \log |z-y| \mu(\mathrm{d} y)+\lim _{\delta \downarrow 0} \varliminf_{n \rightarrow \infty} \inf _{z \in(x-\delta, x+\delta)} \varphi_{n-1}(z)\\
=& z_{\gamma, \sigma}+\int \log |x-y| \widetilde{\mu}_{\gamma, \sigma}(\mathrm{d} y)+\varphi_{\gamma, \sigma}(x),
 \end{align*}
where the last equality is due to the continuity of $$(z, \mu) \longmapsto \int \log |z-y| \mu(\mathrm{d} y)$$ on $[x- \delta, x+\delta] \times B_{r, \,M}(\delta) $ and the fact that $\varphi_n$ converges to $\varphi_{\gamma, \sigma}(x)$ uniformly on $[x- \delta, x+\delta]. $  This closes the whole proof of Theorem \ref{minldp}.

\section{Proofs of Theorems \ref{fnasy} and \ref{thfnasy}}
In this section, we present the proofs of Theorems \ref{fnasy} and \ref{thfnasy}. 

For ease of notations, we use $Q$ and $R$ to denote the  $Q_n^{p_1, p_2}$ and $ R_n^{p_1, p_2},$ respectively. 
Recall the definition of $F_n$ as expressed in \eqref{deff1}.
\subsection{Proof of Theorem \ref{fnasy}} 
In what follows, we use $\mathbb E$ and $\mathbb E^R$ to represent the expectations under the measures $Q$ and $R,$ respectively. Write
$$ \mathbb{E}^R\left[F_n^2\right]  =\mathbb{E}\left[\frac{d Q}{d R}; x<X_{(n)} \right]. $$ Given $-\frac{1}{\sqrt{\gamma}}<-L$ and $\widetilde{u}_2<x<M<\frac{1}{\sqrt{\gamma} \sigma}$ and  $\delta>0,$ 
Recall  $$B_{L, M}(\delta)=\{\mu \in \mathcal{M}_1(R) \mid d\left(\mu, \widetilde{\nu}_{\gamma, \sigma}\right)<\delta, \,\operatorname{supp}(\mu) \subset[-L, M]\}.$$ We still split the target $ \mathbb{E}^R\left[F_n^2\right]$ into four parts as follows 
 \begin{equation}\label{decom}\aligned 
	\mathbb{E}^R\left[F_n^2\right]   
	&=\mathbb{E}\left[\frac{\mathrm{d} Q}{\mathrm{d} R}; x<X_{(n)}<M , X_{(1)}>-L, L_{n-1} \in B_{L, M}(\delta)\right]  \\
	&\quad  +\mathbb{E}\left[\frac{\mathrm{d} Q}{\mathrm{d} R}; X_{(n)}>M \right]	+\mathbb{E}\left[\frac{\mathrm{d} Q}{\mathrm{d} R}; X_{(1)}<-L\right] \\
	&\quad 	+  \mathbb{E}\left[\frac{\mathrm{d} Q}{\mathrm{d} R}; x<X_{(n)}<M , X_{(1)}>-L, L_{n-1} \notin B_{L, M}(\delta)\right]. 
	\endaligned 
 \end{equation} 
We first show that the last three terms in \eqref{decom} are negligible when consider the asymptotic of $\frac1{\beta n}\log \mathbb{E}^R\left[F_n^2\right].$ Precisely, we are going to check that 
\begin{align}
	& \lim _{M \to \frac{1}{\sqrt{\gamma} \sigma}} \varlimsup_{n \rightarrow \infty} \frac{1}{\beta n} \log \mathbb{E}\left[\frac{\mathrm{d} Q}{\mathrm{d} R}; X_{(n)}>M \right] = - \infty;
	\label{qrxm} \\
	& \lim _{L \to \frac{1}{\sqrt{\gamma}}} \varlimsup_{n \rightarrow \infty} \frac{1}{\beta n}\log \mathbb{E}\left[\frac{\mathrm{d} Q}{\mathrm{d} R}; X_{(1)}<-L\right]  = - \infty;
	\label{qrxl} \\
	&  \varlimsup_{n \rightarrow \infty} \frac{1}{\beta n} \log \mathbb{E}\left[\frac{\mathrm{d} Q}{\mathrm{d} R} ; x<X_{(n)}<M, X_{(1)}>-L, L_{n-1} \notin B_{L, M}(\delta)\right] =  - \infty. 
	\label{qrnotb}
\end{align}   
The verifications of the three limits above will be similar to those in the precedent section and this is the reason that we skip some details in the following procedure. 

In fact, using \eqref{deff1} to arrive at 
$$
	\mathbb{E}\left[\frac{\mathrm{d} Q}{\mathrm{d} R}; X_{(n)}>M \right] =B_n\mathbb{E}\left[\prod_{i=1}^{n-1}(X_{(n)}-X_{(i)})^{\beta}u_n(X_{(n)}) h^{-1}(X_{(n)})\mathbf 1_{\left\{X_{(n)}>M\vee X_{(n-1)}\right\}} \right].$$
	Hence, it follows from the decomposition \eqref{orderpdf1} with $m=n$ that 
	\begin{align*}
	\mathbb{E}\left[\frac{\mathrm{d} Q}{\mathrm{d} R}; X_{(n)}>M \right]
	&=B_n^2 \int_{\left\{ (x_1,\ldots,x_n) \in \Delta_n | x_n >M \right\} } \frac{u_n^2(x_n)}{h(x_n)} \prod_{i=1}^{n-1}\left(x_n-x_i\right)^{2\beta} \mathrm{~d} Q_{n-1}^{p_1-1, p_2-1} \\
	&\leq  B_n^2 r^{-1} e^{-rx}  \int_M^{s_{2, n}^{-1}}u_n^2(x_n) e^{rx_n}  \mathrm{~d} x_n \int_{\Delta_{n-1}} \prod_{i=1}^{n-1}\left(x_n-x_1\right)^{2\beta} \mathrm{d} Q_{n-1}^{p_1-1, p_2-1}.
\end{align*}  
 Here,  for the second inequality, we used the elementary inequality $$\prod_{i=1}^{n-1}\left(x_n-x_i\right)^\beta \leq\left(x_n-x_1\right)^{\beta(n-1)}$$ when $x_1<\ldots<x_n$ and the fact that $$ h^{-1}(x_n) = r^{-1} e^{r(x_n- x \vee x_{n-1})} \leq   r^{-1} e^{r(x_n- x)}.$$ The inequality \eqref{upxn1} tells that $$\int_{\Delta_{n-1}} \prod_{i=1}^{n-1}\left(x_n-x_1\right)^{2\beta} \mathrm{d} Q_{n-1}^{p_1-1, p_2-1} \leq e^{O(\beta n)} x_n^{2\beta(n-1)}$$ and the asymptotic of $B_n $ in \eqref{bnlog}  says $B_n = e^{O(\beta n)}.$ 
Thereby, we see that 
$$\mathbb{E}\left[\frac{\mathrm{d} Q}{\mathrm{d} R}; X_{(n)}>M \right]
\leq  e^{O(\beta n)} \int_M^{s_{2, n}^{-1}} x_n^{2\beta(n-1)} u_n^2(x_n) e^{rx_n}  \mathrm{~d} x_n.$$
Lemma \ref{I01} in the Appendix ensures, with $l_n=2\beta(n-1), c_n=r$ and $q=2,$ that 
$$\varlimsup_{n \rightarrow \infty} \frac{1}{\beta n} \log \int_M^{s_{2, n}^{-1}} x_n^{2\beta(n-1)} u_n^2(x_n) e^{rx_n}  \mathrm{~d} x_n \le 2\varphi_{\gamma, \sigma}(M)+ O(1).$$
Consequently, it holds that 
\begin{align*}
	 \lim _{M \to \frac{1}{\sqrt{\gamma} \sigma}} \varlimsup_{n \rightarrow \infty} \frac{1}{\beta n} \log \mathbb{E}\left[\frac{\mathrm{d} Q}{\mathrm{d} R}; X_{(n)}>M \right]  
	 \leq  2\lim _{M \to \frac{1}{\sqrt{\gamma} \sigma}}\varphi_{\gamma, \sigma}(M)+O(1) 
	 =-\infty,
\end{align*}  
where $\varphi_{\gamma, \sigma}$ is defined as in \eqref{vargs}. The limit \eqref{qrxm} follows.

For any $\delta > 0,$ since on the condition $X_{(n)}\in [x, M]$ and $X_{(1)}>-L$, $|X_{(n)}-X_{(i)}|^{\beta}\le (M+L)^{\beta},$
we have the following inequality
\begin{align*}
	& \mathbb{E}\left[\frac{\mathrm{d} Q}{\mathrm{d} R} ; x\le X_{(n)}\le M, X_{(1)}>-L, L_{n-1} \notin B_{L, M}(\delta) \right] \\
	\leq & B_n (M+L)^{\beta(n-1)}\mathbb{E}\left[h^{-1}(X_{(n)})u_n(X_{(n)}); x<X_{(n)}<M, X_{(1)}>-L, L_{n-1} \notin B_{L, M}(\delta) \right]\\
	\leq & B_n^2 (M+L)^{2\beta(n-1)} \int_{x }^M r^{-1} e^{r\left(x_n-x\right)} u^2_n(x_n) \mathrm{~d} x_n \times Q_{n-1}^{p-1, p_2-1}(L_{n-1} \notin B_{L, M}(\delta)) \\
	\leq & e^{O(\beta n)} Q_{n-1}^{p_1-1, p_2-1}\left(L_{n-1} \notin B_{L, M}(\delta)\right).
\end{align*}
The large deviation result for $L_{n-1}$ tells that 
$$
\varlimsup_{n \rightarrow \infty} \frac{1}{\beta n^2} \log Q_{n-1}^{p_1-1, p_2-1}\left(L_{n-1} \notin B_{L, M}(\delta)\right)<0,
$$
which implies that 
$$\varlimsup_{n \rightarrow \infty} \frac{1}{\beta n} \log\mathbb{E}\left[\frac{\mathrm{d} Q}{\mathrm{d} R} ; x<X_{(n)}<M, X_{(1)}>-L, L_{n-1} \notin B_{L, M}(\delta) \right]=-\infty.$$ 

Now we check the second limit \eqref{qrxl}. Indeed, using \eqref{deff1} and \eqref{decom}, we get that 

$$\aligned \mathbb{E}\left[\frac{\mathrm{d} Q}{\mathrm{d} R}; X_{(1)}<-L\right]&=B_n\mathbb{E}\left[\prod_{i=1}^{n-1}(X_{(n)}-X_{(i)})^{\beta}u_n(X_{(n)})h^{-1}(X_{(n)}) \mathbf 1_{\left\{X_{(n)}>x\vee X_{(n-1)}, \; X_{(1)}<-L\right\}}\right] \\
&\le B_n^2 B_{n-1}\int_{-s_{1, n}^{-1}}^{-L} u_n(x_1) \mathrm{~d} x_1 \int_{x}^{s_{2, n}^{-1}} u_n^2(x_n) h^{-1}(x_n)(x_n-x_1)^{2\beta (n-1)} \mathrm{~d} x_n\\
&\le e^{O(\beta n)}\int_{-s_{1, n}^{-1}}^{-L} u_n(x_1) \mathrm{~d} x_1 \int_{x}^{s_{2, n}^{-1}} u_n^2(x_n) (x_n-x_1)^{2\beta (n-1)} \mathrm{~d} x_n.
\endaligned $$
Using the C-R inequality $$(x_n-x_1)^{2\beta(n-1)} \leq 2^{2\beta(n-1)}(|x_n|^{2\beta(n-1)} + |x_1|^{2\beta(n-1)}),$$ we separate 
the integral above as 
$$\aligned &\int_{-s_{1, n}^{-1}}^{-L} u_n(x_1) \mathrm{~d} x_1 \int_{x}^{s_{2, n}^{-1}} u_n^2(x_n) (x_n-x_1)^{2\beta (n-1)} \mathrm{~d}x_n \\
&\le 2^{2\beta(n-1)} \bigg(\int_{-s_{1, n}^{-1}}^{-L} u_n(x_1)|x_1|^{2\beta (n-1)} \mathrm{~d} x_1 \int_{x}^{s_{2, n}^{-1}} u_n^2(x_n)  \mathrm{~d} x_n\\
&\quad+\int_{-s_{1, n}^{-1}}^{-L} u_n(x_1) \mathrm{~d} x_1 \int_{x}^{s_{2, n}^{-1}} u_n^2(x_n) |x_n|^{2\beta (n-1)} \mathrm{~d} x_n\bigg).
\endaligned $$
Eventually, Lemma \ref{I01} helps us to get that 
$$\aligned &\varlimsup_{n\to\infty}\frac{1}{\beta n}\log\mathbb{E}\left[\frac{\mathrm{d} Q}{\mathrm{d} R}; X_{(1)}<-L\right]\le  \varphi_{\gamma, \sigma}(-L)+O(1).
\endaligned $$
Thereby, by the fact $\lim\limits_{L\to \frac{1}{\sqrt{\gamma}}}\varphi_{\gamma, \sigma}(-L)=+\infty, $ we confirm \eqref{qrxl}.

By the Laplace principle and the decomposition \eqref{decom},  after the achievement of \eqref{qrxm}, \eqref{qrxl} and \eqref{qrnotb}, we only need to focus on 
$$ \mathbb{E}\left[\frac{\mathrm{d} Q}{\mathrm{d} R} ; x<X_{(n)}<M, X_{(1)}>-L, L_{n-1} \in B_{L, M}(\delta) \right]. $$
Similarly, as for \eqref{xneq3}, we have
\begin{align}\label{lasta}
	& \mathbb{E}\left[\frac{\mathrm{d} Q}{\mathrm{d} R} ; x<X_{(n)}<M, X_{(1)}>-L, L_{n-1} \in B_{L, M}(\delta) \right] \nonumber\\
	\leq & B_n^2  \int_{x }^M r^{-1} e^{r\left(x_n-x\right)} \exp \left\{2 \beta(n-1) \left( \int \log|x_n-y| \widetilde{\nu}_{\gamma, \sigma}(\mathrm{d} y)   + \varphi_{n-1}\left(x_n\right)\right)\right\} \mathrm{~d} x_n \nonumber\\
	& \times \int_{[-L, M]^{n-1}}  \exp \left\{2 \beta(n-1) g\left(x_n, L_{n-1}\right)\right\} \mathbf{1}_{\left\{L_{n-1} \in B_{L, M}(\delta)\right\}} \mathrm{~d} Q_{n-1}^{p_1-1, p_2-1} \nonumber \\
	\leq & B_n^2 \exp \left\{2 \beta(n-1)\left(\int \log |x-y| \widetilde{\nu}_{\gamma, \sigma}(\mathrm{d} y)+\varphi_{n-1}(x)\right)\right\} \nonumber\\
	& \times \int_x^M \exp \left\{2 \beta(n-1) \int_x^{x_n}\left(\frac{r}{2\beta(n-1)} + \int \frac{1}{t-y} \widetilde{\nu}_{\gamma, \sigma}(\mathrm{d} y)+\varphi_{n-1}^{\prime}(t)\right) \mathrm{~d} t\right\} \mathrm{~d} x_n \nonumber\\
	& \times \exp \left\{2 \beta(n-1) \sup _{\mu \in B_{L, M}(\delta)} \sup _{z \in[x, M]} g(z, \mu)\right\},
\end{align}  
where $g(z, \mu):=\int \log |z-y|\left(\mu(\mathrm{d} y)-\widetilde{\nu}_{\gamma, \sigma}(\mathrm{d} y)\right).$ The limit \eqref{sslog} can set free the last term from the product on the right hand side of \eqref{lasta}.
Recall that $$\int \frac{1}{t-y} \widetilde{\nu}_{\gamma, \sigma}(\mathrm{d} y)+\varphi_{n-1}^{\prime}(t) $$ uniformly converges to $-y_{\gamma, \sigma}^{\prime}(t)$ on $[x, M],$ and note that $J_{\gamma, \sigma}'=y_{\gamma, \sigma}'$ on $[x, M],$ which is increasing on $[x, M].$ 
It follows, 
under the condition $ r < 2 \beta (n-1)J_{\gamma, \sigma}^{\prime}(x), $ that for $n$ large enough
\begin{align*}&\int_x^M \exp \left\{2 \beta(n-1) \int_x^{x_n}\left(\frac{r}{2\beta(n-1)} + \int \frac{1}{t-y} \widetilde{\nu}_{\gamma, \sigma}(\mathrm{d} y)+\varphi_{n-1}^{\prime}(t)\right) \mathrm{~d} t\right\} \mathrm{~d} x_n
\\
\le &\int_x^M \exp \left\{2 \beta(n-1)\int_x^{x_n} (J'_{\gamma, \sigma}(x)-J_{\gamma, \sigma}'(t)) dt\right\} dx_n	\\
\le & M-x.
\end{align*}
Putting these facts into \eqref{lasta}, with the help of Lemmas \ref{logBn} and \ref{phix}, one gets that 
\begin{align*}
	& \lim_{\delta \downarrow 0} \varlimsup_{n \rightarrow \infty} \frac{1}{\beta n} \log \mathbb{E}\left[\frac{\mathrm{d} Q}{\mathrm{d} R} ; x<X_{(n)}<M, X_{(1)}>-L, L_{n-1} \in B_{L, M}(\delta)\right] \\
	\leq & 2z_{\gamma, \sigma}+2\int \log |x-y| \widetilde{\nu}_{\gamma, \sigma}(\mathrm{d} y)+2\varphi_{\gamma, \sigma}(x) \\
	=& -2J_{\gamma, \sigma}(x).
\end{align*}  
Hence, together with the large deviation on $X_{(n)},$ it holds that 
$$
\varlimsup _{n \rightarrow \infty} \frac{\log \mathbb{E}^R\left[F_n^2\right] }{2 \log \mathbb P\left(X_{(n)}> x\right)} \leq 1 .
$$
On the other hand, we know 
$$ 
	\mathbb{E}^R\left[F_n^2\right]
	\geq\left( \mathbb E^R\left[ \frac{\mathrm{d} Q}{\mathrm{d} R} ;  X_{(n)}>x \right]\right)^2
	= \mathbb P\left( X_{(n)}>x \right)^2
$$ 
by Jessen's inequality. The two facts imply the desired conclusion.

\subsection{Proof of Theorem \ref{thfnasy}}
Recall 
$G_n  = \frac{\mathrm d Q_n^{p_1, p_2}}{\mathrm d T_n^{p_1, p_2}}\left(X_{(1)}, \cdots, X_{(n)}\right) \mathbf 1_{\{X_{(1)} < x\} } $ with $-\frac{1}{\sqrt{\gamma}}<x<\widetilde{u}_1.$  For simplicity, we use $Q$ and $T$ to denote   $Q_n^{p_1, p_2}$ and $ T_n^{p_1, p_2},$ respectively. Therefore, the importance sampling estimator $G_n = \frac{\mathrm{d} Q}{\mathrm{d} T} \left(X_{(1)}, \cdots, X_{(n)}\right)\mathbf 1_{\left\{X_{(1)}<  x\right\}} $ can be written as
\begin{equation}\label{defG1} \aligned 
	G_n & = \frac{g_{n}^{p_1, p_2}\left(X_{(1)}, \cdots, X_{(n)}\right)}{g_{n-1}^{p_1-1, p_2-1}\left(X_{(2)}, \cdots, X_{(n)}\right) \hat{h}(X_{(1)})}\mathbf 1_{\left\{X_{(1)}<x\right\}}\\
	& = B_n \prod_{i=2}^{n}(X_{(i)} -X_{(1)})^{\beta} u_n (X_{(1)})\frac{\mathbf 1_{\left\{X_{(1)} < x \wedge X_{(2)} \right\}}}{\hat{h}(X_{(1)})}.   
	\endaligned 
 \end{equation}
 To get the asymptotical efficiency of $G_n,$ i.e., the following limit
$$
\lim _{n \rightarrow \infty} \frac{\log \mathbb{E}^T\left[G_n^2\right] }{2 \log \mathbb P\left(X_{(1)}< x\right)} = 1,
$$
as stated for $F_n,$ we only need to prove that 
\begin{align}\label{suplim}
	\varlimsup_{n\to\infty}\frac{1}{\beta n}\log \mathbb{E}\left[\frac{dQ}{dT}; X_{(1)}<x\right]\le 2z_{\gamma, \sigma}+2\int \log |x-y| \widetilde{\nu}_{\gamma, \sigma}(\mathrm{d} y)+2\varphi_{\gamma, \sigma}(x).
\end{align}
The proof of \eqref{suplim} will be completely similar to the proof related to $F_n$ and then we will skip many details. 
In fact, using \eqref{defG1} to claim that for any $A\in \mathcal{B}(\mathbb{R}^n),$ 
$$\aligned 
	\mathbb{E}\left[\frac{\mathrm{d} Q}{\mathrm{d} T}\, \mathbf 1_{A}\right]& =B_n\mathbb{E}\left[\prod_{i=2}^{n}(X_{(i)}-X_{(1)})^{\beta}u_n(X_{(1)})\hat{h}^{-1}(X_{(1)})\mathbf 1_{A}\right]\\
	&\le r^{-1} e^{r x}B_n\, \mathbb{E}\left[(X_{(n)}-X_{(1)})^{\beta(n-1)}u_n(X_{(1)}) e^{-r X_{(1)}}\mathbf 1_{A}\right].
	\endaligned $$
	We see clearly that in the expression of $\mathbb{E}\left[\frac{\mathrm{d} Q}{\mathrm{d} T}\, \mathbf 1_{\left\{X_{(1)}<x\right\}}\right],$ $X_{(1)}$ and $X_{(n)}$ interchange in that of  
	Given any $-\frac{1}{\sqrt{\gamma}}<-L<x<\widetilde{u}_1$ and 
	$\widetilde{u}_2<M<\frac1{\sigma\sqrt{\gamma}},$ similar arguments as for \eqref{qrxm} and \eqref{qrxl} will lead 
	$$\lim_{L\to \frac{1}{\sqrt{\gamma}}}\varlimsup_{n\to\infty}\frac1{\beta n}\log \mathbb{E}\left[\frac{\mathrm{d} Q}{\mathrm{d} T}; X_{(1)}<-L  \right]=-\infty$$
	and 
	$$\lim_{M\to \frac{1}{\sigma\sqrt{\gamma}}}\varlimsup_{n\to\infty}\frac1{\beta n}\log \mathbb{E}\left[\frac{\mathrm{d} Q}{\mathrm{d} T}; X_{(n)}>M  \right]=-\infty,$$
	respectively. 
	Given $\delta>0,$ taking $$A_1=\left\{x_1\le \cdots\le  x_n: -L\le x_{1}\le x, \, x_{n}\le M, \, L_{n-1}=\frac{1}{n-1}\sum_{i=2}^n \delta_{x_i}\notin B_{L, M}(\delta)\right\},$$
	we know that 
	$$\mathbb{E}\left[\frac{\mathrm{d} Q}{\mathrm{d} T}\,  \mathbf 1_{A_1}\right]\le r^{-1} e^{r (x+L)}B_n |M+L|^{\beta (n-1)} \mathbb{E}[u_n(X_{(1)})\mathbf 1_{A_1}].$$
	Hence, it follows from the decomposition \eqref{orderpdf1} that 
	\begin{align*}
	 \mathbb{E}\left[u_n(X_{(1)})\mathbf 1_{A_1}\right]
	=& B_n \int_{-L}^{x} u_n^2(x_1)\mathrm{~d} x_1 \int  \prod_{i=2}^{n}\left(x_i-x_1\right)^{\beta} \mathbf 1_{A_1}\mathrm{~d} Q_{n-1}^{p_1-1, p_2-1}(x_2, \cdots, x_n) \\
	\leq & B_n |M+L|^{\beta (n-1)} \int_{-L}^{x} u_n^2(x_1) \mathrm{~d} x_1 Q_{n-1}^{p_1-1, p_2-1}(L_{n-1}\notin B_{L, M}(\delta)).
\end{align*}  
The large deviation result for $L_{n-1}$ tells again that 
$$
\varlimsup_{n \rightarrow \infty} \frac{1}{\beta n} \log Q_{n-1}^{p_1-1, p_2-1}\left(L_{n-1} \notin B_{L, M}(\delta)\right)=-\infty,
$$
which implies that 
$$\varlimsup_{n \rightarrow \infty} \frac{1}{\beta n} \log\mathbb{E}\left[\frac{\mathrm{d} Q}{\mathrm{d} T} \, \mathbf 1_{A_1} \right]=-\infty.$$ 
Set 
$$A_2=\left\{x_1\le \cdots\le  x_n: -L\le x_{1}\le x, \, x_{n}\le M, \, L_{n-1}=\frac{1}{n-1}\sum_{i=2}^n \delta_{x_i}\in B_{L, M}(\delta)\right\}.$$
Since on the condition $x_1\le x_2\le \cdots \le x_n,$ we see that $$\{x_{1} \le x\}\subset \{x_{1}<-L\}\cup \{x_{n}>M\}\cup A_1 \cup A_2.$$
Hence, the Laplace principle guarantees that for \eqref{suplim} it suffices to prove that 
\begin{equation}\label{tar1}\aligned \varlimsup_{n\to\infty}\frac{1}{\beta n}\log \mathbb{E}\left[\frac{\mathrm{d} Q}{\mathrm{d} T}\mathbf 1_{A_2}\right]\le 2z_{\gamma, \sigma}+2\int \log |x-y| \widetilde{\nu}_{\gamma, \sigma}(\mathrm{d} y)+2\varphi_{\gamma, \sigma}(x). 
\endaligned \end{equation} 
In fact,   as for \eqref{lasta}, we have  that  
\begin{align*} & \mathbb{E}\left[\frac{\mathrm{d} Q}{\mathrm{d} T}\, \mathbf 1_{A_2}\right]\\
=& B_n ^2\int_{-L}^{x} u_n^2(x_1) \hat{h}^{-1} (x_1)\mathrm{~d} x_1 \int  \prod_{i=2}^{n}\left(x_i-x_1\right)^{2\beta} \mathbf 1_{A_2}\mathrm{~d} Q_{n-1}^{p_1-1, p_2-1}(x_2, \cdots, x_n) \\
	\leq & B_n^2 \exp \left\{2 \beta(n-1)\left(\int \log |x-y| \widetilde{\nu}_{\gamma, \sigma}(\mathrm{d} y)+\varphi_{n-1}(x)\right)\right\} \nonumber\\
	& \times \int_{-L}^x \exp \left\{2 \beta(n-1) \int_{-L}^{x_1} \left(\frac{r}{2\beta(n-1)} + \int \frac{1}{t-y} \widetilde{\nu}_{\gamma, \sigma}(\mathrm{d} y)+\varphi_{n-1}^{\prime}(t)\right) \mathrm{~d} t\right\} \mathrm{~d} x_1.\nonumber\end{align*}  

It follows, 
under the condition $\frac{r}{2 \beta (n-1)} < I_{\gamma, \sigma}^{\prime}(x)  $ and the fact $$\int \frac{1}{t-y} \widetilde{\nu}_{\gamma, \sigma}(\mathrm{d} y)+\varphi_{n-1}^{\prime}(t) \to -I^{\prime}_{\gamma, \sigma}(t)\quad \text{uniformly on } [-L, x],$$ that for $n$ large enough
\begin{align*}&\int_{-L}^x \exp \left\{2 \beta(n-1) \int_{-L}^{x_1}\left(\frac{r}{2\beta(n-1)} + \int \frac{1}{t-y} \widetilde{\nu}_{\gamma, \sigma}(\mathrm{d} y)+\varphi_{n-1}^{\prime}(t)\right) \mathrm{~d} t\right\} \mathrm{~d} x_1
\\
\le &\int_{-L}^x \exp \left\{2 \beta(n-1)\int_{-L}^{x_1} (I'_{\gamma, \sigma}(x)-I_{\gamma, \sigma}'(t)) dt\right\} dx_1	\\
\le & x+L.
\end{align*}	
Here,  for the last inequality we use the property of $I_{\gamma, \sigma}$ that $I'_{\gamma, \sigma}$ is decreasing on $[-L, x].$ 
So, we have that 
\begin{align*}\mathbb{E}\left[\frac{\mathrm{d} Q}{\mathrm{d} T}\, \mathbf 1_{\left\{A_2\right\}}\right]
	\leq B_n^2 \exp \left\{2 \beta(n-1)\left(\int \log |x-y| \widetilde{\nu}_{\gamma, \sigma}(\mathrm{d} y)+\varphi_{n-1}(x)\right)\right\}, \end{align*}  
which, together with Lemmas \ref{phix} and \ref{logBn}, implies that 
\begin{align*} \varlimsup_{n\to\infty}\frac{1}{\beta n}\log \mathbb{E}\left[\frac{\mathrm{d} Q}{\mathrm{d} T}\, \mathbf 1_{A_2}\right]\le 2z_{\gamma, \sigma}+2\int \log |x-y| \widetilde{\nu}_{\gamma, \sigma}(\mathrm{d} y)+2\varphi_{\gamma, \sigma}(x) .\nonumber\end{align*}  
This is the target \eqref{tar1}  and the proof is then completed.

\section{Appendix}
We present first a foundational lemma that will be recurrently used in this paper. Recall the definitions of $s_{i, n}=\sqrt{p_1 n} / p_i$ and $r_{i, n}= \beta\left(p_i-n+1\right)/2$ for $i=1, 2,$ and $u_n(x)=\left(1+s_{1, n} x\right)^{r_{1, n}-1}\left(1-s_{2, n} x\right)^{r_{2, n}-1}.$ 
\begin{lem}\label{phix}
For $x \in \left( - \frac{1}{\sqrt{\gamma}}, \frac{1}{\sqrt{\gamma} \sigma} \right)$ and any constant $a$, define
 For $x \in \left( - \frac{1}{\sqrt{\gamma}}, \frac{1}{\sqrt{\gamma} \sigma} \right)$ and any constant $a$,	define 
$$ \varphi_{n-a} (x) =\frac{r_{1,n}-a}{\beta (n-a)}\log (1+s_{1,n}x)+ \frac{r_{2,n}-a}{\beta (n-a)}\log  (1-s_{2,n}x).$$ 
Under the assumption {\bf A}, it holds that
$$ \lim_{n\rightarrow \infty} \varphi_{n-a} (x) = \varphi_{\gamma, \sigma}(x)$$ 
uniformly on $[-L, M],$
where $\varphi_{\gamma, \sigma}$ defined as in \eqref{vargs}.
\end{lem}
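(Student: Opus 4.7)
The plan is to verify pointwise convergence of $\varphi_{n-a}(x)$ to $\varphi_{\gamma,\sigma}(x)$ for each of the three cases in the piecewise definition, then upgrade to uniform convergence on the compact interval $[-L,M]$ by a standard equicontinuity argument based on uniform bounds on the derivatives. The main inputs are the elementary limits, valid under assumption \textbf{A}, that
$$s_{1,n}=\sqrt{n/p_{1}}\longrightarrow \sqrt{\gamma},\qquad s_{2,n}=s_{1,n}\cdot\frac{p_{1}}{p_{2}}\longrightarrow \sqrt{\gamma}\,\sigma,$$
together with
$$\frac{r_{i,n}-a}{\beta(n-a)}=\frac{p_{i}-n+1}{2(n-a)}-\frac{a}{\beta(n-a)},$$
whose behaviour depends on whether $p_{i}/n$ has a finite limit.

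When $0<\gamma\sigma<1$, both $s_{i,n}$ and the coefficient ratios tend to finite nonzero limits, namely $\frac{r_{1,n}-a}{\beta(n-a)}\to\frac{1-\gamma}{2\gamma}$ and $\frac{r_{2,n}-a}{\beta(n-a)}\to\frac{1-\gamma\sigma}{2\gamma\sigma}$, and the claim follows by direct substitution together with continuity of $\log$. When $\sigma=0$ and $0<\gamma\le 1$, the first summand is handled as before, but for the second I would Taylor expand $\log(1-s_{2,n}x)=-s_{2,n}x+O(s_{2,n}^{2}x^{2})$ since $s_{2,n}\to 0$. The key identity
$$\frac{r_{2,n}-a}{\beta(n-a)}\,s_{2,n}=\frac{p_{2}-n+1}{2(n-a)}\cdot\frac{\sqrt{np_{1}}}{p_{2}}+O(n^{-1})=\frac{\sqrt{np_{1}}}{2(n-a)}\Bigl(1-\frac{n-1}{p_{2}}\Bigr)+O(n^{-1})\longrightarrow \frac{1}{2\sqrt{\gamma}}$$
gives the $-\frac{x}{2\sqrt{\gamma}}$ term, while $\frac{r_{2,n}-a}{\beta(n-a)}s_{2,n}^{2}\to \frac{1}{2\sqrt{\gamma}}\cdot 0=0$ kills the quadratic remainder.

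The case $\gamma=0$ is the delicate one because both $s_{1,n},s_{2,n}\to 0$ while both coefficient ratios $\frac{r_{i,n}-a}{\beta(n-a)}$ blow up, so I expect this to be the main technical obstacle. The strategy is to Taylor expand both logs to second order, $\log(1\pm s_{i,n}x)=\pm s_{i,n}x-\tfrac{1}{2}s_{i,n}^{2}x^{2}+O(s_{i,n}^{3})$, and check the three order-level cancellations: the first-order contribution equals
$$\frac{\sqrt{np_{1}}(n-1)}{2(n-a)}\Bigl(\frac{1}{p_{2}}-\frac{1}{p_{1}}\Bigr)x=\frac{n-1}{2(n-a)}(s_{2,n}-s_{1,n})x \longrightarrow 0$$
since $s_{1,n},s_{2,n}\to 0$; the second-order contribution uses
$$\frac{r_{1,n}}{\beta(n-a)}s_{1,n}^{2}=\frac{n(p_{1}-n+1)}{2(n-a)p_{1}}\longrightarrow \tfrac{1}{2},\qquad \frac{r_{2,n}}{\beta(n-a)}s_{2,n}^{2}=\frac{p_{1}}{p_{2}}\cdot\frac{n(p_{2}-n+1)}{2(n-a)p_{2}}\longrightarrow \tfrac{\sigma}{2},$$
producing exactly $-\frac{1+\sigma}{4}x^{2}$; and the third-order remainder picks up an extra factor of $s_{i,n}\to 0$, hence vanishes.

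Finally, for uniform convergence on $[-L,M]$, I would observe that both $\varphi_{n-a}$ and $\varphi_{\gamma,\sigma}$ are $C^{1}$ on $[-L,M]$, and the derivatives $\varphi_{n-a}'(x)=\frac{r_{1,n}-a}{\beta(n-a)}\cdot\frac{s_{1,n}}{1+s_{1,n}x}-\frac{r_{2,n}-a}{\beta(n-a)}\cdot\frac{s_{2,n}}{1-s_{2,n}x}$ are uniformly bounded in $n$ and $x\in[-L,M]$ (the same case analysis shows that each factor $\frac{r_{i,n}-a}{\beta(n-a)}s_{i,n}$ stays bounded, while the denominators are bounded away from zero on $[-L,M]$). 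Pointwise convergence together with equicontinuity of the family $\{\varphi_{n-a}\}$ on the compact set $[-L,M]$ then upgrades to uniform convergence by a standard Arzel\`a--Ascoli argument, completing the proof.
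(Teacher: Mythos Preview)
The paper states Lemma~\ref{phix} without proof, so there is nothing to compare against; your approach is the natural one and is essentially correct. The case analysis for pointwise convergence is fine, and in fact your Taylor expansion arguments in the degenerate cases already yield estimates that are uniform in $x\in[-L,M]$, since every error term you write down is of the form (convergent scalar)$\,\times\,$(polynomial in $x$). So the separate equicontinuity step is not really needed.

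That said, the equicontinuity paragraph as written contains a false claim. When $\gamma=0$ the individual coefficients
\[
\frac{r_{i,n}-a}{\beta(n-a)}\,s_{i,n}\;\sim\;\frac{1}{2}\sqrt{p_{1}/n}\longrightarrow\infty
\]
are \emph{not} bounded; each of the two summands in $\varphi_{n-a}'(x)$ diverges, and it is only their difference that stays bounded. Concretely,
\[
\varphi_{n-a}'(x)=\frac{A_{1}}{1+s_{1,n}x}-\frac{A_{2}}{1-s_{2,n}x}
=(A_{1}-A_{2})-(A_{1}s_{1,n}+A_{2}s_{2,n})\,x+O\!\left(\tfrac{r_{i,n}}{\beta n}s_{i,n}^{3}\right),
\]
where $A_{i}=\frac{r_{i,n}-a}{\beta(n-a)}s_{i,n}$. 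You have already computed $A_{1}-A_{2}=\frac{n-1}{2(n-a)}(s_{2,n}-s_{1,n})\to 0$ and $A_{i}s_{i,n}\to\tfrac12,\tfrac{\sigma}{2}$, so $\varphi_{n-a}'$ is uniformly bounded on $[-L,M]$ after this cancellation, and even converges uniformly to $\varphi_{\gamma,\sigma}'$. Either fix the sentence to invoke this cancellation, or simply drop the equicontinuity paragraph and observe that the Taylor estimates in the $\gamma=0$ case are already uniform on compacts.
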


Next, we give a lemma that plays a crucial role in the proofs of Theorems \ref{fnasy} and \ref{thfnasy}.  

\begin{lem}\label{I01}

Suppose that the assumption  {\bf A} holds.  Given $d>0.$ For any sequences $\ell_n$ and $c_n$ satisfying $\lim\limits_{n \rightarrow \infty} \frac{\ell_n}{\beta n} = \ell$ and $\varlimsup_{n \rightarrow \infty} \frac{c_n}{\beta n} = c,$ define \begin{align*}
	I_1(x):= \int_{x}^{s_{2, n}^{-1}}  t^{\ell_n} \left(u_n(t)\right)^de^{c_n t}\mathrm{~d}t 
	\quad
	\text{and}
	\quad
	I_2(x):= \int_{x}^{s_{1, n}^{-1}}  t^{\ell_n }u_n(t) e^{c_n t} \mathrm{~d}t.
\end{align*}
We have the following asserstions. 
\begin{itemize}
	\item [(i)] If $ 0 \leq \gamma \sigma <1$ and $1 -2 \sqrt{\gamma}c \mathbf 1_{\{ \sigma =0 \}}>0,$  there exists a positive constant $a_1 \in (\tilde u_2, \frac{1}{\sqrt{\gamma}\sigma} ) $ such that
$$
	\varlimsup_{n\rightarrow \infty} \frac{\log I_1(x)}{\beta n } \leq  \ell \log x + c x + d \varphi_{\gamma, \sigma}(x), \text{ for all }x \in ( a_1, \frac{1}{\sqrt{\gamma}\sigma}).
$$
	\item[(ii)]
If $0 \leq \gamma <1$, there exists a positive constant $a_2$ such that
$$
	\varlimsup_{n\rightarrow \infty} \frac{\log I_2(x)}{\beta n } \leq  \ell \log x + c x + \varphi_{\gamma, \sigma}(-x), \text{ for all } x \in (a_2, \frac{1}{\sqrt{\gamma}}).	
$$ 
\end{itemize}
Here, the function $\varphi_{\gamma, \sigma}$ is defined in \eqref{vargs}.

\end{lem}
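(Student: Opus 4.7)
The plan is to apply a Laplace-type asymptotic analysis to the log-integrand, reducing each integral to the maximum of an explicit limiting function. For $I_1(x)$, I would write the integrand as $\exp\{\beta n\, h_n(t)\}$ with
$$h_n(t) := \frac{\ell_n}{\beta n}\log t + \frac{c_n}{\beta n}\,t + d\cdot \frac{\log u_n(t)}{\beta n}.$$
By the hypotheses on $\ell_n, c_n$ and by Lemma \ref{phix} (using $(n-a)/n \to 1$ to pass from $\varphi_{n-a}$ to $\log u_n/(\beta n)$), the sequence $h_n$ converges uniformly on every compact subinterval of $(-1/\sqrt{\gamma}, 1/(\sqrt{\gamma}\sigma))$ to
$$g(t) := \ell \log t + c\,t + d\,\varphi_{\gamma,\sigma}(t).$$
The task reduces to (a) identifying a right-neighborhood of $1/(\sqrt{\gamma}\sigma)$ on which $g$ is strictly decreasing, and (b) controlling the contribution coming from a vanishing neighborhood of the endpoint $s_{2,n}^{-1}$, where uniform convergence of $h_n$ to $g$ breaks down.

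For the monotonicity, I would compute $g'(t) = \ell/t + c + d\,\varphi_{\gamma,\sigma}'(t)$ from \eqref{vargs}. In the subcase $0 < \gamma\sigma < 1$,
$$\varphi_{\gamma,\sigma}'(t) = \frac{1-\gamma}{2\sqrt{\gamma}(1+\sqrt{\gamma}t)} - \frac{1-\gamma\sigma}{2\sqrt{\gamma}(1-\sqrt{\gamma}\sigma t)},$$
which diverges to $-\infty$ as $t \uparrow 1/(\sqrt{\gamma}\sigma)$, forcing $g'(t) \to -\infty$. In the subcase $\sigma = 0$ (so the right endpoint is $+\infty$), one has $\varphi_{\gamma,\sigma}'(t) \to -1/(2\sqrt{\gamma})$, and the hypothesis $1 - 2\sqrt{\gamma}c > 0$ then guarantees $g'(t) < 0$ for all sufficiently large $t$. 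Either way, there exists $a_1 \in (\widetilde u_2, 1/(\sqrt{\gamma}\sigma))$ on which $g$ is strictly decreasing, so that $\sup_{t \in [x, 1/(\sqrt{\gamma}\sigma))} g(t) = g(x)$ for every $x \in (a_1, 1/(\sqrt{\gamma}\sigma))$.

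To turn monotonicity into a bound on $I_1(x)$, I would split $[x, s_{2,n}^{-1}] = [x, \rho] \cup [\rho, s_{2,n}^{-1}]$ with $\rho \in (x, 1/(\sqrt{\gamma}\sigma))$ arbitrary. On $[x,\rho]$, uniform convergence $h_n \to g$ combined with monotonicity of $g$ gives
$$\int_x^{\rho} \exp\{\beta n\, h_n(t)\}\, dt \leq (\rho - x)\exp\{\beta n(g(x)+o(1))\}.$$
On $[\rho, s_{2,n}^{-1}]$, the change of variables $u = 1-s_{2,n}t$ exposes the vanishing factor $u^{d(r_{2,n}-1)}$, and since $r_{2,n}/(\beta n)$ has a positive (possibly infinite) limit, a direct integration shows that the logarithm of this boundary piece, divided by $\beta n$, is at most $d\,\varphi_{\gamma,\sigma}(\rho) + O(1)$. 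Letting $\rho \downarrow x$ and combining the two pieces yields
$$\limsup_{n\to\infty}\frac{1}{\beta n}\log I_1(x) \leq g(x),$$
which is Part (i). Part (ii) follows by the reflected version of this argument at the left endpoint $-s_{1,n}^{-1}$: the limiting function becomes $\ell \log t + ct + \varphi_{\gamma,\sigma}(-t)$, monotonicity near $1/\sqrt{\gamma}$ is checked symmetrically from $\varphi_{\gamma,\sigma}'$, and the boundary decay at $t = s_{1,n}^{-1}$ is supplied by $(1+s_{1,n}t)^{r_{1,n}-1}$ in place of $(1-s_{2,n}t)^{r_{2,n}-1}$.

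The main obstacle is the boundary handling at the singular endpoint $1/(\sqrt{\gamma}\sigma)$ (resp.\ $1/\sqrt{\gamma}$): because $\varphi_{\gamma,\sigma}$ blows up to $-\infty$ there in the non-degenerate subcase, Lemma \ref{phix}'s uniform convergence is only available on compact subsets strictly inside the natural domain, and the residual piece of the integral must be estimated by hand via the exact algebraic form of $u_n$ together with careful tracking of $r_{1,n}/(\beta n)$ and $r_{2,n}/(\beta n)$ under Assumption \textbf{A}. The three regimes in \eqref{vargs} must then be treated separately, as the asymptotic behavior of $\varphi_{\gamma,\sigma}$ near the relevant endpoint differs qualitatively between them.
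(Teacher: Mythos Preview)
Your Laplace-splitting strategy is workable in principle, but it is genuinely different from the paper's route, and it carries extra baggage that the paper avoids entirely. The paper does \emph{not} split at an intermediate point~$\rho$ and does not treat the singular boundary by a separate estimate. Instead, it sets $\omega_n(t):=-\ell_n\log t - d(r_{1,n}-1)\log(1+s_{1,n}t)-d(r_{2,n}-1)\log(1-s_{2,n}t)-c_nt$, observes that $\omega_n$ is \emph{strictly convex} on $(0,s_{2,n}^{-1})$ for each finite~$n$, and then uses the elementary device
\[
I_1(x)=\int_x^{s_{2,n}^{-1}}e^{-\omega_n(t)}\,dt\le\frac{1}{\omega_n'(x)}\int_x^{s_{2,n}^{-1}}e^{-\omega_n(t)}\omega_n'(t)\,dt\le\frac{e^{-\omega_n(x)}}{\omega_n'(x)}.
\]
Once one checks that $\omega_n'(a_1)>0$ for large $n$ (via pointwise convergence $\omega_n'/(\beta n)\to\kappa_{\gamma,\sigma}$ at the single point $a_1$), convexity propagates this to all $t\ge a_1$, and the integral is done in one stroke with \emph{no} endpoint analysis and \emph{no} case distinction between the three regimes. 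The only cost is computing the explicit $a_1$ in each regime.

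Your approach, by contrast, puts essentially all the work into the piece near the singular endpoint, which you correctly flag as the main obstacle. Two technical slips to watch: first, since the assumption on $c_n$ is only $\varlimsup c_n/(\beta n)=c$, you do not get uniform \emph{convergence} of $h_n$ to $g$ but only an eventual upper bound $h_n\le g+o(1)$---enough for the argument, but not as stated. Second, ``letting $\rho\downarrow x$'' is backwards: you need $\rho$ to approach the right endpoint $1/(\sqrt{\gamma}\sigma)$ (or $+\infty$ when $\sigma=0$) so that $d\,\varphi_{\gamma,\sigma}(\rho)\to-\infty$ makes the tail piece negligible; sending $\rho\downarrow x$ kills the compact piece and leaves you with only the unhelpful bound $d\,\varphi_{\gamma,\sigma}(x)+O(1)$. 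And in the unbounded subcases $\sigma=0$ or $\gamma=0$ the factor $t^{\ell_n}$ is not $O(1)$ on $[\rho,s_{2,n}^{-1}]$, so your claimed boundary bound $d\,\varphi_{\gamma,\sigma}(\rho)+O(1)$ requires more care than indicated. The convexity trick in the paper sidesteps all of this.
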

\begin{proof}
Let $$\omega_n(x):= -\ell_n \log x - d(r_{1,n}-1) \log(1+s_{1,n}x)-d(r_{2,n}-1)\log(1-s_{2,n}x)-c_n x.$$
Under the condition {\bf A}, a direct calculation gives that
\begin{align*}
	\frac{\omega_n^{\prime} (x) }{\beta n}
	 & \longrightarrow 
	 - \frac{\ell+cx}{x} + \frac{d(1+\sigma-2\gamma\sigma)x+d\sqrt{\gamma}(1-\sigma)}{2\left(1+\sqrt{\gamma} x\right)\left(1-\sqrt{\gamma} \sigma x\right)}=: \kappa_{\gamma, \sigma}(x)
\end{align*}
as $n \to \infty.$
Set $ n_1 = 2  \vee  \frac{4 \ell \gamma \sigma +2c \sqrt{\gamma}+1- \gamma }{1-\gamma \sigma}$ and
\begin{align*}
	a_1 
	& = 
	\begin{cases}
		\frac{n_1-1}{n_1}\frac{1}{\sqrt{\gamma}\sigma}, & 0<\sigma \gamma <1; \\
		1 \vee \left(\sqrt{\gamma}(2\ell +2 c-1)+1\right), & \sigma=0,0<\gamma \leq 1;\\
		1 \vee \frac{2(\ell+c)}{1+\sigma}, & \gamma=0, \sigma \geq 0.
	\end{cases}
\end{align*}  
 When $0<\sigma \gamma < 1,$ set $ n_1 = 2  \vee  \frac{4 \ell \gamma \sigma +2c \sqrt{\gamma}+1- \gamma }{1-\gamma \sigma}$ and $a_1= \frac{n_1-1}{n_1}\frac{1}{\sqrt{\gamma}\sigma}.$ Then,
\begin{align*}
	\kappa_{\gamma, \sigma}\left(a_1 \right)
	& \geq -\left( 2 \ell  \sqrt{\gamma}\sigma +c\right) - \frac{1-\gamma}{2\sqrt{\gamma}}+ \frac{n_1(1-\gamma\sigma)}{2\sqrt{\gamma}} \geq 0.
\end{align*}  
 Using the sign-preserving property, we know $\omega_n'(a_1)/(\beta n)>0$ for $n$ large enough.
Observing  the form of $\omega_n,$ we see that 
the function $\omega_n$ is a strictly convex function on $\left(0,  s_{2,n}^{-1}\right)$. Thus, $$ 
	\omega_n^{\prime} (x) = O(\beta n) \quad \text{and}\quad  \omega_n^{\prime} (x) > 0 \text{ for all }x \in  \left(a_1, \frac{1}{\sqrt{\gamma}\sigma}\right)
$$ 
for $n$ large enough. 
Thereby,  for any $x \in  \left(a_1, \frac{1}{\sqrt{\gamma}\sigma}\right)$,
\begin{align*}
	\log I_1(x) & \leq \log  \left(\int_{x}^{s_{2, n}^{-1}} e^{-\omega_n(t)} \omega_n^{\prime}(t) \mathrm{~d}t\right) - \log \left(\inf_{t \in [x, s_{2,n}^{-1}] } \omega_n^{\prime}(t)\right) \\
	& = -  \omega_n (x)-\log(\omega_n'(x)). 
\end{align*}  
Thus, 
$$\varlimsup_{n\rightarrow \infty} \frac{\log I_1(x)}{\beta n } \leq  \varlimsup_{n\rightarrow \infty} \frac{-\omega_n(x)}{\beta n }.$$ 
Taking $a=1$ in Lemma \ref{phix} and using the conditions $\lim\limits_{n \rightarrow \infty} \frac{\ell_n}{\beta n} = \ell$ and $\varlimsup_{n \rightarrow \infty} \frac{c_n}{\beta n} = c,$ , we get $$\varlimsup_{n\rightarrow \infty} \frac{\log I_1(x)}{\beta n } \leq  \ell \log x + c x + d \varphi_{\gamma, \sigma}(x) $$ for all $x\in (a_1, )$
 
 The proof of (ii) will be similar.  Let $$\widetilde \omega_{n}(x):= -\ell_n \log x - (r_{1,n}-1) \log(1-s_{1,n}x)-(r_{2,n}-1)\log(1+s_{2,n}x)-c_n x.$$ Then,
 \begin{align*}
 	\frac{\widetilde \omega_n^{\prime}(x)}{\beta n} \longrightarrow
 		- \frac{(\ell+cx)\left( (1+\sigma-2 \gamma \sigma) x- \sqrt{\gamma}(1+\sigma)\right)}{2x(1-\sqrt{\gamma} x)(1+\sqrt{\gamma} \sigma x)}=: \widetilde \kappa_{\gamma, \sigma}(x).
 \end{align*}
Set $n_2 = 2 \vee  \frac{4 \ell \gamma +2c \sqrt{\gamma}+1- \gamma \sigma }{1-\gamma}$ and
\begin{align*}
	a_2 = 
	\begin{cases}
		 \frac{n_2-1}{n_2}\frac{1}{\sqrt{\gamma}} , & 0 < \gamma < 1, 0 \leq \sigma \gamma \leq 1;\\  
		1 \vee \frac{2(\ell+c)}{1+\sigma} , & \gamma=0, \sigma \geq0.
	\end{cases}
\end{align*}  
Similar arguments tell $ \widetilde \kappa_{\gamma, \sigma}(a_2) > 0$ and  
\begin{align*}
	\widetilde \omega_n^{\prime}(x)
	=
	O(\beta n) \quad \text{and} \quad  \widetilde \omega_n^{\prime}(x)>0 \quad   \text { for all } x \in\left(a_2, \frac{1}{\sqrt{\gamma}}\right)
\end{align*}  
 for $n$ large enough. 
Thereby,  for any $x \in  \left(a_2, \frac{1}{\sqrt{\gamma}}\right)$,
\begin{align*}
	\log I_2(x) & \leq \log  \left(\int_{x}^{s_{1,n}^{-1}} e^{-\widetilde \omega_n(t)} \widetilde \omega_n^{\prime}(t) \mathrm{~d}t\right) - \log \left(\inf_{t \in [x, s_{1,n}^{-1}] } \widetilde \omega_n^{\prime}(t)\right) \\
	& = -  \widetilde \omega_n (x)-\log(\widetilde \omega_n^{\prime}(x)). 
\end{align*}  
Applying Lemma \ref{phix} with $a=1$,  we get the conclusion of (ii). 
\end{proof}

Next, we state a lemma on the normalizing constant $B_n$ defined in \eqref{defBn}.
\begin{lem}\label{logBn}
$B_n$ is defined as in \eqref{defBn}, it holds
\begin{align}\label{bnlog}
	\log B_n  = - \frac{\beta n}{2} \log \frac{p_2}{p} +\frac{\beta(p-n)}{2} \log \frac{p-1}{p-n}+ O( \beta \log n ). 
\end{align}
Under the assumption {\bf A}, we have
\begin{align*}  
	z_{\gamma,\sigma}:=
	&  \lim_{n \to \infty} \frac{\log B_n}{\beta n} = \frac{1}{2} \log(1+\sigma )+   \frac{1+\sigma-\gamma\sigma}{2\gamma\sigma} \log \left( 1+ \frac{\gamma\sigma}{1+\sigma-\gamma\sigma}\right). 
\end{align*}
Here $z_{0,\sigma} = \lim\limits_{\gamma \to 0}z_{\gamma,\sigma} =   \frac{1}{2} \log(1+\sigma ) + \frac{1}{2} $ and $z_{\gamma, 0} = \lim\limits_{\sigma \to 0}z_{\gamma,\sigma} = \frac{1}{2}. $
\end{lem}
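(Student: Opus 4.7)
The strategy has three steps: (i) reduce the ratio $C_n^{p_1,p_2}/C_{n-1}^{p_1-1,p_2-1}$ to a short combination of Gamma functions by telescoping; (ii) apply Stirling's formula and regroup; and (iii) take the limit under assumption {\bf A}.

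For step (i), observe that the index shift $(p_1,p_2,n)\mapsto(p_1-1,p_2-1,n-1)$ leaves $p_i-n+j$ unchanged but decreases $p-n+j$ by one. Hence in the product formula for $C_n^{p_1,p_2}$, the factors $\Gamma(\beta(p_i-n+j)/2)$ telescope for $j=1,\ldots,n-1$ leaving only $\Gamma(\beta p_i/2)$ in the denominator; the factors $\Gamma(1+\beta j/2)$ collapse to a single $\Gamma(1+\beta n/2)$ in the denominator; $\Gamma(1+\beta/2)$ survives once in the numerator; and the $\Gamma(\beta(p-n+j)/2)$ factors produce a net contribution $\Gamma(\beta(p-1)/2)\Gamma(\beta p/2)/\Gamma(\beta(p-n)/2)$ in the numerator. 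Combined with the prefactor, using $s_{1,n}/(s_{1,n}+s_{2,n}) = p_2/p$ and $\beta(n-1) + r_{i,n} = \beta(p_i+n-1)/2$, one obtains the explicit formula
\begin{equation*}
B_n = n\Bigl(\frac{p_2}{p}\Bigr)^{\beta(p_2+n-1)/2}\Bigl(\frac{p_1}{p}\Bigr)^{\beta(p_1+n-1)/2}\frac{\Gamma(1+\beta/2)\,\Gamma(\beta(p-1)/2)\,\Gamma(\beta p/2)}{\Gamma(\beta p_1/2)\,\Gamma(\beta p_2/2)\,\Gamma(\beta(p-n)/2)\,\Gamma(1+\beta n/2)}.
\end{equation*}

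For step (ii), I apply $\log\Gamma(x) = (x-1/2)\log x - x + \tfrac12\log(2\pi) + O(1/x)$ to each of the six Gamma functions. Splitting each $\tfrac{\beta\alpha}{2}\log(\beta\alpha/2)$ as $\tfrac{\beta\alpha}{2}\log(\beta/2) + \tfrac{\beta\alpha}{2}\log\alpha$, the combined coefficient of $\log(\beta/2)$ and the combined linear-in-$\alpha$ piece both reduce to $\tfrac{\beta}{2}[p+(p-1)-(p-n)-n-p_1-p_2] = -\tfrac{\beta}{2}$, a key cancellation that follows from $p=p_1+p_2$. These pieces therefore contribute $O(\beta\log n)$ under {\bf A} (noting $\log\beta = O(\log(\beta n)) = O(\log n)$ when $\log n/(\beta n)\to 0$), and the $-\tfrac12\log x$ remainders together with the $\tfrac12\log(2\pi)$ constants are $O(\log n)$. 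The remaining pieces $\tfrac{\beta\alpha}{2}\log\alpha$ for $\alpha\in\{p,p-1,p-n,n,p_1,p_2\}$, added to the prefactor logarithms $\frac{\beta(p_i+n-1)}{2}\log(p_i/p)$ for $i=1,2$, can be regrouped (invoking $p=p_1+p_2$ once more to merge the $\log p_1$ and $\log p_2$ contributions with the $\log p$ contribution) into the displayed form
\begin{equation*}
\log B_n = -\frac{\beta n}{2}\log\frac{p_2}{p} + \frac{\beta(p-n)}{2}\log\frac{p-1}{p-n} + O(\beta\log n).
\end{equation*}

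For step (iii), I divide by $\beta n$ and use {\bf A}: since $p_2/p\to 1/(1+\sigma)$ the first term tends to $\tfrac12\log(1+\sigma)$, while $(p-n)/(2n)\to(1+\sigma-\gamma\sigma)/(2\gamma\sigma)$ and $(p-1)/(p-n) = 1+(n-1)/(p-n)\to 1+\gamma\sigma/(1+\sigma-\gamma\sigma)$ make the second term tend to $\tfrac{1+\sigma-\gamma\sigma}{2\gamma\sigma}\log\bigl(1+\tfrac{\gamma\sigma}{1+\sigma-\gamma\sigma}\bigr)$, their sum being $z_{\gamma,\sigma}$. The degenerate cases $\gamma\to 0$ or $\sigma\to 0$ follow from $\log(1+x)\sim x$: the apparent $1/(\gamma\sigma)$ singularity cancels and leaves the constant $\tfrac12$, recovering the stated formulas for $z_{0,\sigma}$ and $z_{\gamma,0}$. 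The main obstacle is the bookkeeping in step (ii): each Stirling-expanded Gamma term generates pieces of order $\beta n\log n$, and only the algebraic identity $p=p_1+p_2$ (together with the specific exponents $\beta(p_i+n-1)/2$ in the prefactor) ensures that these large contributions collapse to the two compact groupings $\log(p_2/p)$ and $\log((p-1)/(p-n))$ up to the claimed $O(\beta\log n)$ error.
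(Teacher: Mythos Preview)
Your approach matches the paper's: telescope the product to obtain the closed Gamma ratio, apply Stirling, regroup, take limits. Step~(i) and step~(iii) are correct as written.

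There is, however, a genuine slip in step~(ii). You apply Stirling to ``six'' Gamma functions, but the closed formula for $B_n$ contains seven: you have silently dropped $\Gamma(1+\beta/2)$. Your residual $-\tfrac{\beta}{2}\log(\beta/2)$ is then left uncanceled, and your justification that this is $O(\beta\log n)$ --- via ``$\log\beta = O(\log(\beta n)) = O(\log n)$ when $\log n/(\beta n)\to 0$'' --- is false: assumption~{\bf A} only asserts $\log n/(\beta n)\to 0$, which places no upper bound on $\log\beta$ in terms of $\log n$ (take $\beta=e^n$, for which $\log\beta=n$ while $\log n/(\beta n)\to 0$). The repair is to restore $\log\Gamma(1+\beta/2)$: for $\beta$ large, Stirling gives $\log\Gamma(1+\beta/2)=\tfrac{\beta}{2}\log\tfrac{\beta}{2} - \tfrac{\beta}{2} + O(\log\beta)$, whose leading piece cancels your residual $-\tfrac{\beta}{2}\log\tfrac{\beta}{2}$ exactly, and the surviving $-\tfrac{\beta}{2}$ cancels against the linear piece (which is $+\tfrac{\beta}{2}$, not $-\tfrac{\beta}{2}$ as you wrote --- the $-x$ in Stirling flips the sign). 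What remains is $O(\log\beta)\subset O(\beta)\subset O(\beta\log n)$, and the argument goes through. The paper handles this point by grouping $\log\Gamma(1+\tfrac{\beta}{2}) - \tfrac{\beta}{2}\log\tfrac{\beta}{2}$ as a single term and bounding it directly.
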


\begin{proof}
	By definition,
$$
 \frac{C_n^{p_1, p_2}}{C_{n-1}^{p_1-1, p_2-1}}=\frac{\Gamma\left(1+\frac{\beta}{2}\right) \Gamma\left(\frac{\beta p}{2}\right) \Gamma\left(\frac{\beta(p-1)}{2}\right)}{\Gamma\left(1+\frac{\beta n}{2}\right) \Gamma\left(\frac{\beta p_1}{2}\right) \Gamma\left(\frac{\beta p_2}{2}\right) \Gamma\left(\frac{\beta(p-n)}{2}\right)}.
$$ Thus,
\begin{align*} 
	\log  \frac{C_n^{p_1, p_2}}{C_{n-1}^{p_1-1, p_2-1}}
	= &\log \Gamma\left(1+\frac{\beta}{2}\right)+\log \Gamma\left(\frac{\beta p}{2}\right)+\log \Gamma\left(\frac{\beta(p-1)}{2}\right) \\
	& -\log \Gamma\left(1+\frac{\beta n}{2}\right)-\log \Gamma\left(\frac{\beta p_1}{2}\right)-\log \Gamma\left(\frac{\beta p_2}{2}\right)-\log \Gamma\left(\frac{\beta(p-n)}{2}\right).
\end{align*}
Recall the Stirling formula $$
\log \Gamma(x)=\left(x-\frac{1}{2}\right) \log x-x+O(1)
$$
for x large enough. It implies that
\begin{align*} 
	\log  \frac{C_n^{p_1, p_2}}{C_{n-1}^{p_1-1, p_2 -1}}= 
	&\log \Gamma\left(1+\frac{\beta}{2}\right)-\frac{\beta}{2} \log \frac{\beta}{2}+  \frac{\beta p_1}{2} \log \frac{p}{p_1} +\frac{\beta p_2}{2} \log \frac{p}{p_2} \\ 
	& \quad +  \left(\frac{\beta (p-n)}{2}-\frac{1}{2}\right) \log \frac{p-1}{p-n} +\frac{\beta(n-1)}{2} \log \frac{p}{n} \\
	& + \frac 12 \log \frac{p_1}{2} - \frac{\beta+1}{2}\log \frac{n}{2} +O(1).
\end{align*}
Since $\log \Gamma\left(1+\frac{\beta}{2}\right)-\frac{\beta}{2} \log \frac{\beta}{2}=o(\beta)$ for $\beta$ large enough and is bounded for finite $\beta$, 
\begin{align*}
	\log B_n 
	= &  \frac{\beta(n-1)+1}{2}\log \frac{n}{p_1} + \log \frac{p_1}{p} \left(\frac{\beta(p_1+n-1)}{2} - \frac{\beta p_1}{2} \right) \\
	& + \log \frac{p_2}{p} \left(\frac{\beta(p_2-n+1)}{2} - \frac{\beta p_2}{2} \right) +\left(\frac{\beta(p-n)}{2}-\frac{1}{2}\right) \log \frac{p-1}{p-n} \\ 
	& +\frac{\beta(n-1)}{2} \l og \frac{p}{n}+\frac{1}{2} \log \frac{p_1}{2} + O( \beta \log n )  \\
	= & - \frac{\beta n}{2} \log \frac{p_2}{p} +\frac{\beta(p-n)}{2} \log \frac{p-1}{p-n}+ O( \beta \log n ).
\end{align*}
The conclusion is obtained by taking the limit.
\end{proof}

The following Lemma is important when one deals with the tail probability like 
$\mathbb{P}(X_{(n)}>x).$

\begin{lem}\label{interup} Let $(\lambda_1, \cdots, \lambda_{n-1})$ be the $\beta$-Jabobi ensembles and $Q_{n-1}^{p_1-1, p_2-1}$ be the joint distribution of $(X_{(1)}, \cdots, X_{(n-1)}).$ Given any $x_n\in (-s_{1,n}^{-1}, s_{2,n}^{-1}),$ we have 
\begin{align}\label{upxn1}
	\int_{ \Delta_{n-1}}\prod_{i=1}^{n-1}(x_n-x_1)^{\beta} \mathbf 1_{\left\{x_{n}>x_{n-1}\right\}}\mathrm{~d}Q_{n-1}^{p_1-1,p_2-1} 
	\leq e^{O(\beta n)} (|x_n|^{\beta(n-1)}+1).
 \end{align}
 Here $\Delta_{n-1}:=\{(x_1, \cdots, x_{n-1})\in\mathbb{R}^n: -s^{-1}_{1,n} \leq x_1\le x_2\le \cdots\le x_{n-1}  \leq s^{-1}_{2,n} \}.$

 \end{lem}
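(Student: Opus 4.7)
\noindent\textit{Proof plan.} The plan is to bound the integrand pointwise and then exploit the fact that $Q_{n-1}^{p_1-1,p_2-1}$ is a probability measure. On $\Delta_{n-1}\cap\{x_n>x_{n-1}\}$ the ordering forces $x_1\geq -s_{1,n}^{-1}$ and $x_n>x_1$, so $0<x_n-x_1\leq x_n+s_{1,n}^{-1}$. Reading $\prod_{i=1}^{n-1}(x_n-x_1)^\beta$ as $(x_n-x_1)^{\beta(n-1)}$ (exactly the upper bound used in the proof of \eqref{main4}), the integrand is pointwise dominated by $(x_n+s_{1,n}^{-1})^{\beta(n-1)}$, and integrating against $Q_{n-1}^{p_1-1,p_2-1}$ bounds the left-hand side of the lemma by the same quantity.

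To extract the quoted form, I apply the $C_r$-inequality $(a+b)^p\leq 2^p(a^p+b^p)$ with $p=\beta(n-1)$, $a=|x_n|$, $b=s_{1,n}^{-1}$, giving
\[
(x_n+s_{1,n}^{-1})^{\beta(n-1)}\leq 2^{\beta(n-1)}\bigl(|x_n|^{\beta(n-1)}+s_{1,n}^{-\beta(n-1)}\bigr).
\]
Since $s_{1,n}^{-1}=\sqrt{p_1/n}$, Assumption {\bf A} yields $\log s_{1,n}^{-1}=\tfrac12\log(p_1/n)\to\tfrac12\log(1/\gamma)$, which is $O(1)$ as long as $\gamma>0$. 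Hence $s_{1,n}^{-\beta(n-1)}=e^{\beta(n-1)\log s_{1,n}^{-1}}=e^{O(\beta n)}$, and combined with $2^{\beta(n-1)}=e^{O(\beta n)}$ this delivers $e^{O(\beta n)}(|x_n|^{\beta(n-1)}+1)$, exactly the target.

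The only real obstacle is the boundary regime $\gamma=0$, where $s_{1,n}^{-1}\to\infty$ and the crude factor $s_{1,n}^{-\beta(n-1)}$ is no longer automatically $e^{O(\beta n)}$. In that regime the pointwise bound above is too wasteful, and the argument must be replaced by an expectation bound: start instead from $(x_n-x_1)^{\beta(n-1)}\leq 2^{\beta(n-1)}(|x_n|^{\beta(n-1)}+|x_1|^{\beta(n-1)})$, so that the remaining task is to show $\mathbb E^{Q_{n-1}^{p_1-1,p_2-1}}[|X_{(1)}|^{\beta(n-1)}]=e^{O(\beta n)}$. This in turn follows by splitting on $\{|X_{(1)}|\leq K_0\}$ (contributing $K_0^{\beta(n-1)}=e^{O(\beta n)}$ for any fixed $K_0$) versus its complement, where the worst-case bound $|X_{(1)}|^{\beta(n-1)}\leq s_{1,n}^{-\beta(n-1)}$ is beaten by the exponentially small mass that $Q_{n-1}^{p_1-1,p_2-1}$ places on the boundary; the latter is controlled by the weight $u_n$, which vanishes like $(1+s_{1,n}x_1)^{r_{1,n}-1}$ near $x_1=-s_{1,n}^{-1}$, so that $\int_{-s_{1,n}^{-1}}^{0}|x_1|^{\beta(n-1)}u_n(x_1)\,\mathrm{d}x_1$ is within reach of Lemma \ref{I01}(ii) after the substitution $y=-x_1$.
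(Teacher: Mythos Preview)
Your shortcut for the case $\gamma>0$ is correct and cleaner than what the paper does: bounding $x_n-x_1\le x_n+s_{1,n}^{-1}$ and integrating against the probability measure $Q_{n-1}^{p_1-1,p_2-1}$ immediately gives the claim, since $s_{1,n}^{-1}\to\gamma^{-1/2}$ forces $s_{1,n}^{-\beta(n-1)}=e^{O(\beta n)}$. The paper does not take this route; it runs the full argument uniformly in $\gamma$.

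For $\gamma=0$ your overall strategy coincides with the paper's: apply the $C_r$-inequality to split off $|x_n|^{\beta(n-1)}$ and reduce to controlling the $Q_{n-1}^{p_1-1,p_2-1}$-moment of $|X_{(1)}|^{\beta(n-1)}$, then feed a one-dimensional integral into Lemma~\ref{I01}. But two steps in your sketch do not go through as written. First, the ``split at $K_0$ and use the worst-case bound $|X_{(1)}|^{\beta(n-1)}\le s_{1,n}^{-\beta(n-1)}$'' cannot succeed: when $\gamma=0$ one has $\log s_{1,n}^{-\beta(n-1)}=\tfrac{\beta(n-1)}{2}\log(p_1/n)$, which grows faster than any $c\,\beta n$, while the tail mass $Q_{n-1}^{p_1-1,p_2-1}(X_{(1)}<-K_0)$ is only $e^{-O(\beta n)}$ for fixed $K_0$; the product is not $e^{O(\beta n)}$. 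Second, and more substantively, the integral you write down, $\int_{-s_{1,n}^{-1}}^{0}|x_1|^{\beta(n-1)}u_n(x_1)\,\mathrm{d}x_1$, is not the marginal density of $X_{(1)}$: decomposing $\mathrm d Q_{n-1}^{p_1-1,p_2-1}$ as in \eqref{orderpdf1} produces the additional Vandermonde factor $\prod_{i=2}^{n-1}(x_i-x_1)^\beta$, and you have not said how to control it. The paper's device is to \emph{retain} the indicator $\mathbf 1_{\{x_{n-1}<x_n\}}$ through the first $C_r$-step, so that on the domain one has $x_i-x_1\le x_n-x_1$ and hence $\prod_{i=2}^{n-1}(x_i-x_1)^\beta\le (x_n-x_1)^{\beta(n-2)}$; a second application of $C_r$ then reduces everything to integrals of the form $\int |x_1|^{\ell_n}u_n(x_1)\,\mathrm d x_1$ with $\ell_n=O(\beta n)$, which are exactly what Lemma~\ref{I01} handles (after splitting the range at fixed cut points $-a_0,\,a_1$). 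Once you insert this Vandermonde bound, your argument and the paper's become the same.
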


\begin{proof}
The C-R inequality tells us that  $$(x_n-x_1)^{\beta(n-1)} \leq 2^{\beta(n-1)}(|x_n|^{\beta(n-1)} + |x_1|^{\beta(n-1)}).$$
Hence, 
\begin{align*} 
	\int_{\Delta_{n-1} }\prod_{i=1}^{n-1}(x_n-x_i)^{\beta} \mathbf 1_{\left\{x_{n}>x_{n-1}\right\}}\mathrm{~d}Q_{n-1}^{p_1-1,p_2-1}  
	 \leq  2^{\beta(n-1)}
	 \left( |x_n|^{\beta(n-1)} + \int_{\Delta_{n-1} }|x_1|^{\beta(n-1)} \mathrm{~d}Q_{n-1}^{p_1-1,p_2-1} \right).
 \end{align*}
Using the decomposition $$\mathrm{d}Q_{n-1}^{p_1-1, p_2-1}
		=  B_{n-1} \prod_{i=2}^{n-1}(x_i-x_1)^{\beta} u_{n}(x_1) \mathbf 1_{\left\{x_2> x_{1}\right\}} \mathrm{~d}x_1 \mathrm{~d}Q_{n-2}^{p_1-2, p_2-2}
$$ and the inequality $0<x_i-x_1\le x_n-x_1,$ and the C-R inequality again, 
we have that 
\begin{align*}
	\int_{\Omega_{n-1}}|x_1|^{\beta(n-1)} \mathrm{~d}Q_{n-1}^{p_1-1,p_2-1}
	\leq  &B_{n-1}   \int_{-s_{1,n}^{-1}}^{s_{2,n}^{-1}}  |x_1|^{\beta(n-1)} \left(x_n-x_1\right)^{\beta(n-1)}  u_n(x_1)\mathrm{~d}x_1  \\
	\leq  & 2^{\beta(n-1)}B_{n-1} \int_{-s_{1,n}^{-1}}^{s_{2,n}^{-1}} \left(|x_n|^{\beta(n-1)}+ |x_1|^{\beta(n-1)} \right) |x_1|^{\beta(n-1)} u_n(x_1) \mathrm{~d}x_1. 
 \end{align*}
By Lemma \ref{I01}, for any $\ell_n = O(\beta n)$, we can find bounded constants $a_0$ and $a_1$ such that   
 \begin{align*} 
	\int_{-s_{1,n}^{-1}}^{s_{2,n}^{-1}}  |x_1|^{\ell_n} u_n(x_1) \mathrm{~d}x_1 
	&=\left(\int_{-s_{1,n}^{-1}}^{-a_0}+\int_{-a_0}^{a_1}+ \int_{a_1}^{s_{2,n}^{-1}} \right)  |x_1|^{\ell_n} u_n(x_1) \mathrm{~d}x_1\\
	&=:I_1+I_2+I_3.\end{align*} 
	Lemma \ref{I01} guarantees that $I_1+I_3=e^{O(\beta n)}$ and so is $I_2$ since the integrated function is continuous with order $e^{O(\beta n)}$ and the interval is bounded.
  Using the approximation of $B_n$ in \eqref{bnlog}, we finally accomplish the proof of \eqref{upxn1}. 	
\end{proof}


Stieltjes transforms (also called Cauchy transforms in the literature) of functions of bounded variation are another important tool in random matrix theory. If $G$ is a function of bounded variation on the real line, then its Stieltjes transform is defined by
$$
S_G(z)=\int \frac{1}{\lambda-z}  \mathrm{~d} G(\lambda), \quad z \in \{z \in \mathbb{C}: \Im z > 0\}.
$$

\begin{lem}\label{stieltjes} Recall the density function of $\widetilde{\nu}_{\gamma, \sigma}$ is defined in \eqref{dennu}. 
Let $$ I(z):= \mathbf 1_{D}(z)-\mathbf 1_{E}(z),$$ where  $D:= \{\Im z >0 \}  \cup \{ \Im z = 0, \Re z >\widetilde u_2\}$ and $E:= \{\Im z <0 \}  \cup \{ \Im z = 0, \Re z <\widetilde u_1\}.$ The Stieltjes transform $\widetilde{S}_{\gamma, \sigma}$ of the probability  $\widetilde{\nu}_{\gamma, \sigma}$ satisfies for any $z\in\mathbb{C},$ 
\begin{align*} 
	\widetilde{S}_{\gamma, \sigma}(z)  
	=	\begin{cases}
				\frac{\sqrt{\gamma}(1-\sigma)-z(1+\sigma-2\gamma\sigma)}{2(1+\sqrt{\gamma}z)(1-\sqrt{\gamma}\sigma z )} +\frac{\gamma \sigma \sqrt{ (\widetilde u_2 -z)(\widetilde u_1-z)}}{(1+\sqrt{\gamma} z)(1-\sigma \sqrt{\gamma} z)} \; I(z), 
				& 0<\sigma \gamma \leq 1;\\ 
				\frac{\sqrt{\gamma}-z}{2(1+\sqrt{\gamma}z)}+ \frac{\sqrt{(\gamma-z)^2-4}}{2(1+\sqrt{\gamma} z)}\; I(z), 
				& \sigma=0,0<\gamma \leq 1; \\ 
				-\frac{(1+\sigma) z}{2}+\frac{1+\sigma}{2} \sqrt{z^2-\frac{4}{1+\sigma}}\; I(z) ,   
				& \gamma=0, \sigma \geq 0.
 	 \end{cases}  
 \end{align*} 
 \end{lem}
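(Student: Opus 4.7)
The density formula~\eqref{dennu} realizes $\widetilde{\nu}_{\gamma,\sigma}$ in each regime as an affine pushforward of a classical law: the semicircle $c_2$ when $\gamma=0$, the Marchenko--Pastur law $\mu_\gamma$ when $\sigma=0$, and the Wachter law $\nu_{\gamma,\sigma}$ when $0<\sigma\gamma<1$. The plan is to combine this structural observation with the pushforward identity for Stieltjes transforms,
\begin{equation*}
  S_{aY+b}(z) \;=\; a^{-1}\,S_Y\!\bigl((z-b)/a\bigr),
\end{equation*}
to reduce each case of the claim to the known Stieltjes transform of one of the three classical laws.

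\textbf{Case-by-case execution.} For $\gamma=0$, substitute $w=\sqrt{1+\sigma}\,z$ into the standard semicircle transform $S_{c_2}(w)=-w/2+\tfrac12\sqrt{w^2-4}\,I(w)$; multiplying by $\sqrt{1+\sigma}$ yields the third line of the claim. For $\sigma=0$ with $0<\gamma\le 1$, substitute $w=1+\sqrt\gamma\,z$ into the Marchenko--Pastur transform $S_{\mu_\gamma}(w)=\frac{1-\gamma-w+\sqrt{(w-\gamma_1)(w-\gamma_2)}\,I(w)}{2\gamma w}$, and use the direct identity $\gamma_j=1+\sqrt\gamma\,\widetilde u_j$ ($j=1,2$), which follows from \eqref{defu1}--\eqref{defu2} at $\sigma=0$, to convert the radical into $\sqrt\gamma\sqrt{(\widetilde u_2-z)(\widetilde u_1-z)}$; grouping the Jacobian with the denominator $2\gamma(1+\sqrt\gamma z)$ produces the second line. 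For $0<\sigma\gamma<1$, apply the same scheme with the substitution $w=\tfrac{\sigma}{1+\sigma}(1+\sqrt\gamma z)$ to the Stieltjes transform of $\nu_{\gamma,\sigma}$ (characterized by the quadratic $\sigma\gamma\,w(1-w)S^2+((1+\sigma)w-\sigma)S+1=0$ with the same branch convention) and the analogous identity $u_j=\tfrac{\sigma}{1+\sigma}(1+\sqrt\gamma\widetilde u_j)$ linking the Wachter endpoints to $\widetilde u_1,\widetilde u_2$ to obtain $\frac{\sigma\sqrt\gamma}{1+\sigma}\sqrt{(\widetilde u_2-z)(\widetilde u_1-z)}$ under the radical. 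After combining with the Jacobian $\frac{\sigma\sqrt\gamma}{1+\sigma}$ and simplifying over the common denominator $2(1+\sqrt\gamma z)(1-\sqrt\gamma\sigma z)$, the first line of the claim emerges.

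\textbf{Alternative self-contained verification.} To avoid quoting the Wachter transform as a black box, one may prove the claim directly by showing that the candidate right-hand side $\widehat S(z)$ satisfies the three characterizing properties of a Stieltjes transform of a probability measure supported on $[\widetilde u_1,\widetilde u_2]$: (i)~analyticity on $\mathbb C\setminus[\widetilde u_1,\widetilde u_2]$; (ii)~the asymptotic $\widehat S(z)\sim -1/z$ as $|z|\to\infty$; and (iii)~the Plemelj--Sokhotski identity $\lim_{\varepsilon\downarrow 0}\Im\widehat S(x+i\varepsilon)=-\pi\widetilde h_{\gamma,\sigma}(x)$ for $x\in(\widetilde u_1,\widetilde u_2)$. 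Property (iii) is immediate once the branch $I(z)$ is in hand since on the support $(\widetilde u_2-x)(x-\widetilde u_1)>0$ and the prefactor $\gamma\sigma/[(1+\sqrt\gamma x)(1-\sqrt\gamma\sigma x)]$ exactly matches the coefficient in~\eqref{dennu}; property (ii) is a Taylor expansion of the square root at infinity that kills the leading polynomial term.

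\textbf{Main obstacle.} The principal difficulty is establishing property (i), i.e.\ checking the removability of the apparent poles of the rational part at $z=-1/\sqrt\gamma$ and $z=1/(\sqrt\gamma\sigma)$: these are cancelled only if the radical $\sqrt{(\widetilde u_2-z)(\widetilde u_1-z)}$, evaluated with the branch dictated by $I(z)$, produces the exact matching residues. This requires the algebraic evaluation of $(\widetilde u_2-z)(\widetilde u_1-z)$ at the two pole locations using $\widetilde u_1+\widetilde u_2=\tfrac{2(1-\sigma)\sqrt\gamma}{1+\sigma}$ and $\widetilde u_1\widetilde u_2=\gamma-\tfrac{4}{1+\sigma}$ (computed from~\eqref{defu1}--\eqref{defu2}), together with a careful tracking of the sign of the square root on the sets $D$ and $E$, which is the unique branch making $\widetilde S_{\gamma,\sigma}$ map the upper half-plane into the lower half-plane. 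The cases $\sigma=0$ and $\gamma=0$ are then recovered as limits, which also serves as a consistency check for the branch conventions.
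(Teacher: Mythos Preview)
Your primary approach---recognizing $\widetilde\nu_{\gamma,\sigma}$ as an affine pushforward of one of the three classical laws and then invoking the identity $S_{(Y-b)/a}(z)=a\,S_Y(az+b)$---is exactly what the paper does; the paper ends with the same three substitutions and the same final display. The only substantive difference is in how the branch factor $I(z)$ is justified: you quote the classical transforms with the sign factor already attached (treating it as part of the ``known'' formula), whereas the paper cites the references only for $\Im z>0$ and then carries out a contour-integral / residue computation (writing the semicircle integral as $\oint_{|\zeta|=1}$ after the substitution $\zeta=e^{iy}$) to determine the sign on each real half-line. Your ``alternative self-contained verification'' via the three characterizing properties (analyticity off the support, $-1/z$ asymptotics, Plemelj--Sokhotski) together with the residue cancellation at $-1/\sqrt\gamma$ and $1/(\sqrt\gamma\sigma)$ is not in the paper; it is a legitimate and arguably cleaner route, since it sidesteps the need to look up or rederive the Wachter Stieltjes transform, at the cost of the pole-cancellation algebra you flag as the main obstacle.
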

\begin{proof}
	We first state the case $\Im z > 0.$ From \cite{DuEd} and \cite{Gam},  we know that the Stieltjes transforms of the semicircular law, the Marchenko-Pastur law and the Wachter law are as follows:
\begin{align*}
	S_{c_2}(z)= \int_{-2}^{2} \frac{1}{x-z} c_2(x) \mathrm{~d}x= - \frac{z}{2}+\frac{\sqrt{z^2-4}}{2}
\end{align*} and 
\begin{align*}
	S_{\gamma}(z)=\int_{\gamma_1}^{\gamma_2} \frac{1}{x-z} h_r(x) \mathrm{~d}x=\frac{1-\gamma-z}{2 \gamma z}+\frac{\sqrt{(1+\gamma-z)^2-4 \gamma}}{2 \gamma z} 
\end{align*} 
and 
\begin{align*}
	S_{\gamma \sigma}(z) 
	=\int_{u_1}^{u_2} \frac{1}{x-z} h_{\gamma, \sigma}(x) \mathrm{~d}x 
	=\frac{1-\gamma}{2 \gamma z }-\frac{1-\gamma \sigma}{2 \gamma \sigma} \frac{1}{1-z}+\frac{1+\sigma}{\sigma} \frac{\sqrt{\left(u_2-z\right)(u_1-z)}}{z(1-z)} .
\end{align*}

 For other cases, we can use the residue theorem to evaluate the integral. Next, we evaluate the conclusion in semicircular. Letting $z=2  \cos y$ and $\zeta=e^{i y}$, then
$$S_{c_2}(z)=-\frac{1}{4 i \pi} \oint_{|\zeta|=1} \frac{\left(\zeta^2-1\right)^2}{\zeta^2\left(\zeta^2-z \zeta +1\right)} \mathrm{~d}\zeta.$$ Note that the integrand has three poles, at $\zeta_0=0, \zeta_1= \frac{z+\sqrt{z^2-4 }}{2} $, and $\zeta_2=\frac{z-\sqrt{z^2-4 }}{2} .$ By simple calculation, we find that the residues at these three poles are
$$
z \text { and } \sqrt{z^2-4 } \text { and } =-\sqrt{z^2-4 }.
$$
It is clear that
$$\begin{cases}
\left|\zeta_0\right|<1,\left|\zeta_1\right|>1,\left|\zeta_2\right|<1, & z \in  D_1;\\
\left|\zeta_0\right|<1,\left|\zeta_1\right|< 1,\left|\zeta_2\right|>1, & z \in  E_1;\\
\left|\zeta_0\right|<1,\left|\zeta_1\right|\leq 1,\left|\zeta_2\right|\leq1, & z \in  \mathbb C \setminus  D_1 \setminus  E_1 ;\\
\end{cases} $$
where $ D_1 = \{\Im z>0\} \cup\left\{\Im z=0, \Re z > 2\right\} $ and $ E_1 = \{\Im z<0\} \cup\left\{\Im z=0, \Re z < -2\right\}, \; \forall z \in \mathbb C.$
Therefore by Cauchy integration, we obtain 
\begin{align*}
	S_{c_2}(z) 
	= \int_{-2}^{2} \frac{1}{x-z} c_2(x) \mathrm{~d}x 
	= - \frac{z}{2}+\frac{\sqrt{z^2-4}}{2}\left(\mathbf{1}_{D_1}(z)-\mathbf{1}_{E_1}(z)\right).
\end{align*}
Similarly,  a direct calculation gives that the integral corresponding to Marchenko-Pastur law is
\begin{align*}
	S_{\gamma}(z)
	=\frac{1-\gamma-z}{2 \gamma z}+\frac{\sqrt{(1+\gamma-z)^2-4 \gamma}}{2 \gamma z} \left(\mathbf{1}_{D_2}(z)-\mathbf{1}_{E_2}(z)\right),
\end{align*}
where $ D_2 = \{\Im z>0\} \cup\left\{\Im z=0, \Re z > \gamma_2\right\} $ and $ E_2 = \{\Im z<0\} \cup\left\{\Im z=0, \Re z < \gamma_1 \right\}.$ The integral corresponding to Wachter law is
\begin{align*}
	S_{\gamma \sigma}(z)
	=\frac{1-\gamma}{2 \gamma z }-\frac{1-\gamma \sigma}{2 \gamma \sigma} \frac{1}{1-z}+\frac{1+\sigma}{\sigma} \frac{\sqrt{\left(u_2-z\right)(u_1-z)}}{z(1-z)}\left(\mathbf{1}_{D_3}(z)-\mathbf{1}_{E_3}(z)\right)
\end{align*} 
where $ D_3 = \{\Im z>0\} \cup\left\{\Im z=0, \Re z > u_2 \right\} $ and $ E_3 = \{\Im z<0\} \cup\left\{\Im z=0, \Re z < u_1 \right\}.$
Thus, we have
\begin{align*}
	\widetilde{S}_{\gamma, \sigma}(z) 
	=
		\begin{cases}
			\frac{\sigma \sqrt{\gamma}}{1+\sigma} S_{\gamma, \sigma}\left(\frac{\sigma}{1+\sigma}(1+\sqrt{\gamma} z)\right), 
			& 0<\sigma \gamma \leq 1; \\
			\sqrt{\gamma} S_\gamma(1+\sqrt{\gamma} z), 
			& \sigma=0,0<\gamma \leq 1; \\ 
			\sqrt{1+\sigma}S_{c_2}(\sqrt{1+\sigma}z ),
			& \gamma=0, \sigma \geq 0. 
		\end{cases}	
\end{align*}

Then the proof follows from basic calculations.
\end{proof}

\end{document}